\providecommand{\U}[1]{\protect \rule{.1in}{.1in}}
\newtheorem{theorem}{Theorem}[section]
\theoremstyle{plain}
\newtheorem{corollary}{Corollary}[section]
\newtheorem{lemma}{Lemma}[section]
\newtheorem{remark}{Remark}[section]
\numberwithin{equation}{section}
\begin{document}
\title[Hamiltonian system for the elliptic form of Painlev\'{e} VI]{Hamiltonian system for the elliptic form of Painlev\'{e} VI equation}
\author{Zhijie Chen}
\address{Center for Advanced Study in Theoretical Sciences (CASTS), National Taiwan
University, Taipei 10617, Taiwan }
\email{chenzhijie1987@sina.com}
\author{Ting-Jung Kuo}
\address{Taida Institute for Mathematical Sciences (TIMS), National Taiwan University,
Taipei 10617, Taiwan }
\email{tjkuo1215@gmail.com}
\author{Chang-Shou Lin}
\address{Taida Institute for Mathematical Sciences (TIMS), Center for Advanced Study in
Theoretical Sciences (CASTS), National Taiwan University, Taipei 10617, Taiwan }
\email{cslin@math.ntu.edu.tw}

\begin{abstract}
In literature, it is known that any solution of Painlev\'{e} VI equation
governs the isomonodromic deformation of a second order linear Fuchsian ODE on
$\mathbb{CP}^{1}$. In this paper, we extend this isomonodromy theory on
$\mathbb{CP}^{1}$ to the moduli space of elliptic curves by studying the
isomonodromic deformation of the generalized Lam\'{e} equation. Among other
things, we prove that the isomonodromic equation is a new Hamiltonian system,
which is equivalent to the elliptic form of Painlev\'{e} VI equation for
generic parameters. For Painlev\'{e} VI equation with some special parameters,
the isomonodromy theory of the generalized Lam\'{e} equation greatly
simplifies the computation of the monodromy group in $\mathbb{CP}^{1}$. This
is one of the advantages of the elliptic form.

\end{abstract}
\maketitle

\section{Introduction}

The isomonodromic deformation plays an universal role to connect many
different research areas of mathematics and physics. Our purpose of this paper
is to develop an isomonodromy theory for the generalized Lam\'{e} equation on
the moduli space of elliptic curves.

\subsection{Painlev\'{e} VI in elliptic form}

Historically, the discovery of Painlev\'{e} equations was originated from the
research on complex ODEs from the middle of 19th century up to early 20th
century, led by many famous mathematicians including Painlev\'{e} and his
school. The aim is to classify those nonlinear ODEs whose solutions have the
so-called \textit{Painlev\'{e} property}. We refer the reader to
\cite{Babich-Bordag,DIKZ,Dubrovin-Mazzocco,Guzzetti,Halphen,Hit1,Hit2,Inaba-Iwasaki-Saito,GP,Kawai,Lisovyy-Tykhyy,Y.Manin,Okamoto2,Okamoto1,Okamoto,Poole,Sasaki}
and references therein for some historic account and the recent developments.
Painlev\'{e} VI with four free parameters $(\alpha,\beta,\gamma,\delta)$ can
be written as%
\begin{align}
\frac{d^{2}\lambda}{dt^{2}}= &  \frac{1}{2}\left(  \frac{1}{\lambda}+\frac
{1}{\lambda-1}+\frac{1}{\lambda-t}\right)  \left(  \frac{d\lambda}{dt}\right)
^{2}-\left(  \frac{1}{t}+\frac{1}{t-1}+\frac{1}{\lambda-t}\right)
\frac{d\lambda}{dt}\label{46}\\
&  +\frac{\lambda \left(  \lambda-1\right)  \left(  \lambda-t\right)  }%
{t^{2}\left(  t-1\right)  ^{2}}\left[  \alpha+\beta \frac{t}{\lambda^{2}%
}+\gamma \frac{t-1}{\left(  \lambda-1\right)  ^{2}}+\delta \frac{t\left(
t-1\right)  }{\left(  \lambda-t\right)  ^{2}}\right]  .\nonumber
\end{align}
In the literature, it is well-known that Painlev\'{e} VI (\ref{46}) is closely
related to the isomonodromic deformation of either a $2\times2$ linear ODE
system of first order (under the non-resonant condition, the isomonodromic
equation is known as the Schlesinger system; see \cite{Jimbo-Miwa}) or a
second order Fuchsian ODE (under the non-resonant condition, the isomonodromic
equation is a Hamiltonian system; see \cite{Fuchs,Okamoto2}). This associated
second order Fuchsian ODE is defined on $\mathbb{CP}^{1}$ and has five regular
singular points $0,1,t,\lambda(t)$ and $\infty$. Among them, $\lambda(t)$ (as
a solution of Painlev\'{e} VI) is an apparent singularity. This isomonodromy
theory on $\mathbb{CP}^{1}$ was first discovered by R. Fuchs \cite{Fuchs}, and
later generalized to the $n$-dimensional Garnier system by K. Okamoto
\cite{Okamoto2}. We will briefly review this classical isomonodromy theory in
Section 4.

Throughout the paper, we use the notations $\omega_{0}=0,\omega_{1}%
=1,\omega_{2}=\tau$, $\omega_{3}=1+\tau$, $\Lambda_{\tau}=\mathbb{Z+\tau Z}$,
and $E_{\tau}\doteqdot \mathbb{C}/\Lambda_{\tau}$ where $\tau \in \mathbb{H}%
=\left \{  \tau|\operatorname{Im}\tau>0\right \}  $ (the upper half plane). We
also define $E_{\tau}\left[  2\right]  \doteqdot \left \{  \frac{\omega_{i}}%
{2}|i=0,1,2,3\right \}  $ to be the set of 2-torsion points in the flat torus
$E_{\tau}$. From the Painlev\'{e} property of (\ref{46}), any solution
$\lambda \left(  t\right)  $ is a multi-valued meromorphic function in
$\mathbb{C}\backslash \left \{  0,1\right \}  $. To avoid the multi-valueness of
$\lambda \left(  t\right)  $, it is better to lift solutions of (\ref{46}) to
its universal covering. It is known that the universal covering of
$\mathbb{C}\backslash \left \{  0,1\right \}  $ is $\mathbb{H}$. Then $t$ and the
solution $\lambda \left(  t\right)  $ can be lifted to $\tau$ and $p\left(
\tau \right)  $ respectively through the covering map by%
\begin{equation}
t\left(  \tau \right)  =\frac{e_{3}(\tau)-e_{1}(\tau)}{e_{2}(\tau)-e_{1}(\tau
)}\text{ and }\lambda(t)=\frac{\wp(p(\tau)|\tau)-e_{1}(\tau)}{e_{2}%
(\tau)-e_{1}(\tau)}, \label{tr}%
\end{equation}
where $\wp \left(  z|\tau \right)  $ is the Weierstrass elliptic function
defined by%
\[
\wp \left(  z|\tau \right)  =\frac{1}{z^{2}}+\sum_{\omega \in \Lambda_{\tau
}\backslash \left \{  0\right \}  }\left[  \frac{1}{\left(  z-\omega \right)
^{2}}-\frac{1}{\omega^{2}}\right]  ,
\]
and $e_{i}=\wp \left(  \frac{\omega_{i}}{2}|\tau \right)  $, $i=1,2,3$. Then
$p\left(  \tau \right)  $ satisfies the following elliptic form%
\begin{equation}
\frac{d^{2}p\left(  \tau \right)  }{d\tau^{2}}=\frac{-1}{4\pi^{2}}\sum
_{i=0}^{3}\alpha_{i}\wp^{\prime}\left(  p\left(  \tau \right)  +\frac
{\omega_{i}}{2}|\tau \right)  , \label{124}%
\end{equation}
where $\wp^{\prime}\left(  z|\tau \right)  =\frac{d}{dz}\wp \left(
z|\tau \right)  $ and
\begin{equation}
\left(  \alpha_{0},\alpha_{1},\alpha_{2},\alpha_{3}\right)  =\left(
\alpha,-\beta,\gamma,\frac{1}{2}-\delta \right)  . \label{126}%
\end{equation}
This elliptic form was already known to Painlev\'{e} \cite{Painleve}. For a
modern proof, see \cite{Babich-Bordag, Y.Manin}.

The advantage of (\ref{124}) is that $\wp(p(\tau)|\tau)$ is single-valued for
$\tau \in \mathbb{H}$, although $p(\tau)$ has a branch point at those $\tau_{0}$
such that $p(\tau_{0})\in E_{\tau_{0}}[2]$ (see e.g. (\ref{asymp}) below). We
take $(\alpha_{0},\alpha_{1},\alpha_{2},\alpha_{3})=(\frac{1}{8},\frac{1}%
{8},\frac{1}{8},\frac{1}{8})$ for an example to explain it. Painlev\'{e} VI
with this special parameter has connections with some geometric problems; see
\cite{Chen-Kuo-Lin, Hit1}. In the seminal work \cite{Hit1}, N. Hitchin
discovered that, for a pair of complex numbers $(r,s)\in \mathbb{C}%
^{2}\backslash \frac{1}{2}\mathbb{Z}^{2}$, $p(\tau)$ defined by the following
formula:%
\begin{equation}
\wp \left(  p(\tau)|\tau \right)  =\wp \left(  r+s\tau|\tau \right)  +\frac
{\wp^{\prime}\left(  r+s\tau|\tau \right)  }{2\left(  \zeta \left(  r+s\tau
|\tau \right)  -r\eta_{1}(\tau)-s\eta_{2}(\tau)\right)  }, \label{513}%
\end{equation}
is a solution to (\ref{124}) with $\alpha_{k}=\frac{1}{8}$ for all $k$. Here
$\zeta \left(  z|\tau \right)  \doteqdot-\int^{z}\wp(\xi|\tau)d\xi$ is the
Weierstrass zeta function and has the quasi-periods%
\begin{equation}
\zeta \left(  z+1|\tau \right)  =\zeta \left(  z|\tau \right)  +\eta_{1}%
(\tau)\text{ and }\zeta \left(  z+\tau|\tau \right)  =\zeta \left(
z|\tau \right)  +\eta_{2}(\tau). \label{40-2}%
\end{equation}
{By (\ref{513}), Hitchin could construct an Einstein metric with positive
curvature if }$r\in \mathbb{R}$ and $s\in i\mathbb{R}$, and an Einstein metric
with negative curvature if $r\in i\mathbb{R}$ and $s\in \mathbb{R}$. It follows
from (\ref{513}) that $\wp \left(  p(\tau)|\tau \right)  $ is a single-valued
meromorphic function in $\mathbb{H}$. However, each $\tau_{0}$ with
$p(\tau_{0})\in E_{\tau_{0}}[2]$ is a branch point of order $2$ for $p(\tau)$.

Motivated from Hitchin's solutions, we would like to extend the beautiful
formula (\ref{513}) to Painlev\'{e} VI with other parameters. But it is not a
simple matter because it invloves complicated derivatives with respect to the
moduli parameter $\tau$. For example, for Hichin's solutions, it seems not
easy to derive (\ref{124}) with $\alpha_{k}=\frac{1}{8}$ for all $k$ directly
from the formula (\ref{513}). We want to provide a systematical way to study
this problem. To this goal, the first step is to develop a theory in the
moduli space of tori which is analogous to the Fuchs-Okamoto theory on
$\mathbb{CP}^{1}$. The purpose of this paper is to derive the Hamiltonian
system for the elliptic form (\ref{124}) by developing such an isomonodromy
theory in the moduli space of tori. The key issue is \textit{what the linear
Fuchsian equation in tori is such that its isomonodromic deformation is
related to the elliptic form} (\ref{124}).

\subsection{Generalized Lam\'{e} equation}

Motivated from our study of the surprising connection of the mean field
equation and the elliptic form (\ref{124}) of Painlev\'{e} VI in
\cite{Chen-Kuo-Lin}, our choice of the Fuchsian equation is the generalized
Lam\'{e} equation defined by (\ref{505}) below. More precisely, let us
consider the following mean field equation%
\begin{equation}
\Delta u+e^{u}=8\pi \sum_{i=0}^{3}n_{k}\delta_{\frac{\omega_{k}}{2}}%
+4\pi \left(  \delta_{p}+\delta_{-p}\right)  \text{ in }E_{\tau},\label{501}%
\end{equation}
where $n_{k}>-1$, $\delta_{\pm p}$ and $\delta_{\frac{\omega_{k}}{2}}$ are the
Dirac measure at $\pm p$ and $\frac{\omega_{k}}{2}$ respectively. By the
Liouville theorem, any solution $u$ to equation (\ref{501}) could be written
into the following form:%
\begin{equation}
u(z)=\log \frac{8|f^{\prime}\left(  z\right)  |^{2}}{(1+|f\left(  z\right)
|^{2})^{2}},\label{502}%
\end{equation}
where $f\left(  z\right)  $ is a meromorphic function in $\mathbb{C}$.
Conventionally $f\left(  z\right)  $ is called a developing map of $u$. We
could see below that there associates a 2nd order complex ODE reducing from
the nonlinear PDE (\ref{501}). Indeed, it follows from (\ref{501}) that
outside $E_{\tau}\left[  2\right]  \cup \left \{  \pm p\right \}  $,
\begin{align*}
\left(  u_{zz}-\frac{1}{2}u_{z}^{2}\right)  _{\bar{z}}  & =\left(  u_{z\bar
{z}}\right)  _{z}-u_{z}u_{z\bar{z}}\\
& =\left(  -\frac{1}{4}e^{u}\right)  _{z}+\frac{1}{4}e^{u}u_{z}=0.
\end{align*}
So $u_{zz}-\frac{1}{2}u_{z}^{2}$ is an elliptic function on the torus
$E_{\tau}$ with singularties at $E_{\tau}\left[  2\right]  \cup \left \{  \pm
p\right \}  $. Since the behavior of $u$ is fixed by the RHS of (\ref{501}),
for example, $u(z)=2\log|z-p|+O(1)$ near $p$, we could compute explicitly the
dominate term of $u_{zz}-\frac{1}{2}u_{z}^{2}$ near each singular point. Let
us further assume that $u(z)$ is \textit{even}, i.e., $u(z)=u(-z)$. Then we
have%
\begin{align}
&  u_{zz}-\frac{1}{2}u_{z}^{2}\label{503}\\
= &  -2\left[
\begin{array}
[c]{l}%
\sum_{k=0}^{3}n_{k}\left(  n_{k}+1\right)  \wp \left(  z+\frac{\omega_{k}}%
{2}\right)  +\frac{3}{4}\left(  \wp \left(  z+p\right)  +\wp \left(  z-p\right)
\right)  \\
+A\left(  \zeta \left(  z+p\right)  -\zeta \left(  z-p\right)  \right)  +B
\end{array}
\right]  \nonumber \\
\doteqdot &  -2I\left(  z\right)  ,\nonumber
\end{align}
where $A,B$ are two (unknown) complex numbers.

On the other hand, we could deduce from (\ref{502}) that the Schwarzian
derivative $\{f;z\}$ of $f$ can be expressed by%
\begin{equation}
\{f;z\} \doteqdot \left(  \frac{f^{\prime \prime}}{f^{\prime}}\right)  ^{\prime
}-\frac{1}{2}\left(  \frac{f^{\prime \prime}}{f^{\prime}}\right)  ^{2}%
=u_{zz}-\frac{1}{2}u_{z}^{2}=-2I\left(  z\right)  .\label{504}%
\end{equation}
From (\ref{504}), we connect the developing maps of an even solution $u$ to
(\ref{501}) with the following generalized Lam\'{e} equation%
\begin{equation}
y^{\prime \prime}(z)=I(z)y(z)\text{ in }E_{\tau},\label{505}%
\end{equation}
where the potential $I(z)$ is given by (\ref{503}). In the classical
literature, the 2nd order ODE%
\begin{equation}
y^{\prime \prime}(z)=\left(  n(n+1)\wp(z)+B\right)  y(z)\text{ \ in \ }E_{\tau
}\label{503-1}%
\end{equation}
is called the Lam\'{e} equation, and has been extensively studied since the
19th century, particularly for the case $n\in \mathbb{Z}/2$. See
\cite{Chai-Lin-Wang,Halphen, Poole, Whittaker-Watson} and the references
therein. In this paper, we will prove that (\ref{503-1}) appears as a limiting
equation of (\ref{505}) under some circumstances (see Theorem
\ref{thm-II-9 copy(1)}).

From (\ref{502}) and (\ref{504}), any two developing maps $f_{i}$, $i=1,2$ of
the same solution $u$ must satisfy%
\[
f_{2}(z)=\alpha \cdot f_{1}(z)\doteqdot \frac{af_{1}(z)+b}{cf_{1}(z)+d}%
\]
for some $\alpha=\left(
\begin{matrix}
a & b\\
c & d
\end{matrix}
\right)  \in PSU(2)$. From here, we could define a projective monodromy
representation $\rho:\pi_{1}(E_{\tau}\backslash(E_{\tau}\left[  2\right]
\cup \{ \pm p\}),q_{0})\rightarrow PSU(2)$, where $q_{0}\not \in E_{\tau}\left[
2\right]  \cup \{ \pm p\}$ is a base point. Indeed, any developing map $f$ might
be multi-valued. For any loop $\ell \in \pi_{1}(E_{\tau}\backslash(E_{\tau
}\left[  2\right]  \cup \{ \pm p\}),q_{0})$, $\ell^{\ast}f$ denotes the analytic
continuation of $f$ along $\ell$. Since $\ell^{\ast}f$ is also a developing
map of the same $u$, there exists $\rho(\ell)\in PSU(2)$ such that $\ell
^{\ast}f=\rho(\ell)\cdot f$. Thus, the map $\ell \mapsto \rho(\ell)$ defines a
group homomorphism from $\pi_{1}(E_{\tau}\backslash(E_{\tau}\left[  2\right]
\cup \{ \pm p\}),q_{0})$ to $PSU(2)$. When $n_{k}\in \mathbb{N}\cup \left \{
0\right \}  $ for all $k$, the developing map is a single-valued meromorphic
function defined in $\mathbb{C}$, and $\left \{  \pm p\right \}  $ would become
apparent singularties. Thus, the projective monodromy representation would be
reduced to a homomorphism from $\pi_{1}\left(  E_{\tau},q_{0}\right)  $ to
$PSU(2)$. This could greatly simplify the computation of the monodromy group.
The deep connection of the mean field equation (\ref{501}) and the elliptic
form (\ref{124}) has been discussed in detail in \cite{Chen-Kuo-Lin}. This is
our motivation to study the isomonodromic deformation of the generalized
Lam\'{e} equation (\ref{505}) in this paper.

\subsection{Isomonodromic deformation and Hamiltonian system}

We are now in a position to state our main results. Recall the generalized
Lam\'{e} equation (\ref{505}):%
\begin{equation}
y^{\prime \prime}=\left[
\begin{array}
[c]{l}%
\sum_{k=0}^{3}n_{k}\left(  n_{k}+1\right)  \wp \left(  z+\frac{\omega_{k}}%
{2}\right)  +\frac{3}{4}\left(  \wp \left(  z+p\right)  +\wp \left(  z-p\right)
\right) \\
+A\left(  \zeta \left(  z+p\right)  -\zeta \left(  z-p\right)  \right)  +B
\end{array}
\right]  y. \label{89-0}%
\end{equation}
Equation (\ref{89-0}) has no solutions with logarithmic singularity at
$\frac{\omega_{k}}{2}$ unless $n_{k}\in \frac{1}{2}+\mathbb{Z}$. Therefore, we
need to assume the non-resonant condition: $n_{k}\not \in \frac{1}%
{2}+\mathbb{Z}$ for all $k$. Observe that the exponent difference of
(\ref{89-0}) at $\pm p$ is $2$. Here the singular points $\pm p$ are always
assumed to be \textit{apparent}. Under this assumption, the coefficients $A$
and $B$ together satisfy (\ref{101-0}) below. Our first main result is following.

\begin{theorem}
\label{theorem1-2} Let
\begin{equation}
\alpha_{k}=\frac{1}{2}\left(  n_{k}+\frac{1}{2}\right)  ^{2}\text{ with }%
n_{k}\not \in \frac{1}{2}+\mathbb{Z},\ k=0,1,2,3. \label{125}%
\end{equation}
Then $p\left(  \tau \right)  $ is a solution of the elliptic form (\ref{124})
if and only if there exist $A\left(  \tau \right)  $ and $B\left(  \tau \right)
$ such that the generalized Lam\'{e} equation (\ref{89-0}) with apparent
singularities at $\pm p\left(  \tau \right)  $ preserves the monodromy while
$\tau$ is deforming.
\end{theorem}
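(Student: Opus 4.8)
The plan is to realize the monodromy-preserving condition as an integrability (zero-curvature) condition for an auxiliary linear problem in the deformation variable $\tau$, and then to show that this condition, after eliminating the accessory parameters, is exactly the elliptic form \eqref{124}. Since \eqref{89-0} has no first-order term, the natural deformation operator is of Schr\"{o}dinger type: I look for a function $a(z,\tau)$ so that every local solution $y$ of \eqref{89-0} is continued in $\tau$ by $\partial_\tau y = a\,\partial_z y - \tfrac12(\partial_z a)\,y$. Computing the commutator $[\partial_\tau - M,\,L]$ with $L=\partial_z^2 - I$ and $M = a\,\partial_z - \tfrac12\,\partial_z a$ shows that this $\tau$-flow is compatible with \eqref{89-0} (hence preserves the solution space and the monodromy) if and only if
\[
\partial_\tau I = -\tfrac12\,\partial_z^{3} a + a\,\partial_z I + 2\,(\partial_z a)\,I,
\]
provided in addition that $M$ descends to the punctured torus with the correct quasi-periodicity, so that analytic continuation along $\pi_1(E_\tau\setminus(E_\tau[2]\cup\{\pm p\}))$ commutes with the flow. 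Because this is an equivalence, it yields both directions of the theorem simultaneously.

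Next I would pin down $a$. Only the points $\pm p$ move with $\tau$, while the half-periods $\tfrac{\omega_k}{2}$ and their exponent coefficients $n_k(n_k+1)$ stay fixed; matching the Laurent expansion of the compatibility condition at each $\tfrac{\omega_k}{2}$ forces $a$ to be regular there to the order needed to preserve the local exponents (here the non-resonance $n_k\notin\tfrac12+\mathbb{Z}$ excludes logarithmic terms). To carry the apparent singularities along the flow, $a$ must have simple poles at $\pm p$ with residues governed by $\dot p = dp/d\tau$. These requirements single out $a$ up to the needed structure, essentially an odd elliptic-type expression built from $\zeta(z\pm p)$ and $\wp(z\pm p)$, where the extra quasi-periodic correction forced by differentiating the lattice $\Lambda_\tau$ in $\tau$ is precisely what makes $M$ globally admissible.

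With $a$ fixed, the equations of motion come from expanding the compatibility condition at $z=p$. The apparent-singularity relation \eqref{101-0} (the no-log condition at $\pm p$, one relation among $A,B,p$) cancels the most singular terms; the residue level fixes the pole of $a$ in terms of $\dot p$, and the next order produces a second-order ODE $\ddot p = \cdots$. Evaluating the right-hand side requires the $\tau$-derivative formulas for the Weierstrass functions on the moduli space (the heat-type identities expressing $\partial_\tau\wp(z|\tau)$ and $\partial_\tau\zeta(z|\tau)$ through $\wp,\wp',\zeta$ and the quasi-period $\eta_1$), together with $\wp'''=12\,\wp\,\wp'$. Collecting the elliptic parts eliminates the accessory variable $A$ (equivalently $B$) and collapses the system to \eqref{124}. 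The parameter identification is then forced by the conversion of the local data at $\tfrac{\omega_k}{2}$: since $n_k(n_k+1)=2\alpha_k-\tfrac14$, the exponent coefficient in $I$ becomes exactly $\alpha_k=\tfrac12(n_k+\tfrac12)^2$, while the overall factor $-1/4\pi^2=(2\pi i)^{-2}$ arises because each of the two $\tau$-differentiations contributes a factor $(2\pi i)^{-1}$ through the heat relations.

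The main obstacle is the $\tau$-differentiation itself. Unlike the classical theory on $\mathbb{CP}^1$, where $t$ merely locates a singular point on a rigid base, here $\tau$ is simultaneously the deformation parameter and the modulus of $\Lambda_\tau$. Consequently $\partial_\tau\wp(z|\tau)$ and $\partial_\tau\zeta(z|\tau)$ are not elliptic, and controlling these non-elliptic, quasi-periodic terms -- so that the operator $M$ acquires exactly the quasi-periodicity needed to preserve the full monodromy representation, and not merely to continue solutions locally -- is the delicate step. Verifying that a single $a$ simultaneously respects all fixed singular points, keeps $\pm p$ apparent along the flow, and reproduces the precise coefficients of \eqref{124} is where the bulk of the computation lies.
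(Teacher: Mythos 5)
Your plan is, in substance, the paper's own proof written in scalar Lax--pair language: your deformation operator $M=a\,\partial_z-\tfrac12\partial_z a$ is exactly the scalar form of the paper's matrix $\Omega$ (with $a=\Omega_{12}$ and $\Omega_{11}=-\tfrac12\Omega_{12}'$ as in \eqref{qq}), your compatibility condition is precisely \eqref{186}, the quasi-periodicity you impose so that ``$M$ descends to the punctured torus'' is condition \eqref{qqq} of Theorem \ref{thm M1}, and the endgame --- expansion at $z=p$, the apparent-singularity relation \eqref{101-0}, the heat identities of Lemma \ref{lem4-1}, elimination of $A$ --- is Lemma \ref{lem-U} together with Theorems \ref{thm4-2} and \ref{thm4-1}. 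So the architecture is right and the two directions do come out of the equivalence as you say.

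There is, however, one step that fails as you state it: the claim that matching Laurent expansions at each $\tfrac{\omega_k}{2}$ \emph{forces} $a$ to be regular there. The local exponents of the third-order (second symmetric power) equation at $\tfrac{\omega_k}{2}$ are $-2n_k,\,1,\,2n_k+2$; non-resonance $n_k\notin\tfrac12+\mathbb{Z}$ rules out logarithms but not a pole of order $2n_k$ when $n_k\in\mathbb{N}$, and $a$ is genuinely non-unique when all $n_k\in\mathbb{Z}$ (Remark \ref{rmk} exhibits a one-parameter family of admissible $\tilde\Omega_{12}$). The paper closes this by a normalization you do not supply: the even part of any admissible $a$ is an elliptic solution $\Phi$ of the homogeneous equation \eqref{189}, and adding such a $\Phi$ merely changes the $M$-invariant fundamental solution (Lemma \ref{lemI}), so one may replace $a$ by its odd part; an odd function satisfying $a(z+\omega_i)=a(z)+\mathrm{const}$ has constant even part in its Laurent expansion at each half-period and therefore cannot have a pole of even order $2n_k$ there. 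Only after this symmetrization does $a$ collapse to the explicit form $-\tfrac{i}{4\pi}\left(\zeta(z-p)+\zeta(z+p)-2z\eta_1\right)$ of \eqref{c}, which is what the expansion at $z=p$ is performed on. Without this (or an equivalent) normalization argument the ``only if'' direction --- isomonodromy implies the elliptic form --- does not go through; the rest of your outline is sound.
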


Our method to prove Theorem \ref{theorem1-2} consists of two steps: the first
is to derive the isomonodromic equation, a Hamiltonian system, in the moduli
space of tori for (\ref{89-0}) under the non-resonant condition $n_{k}%
\not \in \frac{1}{2}+\mathbb{Z}$. The second is to prove that this
isomonodromic equation (the Hamiltonian system) is equivalent to the elliptic
form (\ref{124}). To describe the isomonodromic equation for (\ref{89-0}), we
let the Hamiltonian $K\left(  p,A,\tau \right)  $ be defined by%
\begin{align}
K\left(  p,A,\tau \right)   &  =\frac{-i}{4\pi}(B+2p\eta_{1}\left(
\tau \right)  A)\label{143-0}\\
&  =\frac{-i}{4\pi}\left(
\begin{array}
[c]{l}%
A^{2}+\left(  -\zeta \left(  2p|\tau \right)  +2p\eta_{1}\left(  \tau \right)
\right)  A-\frac{3}{4}\wp \left(  2p|\tau \right)  \\
-\sum_{k=0}^{3}n_{k}\left(  n_{k}+1\right)  \wp \left(  p+\frac{\omega_{k}}%
{2}|\tau \right)
\end{array}
\right)  .\nonumber
\end{align}
Consider the Hamiltonian system%
\begin{equation}
\left \{
\begin{array}
[c]{l}%
\frac{dp\left(  \tau \right)  }{d\tau}=\frac{\partial K\left(  p,A,\tau \right)
}{\partial A}=\frac{-i}{4\pi}\left(  2A-\zeta \left(  2p|\tau \right)
+2p\eta_{1}\left(  \tau \right)  \right)  \\
\frac{dA\left(  \tau \right)  }{d\tau}=-\frac{\partial K\left(  p,A,\tau
\right)  }{\partial p}=\frac{i}{4\pi}\left(
\begin{array}
[c]{l}%
\left(  2\wp \left(  2p|\tau \right)  +2\eta_{1}\left(  \tau \right)  \right)
A-\frac{3}{2}\wp^{\prime}\left(  2p|\tau \right)  \\
-\sum_{k=0}^{3}n_{k}\left(  n_{k}+1\right)  \wp^{\prime}\left(  p+\frac
{\omega_{k}}{2}|\tau \right)
\end{array}
\right)
\end{array}
\right.  .\label{142-0}%
\end{equation}
Then our first step leads to the following result:

\begin{theorem}
[=Theorem \ref{thm4-2}]\label{theorem1-3}Let $n_{k}\not \in \frac{1}%
{2}+\mathbb{Z}$, $k=0,1,2,3$. Then $(p(\tau),A(\tau))$ satisfies the
Hamiltonian system (\ref{142-0}) if and only if equation (\ref{89-0}) with
$(p(\tau),A(\tau),B(\tau))$ preserves the monodromy, where
\begin{equation}
B=A^{2}-\zeta \left(  2p\right)  A-\frac{3}{4}\wp \left(  2p\right)  -\sum
_{k=0}^{3}n_{k}\left(  n_{k}+1\right)  \wp \left(  p+\frac{\omega_{k}}%
{2}\right)  .\label{101-0}%
\end{equation}

\end{theorem}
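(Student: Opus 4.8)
The plan is to realize the monodromy-preserving deformation of (\ref{89-0}) through an auxiliary linear equation in the modulus $\tau$, and then to read off the two Hamiltonian equations from a local analysis at the apparent singularities $\pm p$. Concretely, alongside $y''=I(z)y$ I would look for a first-order deformation operator
\[
\frac{\partial y}{\partial\tau}=a(z,\tau)\,\frac{\partial y}{\partial z}-\tfrac12\,\frac{\partial a}{\partial z}(z,\tau)\,y,
\]
where $\partial_\tau$ is taken at $z$ fixed in the universal cover $\mathbb{C}$ and $a(z,\tau)$ is meromorphic in $z$ with poles only at $\pm p(\tau)$. Preservation of the monodromy representation is equivalent to compatibility of this equation with (\ref{89-0}); cross-differentiating $y_{zz}=Iy$ against the auxiliary equation and eliminating $y_z,y$ yields the single scalar identity
\[
\frac{\partial I}{\partial\tau}=a\,\frac{\partial I}{\partial z}+2\,\frac{\partial a}{\partial z}\,I-\tfrac12\,\frac{\partial^3 a}{\partial z^3}.
\]
Thus the whole problem reduces to deciding when this identity admits a solution $a$ of the prescribed type, and to identifying the conditions on $(p,A)$ that this imposes.

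To evaluate the left-hand side I would first establish the differentiation formula for the Weierstrass functions with respect to the modulus at fixed $z$, expressing $\partial_\tau\wp(z|\tau)$ and $\partial_\tau\zeta(z|\tau)$ through $\wp,\wp',\zeta$ and $\eta_1$. This comes from the heat equation $\partial_\tau\theta_1=\frac{1}{4\pi i}\partial_z^2\theta_1$ together with the representation $\wp=-\partial_z^2\log\theta_1-\eta_1$; it is exactly the appearance here of the constant $\frac{1}{4\pi i}=\frac{-i}{4\pi}$ that matches the prefactor of the Hamiltonian (\ref{143-0}). Since $I$ depends on $\tau$ both explicitly through the lattice and implicitly through $p(\tau),A(\tau),B(\tau)$, the total derivative splits as the explicit (heat-equation) part plus $I_p\,p'+I_A\,A'+I_B\,B'$. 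Substituting these turns the scalar identity into an equality of two elliptic-type functions of $z$ whose only poles sit at $\pm p$ and at the half-periods $\tfrac{\omega_k}{2}$.

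I would then determine $a$ and extract the dynamics by matching Laurent coefficients. Near $z=\pm p$ the potential has a double pole of coefficient $\tfrac34$ (exponents $-\tfrac12,\tfrac32$, so the singularity is apparent) and a simple pole of residue $\mp A$; the motion $p'(\tau)$ produces, through $I_p$, a pole of order up to three in $\partial_\tau I$. Matching these most singular terms forces $a$ to be a definite multiple of a function with simple poles at $\pm p$ and fixes its leading behaviour, which yields the first Hamiltonian equation $\frac{dp}{d\tau}=\partial K/\partial A$. Matching the next (residue and constant) coefficients at $\pm p$, and using the apparent-singularity relation (\ref{101-0}) to eliminate $B$ and $B'(\tau)$, produces the second equation $\frac{dA}{d\tau}=-\partial K/\partial p$. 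Conversely, if $(p,A)$ solves (\ref{142-0}) one reverses these matchings to build a consistent $a$, so the scalar identity holds and the monodromy is preserved; this establishes both implications at once.

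The step I expect to be the main obstacle is the global, quasi-periodic bookkeeping. The functions $y$ and $a$ are not elliptic but transform with definite multipliers under $z\mapsto z+1$ and $z\mapsto z+\tau$, and preserving the two torus-cycle monodromies — not merely the local data at $\pm p$ and $\tfrac{\omega_k}{2}$ — requires $a$ to carry the correct quasi-periodicity. Tracking the resulting $\eta_1,\eta_2$ terms, and invoking the Legendre relation $\eta_1\tau-\eta_2=2\pi i$, is precisely what generates the combination $-\zeta(2p)+2p\eta_1$ and the term $2p\eta_1 A$ appearing in (\ref{143-0}); it is also where one must verify, using the non-resonance $n_k\notin\tfrac12+\mathbb{Z}$, that no spurious pole is created at the half-periods. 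Checking that this overdetermined matching is consistent and reproduces exactly the system (\ref{142-0}) with no extra constraints is the technical heart of the argument.
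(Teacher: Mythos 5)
Your strategy is essentially the paper's own: reduce monodromy preservation to the existence of a deformation coefficient $a=\Omega_{12}$ satisfying the scalar compatibility identity (your displayed equation is exactly (\ref{186}) with $a=\Omega_{12}$, and your ansatz $\partial_\tau y=ay'-\tfrac12 a'y$ is the choice $\Omega_{11}=-\tfrac12\Omega_{12}'$ of (\ref{qq})) together with the quasi-periodicity $a(z+1)=a(z)$, $a(z+\tau)=a(z)-1$ coming from the two torus-cycle monodromies; then pin down $a$ explicitly via its pole structure, quasi-periodicity and the Legendre relation, and read off (\ref{142-0}) by matching Laurent coefficients of the resulting elliptic function $U$ at $\pm p$. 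The heat-equation formulas you invoke are Lemma \ref{lem4-1}, and the determination of the constants in $a$ is Step 3 of the proof of Theorem \ref{thmA}.

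The genuine gap is in the direction ``monodromy preserving $\Rightarrow$ Hamiltonian system'': you posit from the outset that $a$ has poles only at $\pm p(\tau)$, and you assert that the non-resonance condition $n_k\notin\tfrac12+\mathbb{Z}$ is what prevents ``spurious poles at the half-periods.'' Neither is justified, and the second claim is false as stated. Isomonodromy only supplies \emph{some} single-valued $a$ solving the compatibility identity; Lemma \ref{lem4-5} (ii) shows such an $a$ can have a pole of order $2n_i$ at $\frac{\omega_i}{2}$ whenever $n_i\in\mathbb{N}$ --- a case fully allowed by non-resonance --- and Remark \ref{rmk} exhibits the resulting non-uniqueness of $a$ explicitly: the ambiguity is any elliptic solution $\Phi$ of the second symmetric product equation (\ref{189}). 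Your matching of the most singular terms at $\pm p$ constrains only the local behaviour there and cannot exclude these extra poles, so it does not ``force'' $a$ into the claimed form. The missing idea is the normalization carried out in the proof of Theorem \ref{thmA}: replace $a(z)$ by its odd part $\tfrac12\left(a(z)-a(-z)\right)$, verify via Lemmas \ref{lemI} and \ref{lem4-4} (using the fundamental solution built from $Y(-z)$ with the sign flip in the second row) that the even part is an elliptic solution of (\ref{189}) and may be subtracted while $a$ remains a legitimate deformation coefficient for some $M$-invariant fundamental solution, and then observe that oddness combined with the quasi-periodicity (\ref{qqq}) is incompatible with a pole of even order $2n_i$ at a half-period. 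Only after this step does $a=-\frac{i}{4\pi}\left(\zeta(z-p)+\zeta(z+p)-2z\eta_1\right)$ follow and your Laurent matching at $\pm p$ become a complete argument; the converse direction is unaffected, since there one simply exhibits this particular $a$.
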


And the second step is to prove

\begin{theorem}
[=Theorem \ref{thm4-1}]\label{theorem1-4}The elliptic form (\ref{124}) is
equivalent to the Hamiltonian system (\ref{142-0}), where $\alpha_{k}=\frac
{1}{2}\left(  n_{k}+\frac{1}{2}\right)  ^{2},$\ $k=0,1,2,3$.
\end{theorem}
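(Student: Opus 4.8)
The plan is to prove the equivalence by eliminating the auxiliary unknown $A$ from the Hamiltonian system \eqref{142-0} and showing that the resulting second-order equation for $p(\tau)$ coincides with the elliptic form \eqref{124}. First I would solve the first equation of \eqref{142-0} for $A$, giving
\[
A = 2\pi i\,\frac{dp}{d\tau} + \tfrac12\,\zeta(2p|\tau) - p\,\eta_1(\tau),
\]
which expresses $A$ through $p$ and $dp/d\tau$. Differentiating $dp/d\tau=\partial_A K$ once more in $\tau$ and substituting Hamilton's equations $dA/d\tau=-\partial_p K$ yields
\[
\frac{d^2 p}{d\tau^2} = -\,\partial_A^2 K\,\partial_p K + \partial_A\partial_p K\,\partial_A K + \partial_A\partial_\tau K,
\]
where $\partial_A\partial_\tau K$ collects the explicit modulus-dependence of $\zeta(2p|\tau)$ and $\eta_1(\tau)$ inside $\partial_A K$.

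The first thing to verify is that every term linear in $A$ cancels: the factor $(\wp(2p)+\eta_1)A$ produced by $-\partial_A^2K\,\partial_p K$ and the one produced by $\partial_A\partial_p K\,\partial_A K$ occur with opposite sign, so the dependence on $A$ (equivalently on $dp/d\tau$) disappears, as it must since \eqref{124} has no first-derivative term. After this cancellation the equation involves only $\wp'(2p)$, $\wp(2p)$, $\zeta(2p)$, the half-period translates $\wp'(p+\omega_k/2)$, and the term $\partial_A\partial_\tau K$.

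To evaluate $\partial_A\partial_\tau K$ I would use the classical modular (heat-type) differentiation formula for the Weierstrass zeta function,
\[
\partial_\tau \zeta(z|\tau) = \frac{1}{4\pi i}\Big(-\wp'(z) - 2\big(\zeta(z)-\eta_1 z\big)\big(\wp(z)+\eta_1\big)\Big) + \frac{d\eta_1}{d\tau}\,z,
\]
evaluated at $z=2p$. Two more cancellations then take place: the $\tfrac{d\eta_1}{d\tau}$ contribution annihilates the $2p\,\tfrac{d\eta_1}{d\tau}$ coming from differentiating $p\,\eta_1(\tau)$, and the product $(\zeta(2p)-2p\eta_1)(\wp(2p)+\eta_1)$ exactly cancels the matching product already present. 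What survives is
\[
\frac{d^2p}{d\tau^2} = -\frac{1}{4\pi^2}\wp'(2p) - \frac{1}{8\pi^2}\sum_{k=0}^3 n_k(n_k+1)\,\wp'\!\big(p+\tfrac{\omega_k}{2}\big).
\]

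The last step is elementary elliptic-function theory. Writing $\alpha_k = \tfrac12(n_k+\tfrac12)^2 = \tfrac12 n_k(n_k+1)+\tfrac18$ and using the duplication identity
\[
\sum_{k=0}^3 \wp'\!\big(z+\tfrac{\omega_k}{2}\big) = 8\,\wp'(2z),
\]
which follows in one line since both sides are odd elliptic functions with identical triple poles at the four half-periods, the term $-\tfrac{1}{4\pi^2}\wp'(2p)$ equals $-\tfrac{1}{4\pi^2}\cdot\tfrac18\sum_k\wp'(p+\tfrac{\omega_k}{2})$; combining it with the $n_k(n_k+1)$-sum reproduces $-\tfrac{1}{4\pi^2}\sum_k\alpha_k\wp'(p+\tfrac{\omega_k}{2})$, i.e. \eqref{124}. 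The converse is the same computation read backwards: starting from a solution $p$ of \eqref{124}, one defines $A$ by the displayed formula so that the first equation of \eqref{142-0} holds automatically, and the identity just established forces the second. The main obstacle I anticipate is not conceptual but bookkeeping: keeping the precise constants and signs in the modular-derivative formula for $\zeta(z|\tau)$ (and in $d\eta_1/d\tau$) correct, since the entire argument rests on the exact cancellation of the $\eta_1$- and $A$-dependent terms.
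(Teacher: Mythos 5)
Your proposal is correct and follows essentially the same route as the paper: differentiate the first Hamilton equation, substitute the second, invoke the modular heat-type formulas for $\partial_\tau\zeta(z|\tau)$ and $d\eta_1/d\tau$ (the paper's Lemma \ref{lem4-1}) to effect exactly the cancellations you describe, and finish with the duplication identity $\sum_{k=0}^{3}\wp'\!\bigl(z+\tfrac{\omega_k}{2}\bigr)=8\wp'(2z)$; the converse is handled identically by defining $A$ from the first equation. The only difference is organizational (you phrase the computation via second partials of $K$ rather than direct substitution), and your version of the $\partial_\tau\zeta$ formula is equivalent to the paper's after absorbing the $d\eta_1/d\tau$ term.
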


Clearly Theorem \ref{theorem1-2} follows from Theorems \ref{theorem1-3} and
\ref{theorem1-4} directly.

\begin{remark}
In \cite{Y.Manin}, Manin rewrote the elliptic form (\ref{124}) into an obvious
time-dependent Hamiltonian system:%
\begin{equation}
\frac{dp\left(  \tau \right)  }{d\tau}=\frac{\partial H}{\partial q},\text{
}\frac{dq\left(  \tau \right)  }{d\tau}=-\frac{\partial H}{\partial p},
\label{HM}%
\end{equation}
where%
\[
H=H\left(  \tau,p,q\right)  \doteqdot \frac{q^{2}}{2}+\frac{1}{4\pi^{2}}%
\sum_{i=0}^{3}\alpha_{i}\wp \left(  p\left(  \tau \right)  +\frac{\omega_{i}}%
{2}|\tau \right)  .
\]
However, it is not clear whether the Hamiltonian system (\ref{HM}) governs
isomonodromic deformations of any Fuchsian equations in $E_{\tau}$ or not.
Different from (\ref{HM}), our Hamiltonian system (\ref{142-0}) governs
isomonodromic deformations of the generalized Lam\'{e} equation for generic parameters.
\end{remark}

Both Theorems \ref{theorem1-3} and \ref{theorem1-4} are proved in Section 2.
It seems that the generalized Lam\'{e} equation (\ref{89-0}) looks simpler
than the corresponding Fuchsian ODE on $\mathbb{CP}^{1}$, and it is the same
for the Hamiltonian system (\ref{142-0}), compared to the corresponding one on
$\mathbb{CP}^{1}$. From the second equation of (\ref{142-0}), $A\left(
\tau \right)  $ can be integrated so that we have the following theorem:

\begin{theorem}
\label{theorem1-5}Suppose $(p(\tau),A(\tau))$ satisfies the Hamiltonian system
(\ref{142-0}). Define
\begin{equation}
F(\tau)\doteqdot A(\tau)+\frac{1}{2}(\zeta(2p(\tau)|\tau)-2\zeta(p(\tau
)|\tau)).\label{F}%
\end{equation}
Then%
\begin{align}
F\left(  \tau \right)   &  =\theta_{1}^{\prime}(\tau)^{\frac{2}{3}}\exp \left \{
\frac{i}{2\pi}\int^{\tau}\left(  2\wp(2p(\hat{\tau})|\hat{\tau})-\wp
(p(\hat{\tau})|\hat{\tau})\right)  d\hat{\tau}\right \}  \times \nonumber \\
&  \left(  \int^{\tau}\frac{-\frac{i}{4\pi}\theta_{1}^{\prime}(\hat{\tau
})^{-\frac{2}{3}}\left(  \sum_{k=0}^{3}n_{k}(n_{k}+1)\wp^{\prime}(p(\hat{\tau
})+\frac{\omega_{k}}{2}|\hat{\tau})\right)  }{\exp \left \{  \frac{i}{2\pi}%
\int^{\hat{\tau}}\left(  2\wp(2p(\tau^{\prime})|\tau^{\prime})-\wp
(p(\tau^{\prime})|\tau^{\prime})\right)  d\tau^{\prime}\right \}  }d\hat{\tau
}+c_{1}\right)  \label{514}%
\end{align}
for some constant $c_{1}\in \mathbb{C}$, where $\theta_{1}^{\prime}(\tau
)=\frac{d\vartheta_{1}\left(  z;\tau \right)  }{dz}|_{z=0}$ and $\vartheta
_{1}\left(  z;\tau \right)  $ is the odd theta function defined in (\ref{ccc}).
In particular, for $n_{k}=0,\forall k$, we have%
\begin{equation}
F(\tau)=c\theta_{1}^{\prime}(\tau)^{\frac{2}{3}}\exp \left \{  \frac{i}{2\pi
}\int^{\tau}\left(  2\wp(2p(\hat{\tau})|\hat{\tau})-\wp(p(\hat{\tau}%
)|\hat{\tau})\right)  d\hat{\tau}\right \}  \label{515}%
\end{equation}
for some constant $c\in \mathbb{C}\backslash \left \{  0\right \}  $.
\end{theorem}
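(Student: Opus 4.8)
The plan is to read the second equation of the Hamiltonian system (\ref{142-0}) as a first-order \emph{linear} inhomogeneous ODE for $A(\tau)$, with $p(\tau)$ treated as a known coefficient, and to integrate it after the change of unknown $A\mapsto F$ prescribed by (\ref{F}). Concretely, I would differentiate $F=A+\frac{1}{2}(\zeta(2p|\tau)-2\zeta(p|\tau))$ in $\tau$, substitute \emph{both} equations of (\ref{142-0}), and show that $F$ satisfies
\begin{equation}
\frac{dF}{d\tau}=\left[  \frac{2}{3}\frac{d}{d\tau}\log \theta_{1}^{\prime}(\tau)+\frac{i}{2\pi}\left(  2\wp(2p|\tau)-\wp(p|\tau)\right)  \right]  F-\frac{i}{4\pi}\sum_{k=0}^{3}n_{k}(n_{k}+1)\wp^{\prime}\left(  p+\tfrac{\omega_{k}}{2}|\tau \right)  .\label{plan-star}
\end{equation}
Granting (\ref{plan-star}), both (\ref{514}) and (\ref{515}) are immediate by the integrating-factor method: the factor $\mu(\tau)=\theta_{1}^{\prime}(\tau)^{-2/3}\exp\{-\frac{i}{2\pi}\int^{\tau}(2\wp(2p)-\wp(p))\,d\hat\tau\}$ satisfies $\mu'=-[\cdots]\mu$, so $\frac{d}{d\tau}(\mu F)=\mu\cdot(\text{inhomogeneous term})$; a single integration reproduces exactly (\ref{514}), and when $n_{k}\equiv 0$ the inhomogeneous term vanishes, leaving $\mu F=\text{const}$, which is (\ref{515}).

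The real content is the derivation of (\ref{plan-star}). Differentiating the two $\zeta$-terms uses the chain rule with $\partial_{z}\zeta=-\wp$ and the classical $\tau$-derivative formula for $\zeta(z|\tau)$. The structural observation that makes the coefficient come out correctly is that $\frac{dp}{d\tau}$ carries the summand $\frac{-i}{2\pi}A$; hence $\frac{1}{2}\frac{d}{d\tau}(\zeta(2p)-2\zeta(p))$ itself produces a term \emph{linear} in $A$, equal to $\frac{i}{2\pi}(\wp(2p)-\wp(p))A$, since $\partial_{p}(\zeta(2p)-2\zeta(p))=-2(\wp(2p)-\wp(p))$. Adding this to the $A$-linear part $\frac{i}{2\pi}(\wp(2p)+\eta_{1})A$ coming from $\frac{dA}{d\tau}$ yields the total coefficient $\frac{i}{2\pi}(2\wp(2p)-\wp(p)+\eta_{1})$ of $A$; rewriting $A=F-\frac{1}{2}(\zeta(2p)-2\zeta(p))$ then turns this into the bracketed coefficient of $F$ in (\ref{plan-star}).

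Two facts remain, and verifying them is the main obstacle. First, the $\eta_{1}$ piece of the coefficient must be identified with the modular factor, i.e. one needs the standard identity $\frac{2}{3}\frac{d}{d\tau}\log\theta_{1}^{\prime}(\tau)=\frac{i}{2\pi}\eta_{1}(\tau)$; this follows from the heat equation for $\vartheta_{1}$ (equivalently from $\eta_{1}=\frac{\pi^{2}}{3}E_{2}$ together with the fact that $\theta_{1}^{\prime}$ is a constant times the cube of the Dedekind eta function). Second, and more delicately, all terms \emph{not} proportional to $A$ must collapse to the single source $-\frac{i}{4\pi}\sum_{k}n_{k}(n_{k}+1)\wp^{\prime}(p+\frac{\omega_{k}}{2}|\tau)$; in particular the term $-\frac{3i}{8\pi}\wp^{\prime}(2p)$ from $\frac{dA}{d\tau}$ must cancel against the explicit $\partial_{\tau}$-derivative of $\zeta(2p)-2\zeta(p)$ and against the $-\frac{i}{2\pi}(2\wp(2p)-\wp(p)+\eta_{1})\cdot\frac{1}{2}(\zeta(2p)-2\zeta(p))$ contribution generated by the substitution $A=F-\frac{1}{2}(\zeta(2p)-2\zeta(p))$. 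This precise cancellation is what forces the combination $\frac{1}{2}(\zeta(2p)-2\zeta(p))$ in the definition (\ref{F}) of $F$, and I expect it to require the explicit $\partial_{\tau}\zeta$ formula together with the duplication relations $\zeta(2p)=2\zeta(p)+\frac{\wp^{\prime\prime}(p)}{2\wp^{\prime}(p)}$ and $\wp(2p)=-2\wp(p)+\frac{1}{4}(\wp^{\prime\prime}(p)/\wp^{\prime}(p))^{2}$ to close. Once this is checked, (\ref{plan-star}) holds and the integrating-factor step of the first paragraph completes the proof.
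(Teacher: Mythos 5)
Your proposal is correct and follows essentially the same route as the paper: differentiate $F$, use both equations of (\ref{142-0}) together with the $\partial_\tau\zeta$ formula to obtain the linear ODE $\frac{dF}{d\tau}=\frac{i}{2\pi}\left(2\wp(2p)-\wp(p)+\eta_1\right)F-\frac{i}{4\pi}\sum_k n_k(n_k+1)\wp'(p+\frac{\omega_k}{2})$, then integrate and convert the $\eta_1$ part of the exponential into $\theta_1'(\tau)^{2/3}$ via $\frac{d}{d\tau}\ln\theta_1'=\frac{3i}{4\pi}\eta_1$ (Lemma \ref{lem4-1}(vi)). The cancellation you flag as the main obstacle does close exactly as you anticipate, via the addition formula $\zeta(2p)-2\zeta(p)=-\frac{\wp'(2p)+\wp'(p)}{2(\wp(2p)-\wp(p))}$ (equivalently the duplication relations you cite).
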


\begin{remark}
\label{remk-1}Let $\eta(\tau)$ be the Dedekind eta function: $\eta
(\tau)\doteqdot q^{\frac{1}{24}}\prod_{n=1}^{\infty}(1-q^{n})$, where
$q=e^{2\pi i\tau}$ for $\tau \in \mathbb{H}$. Then $\theta_{1}^{\prime}%
(\tau)=2\pi \eta^{3}(\tau)$.
\end{remark}

The elliptic form (\ref{124}) and our results above could be applied to
understand the phenomena of collapsing two singular points $\pm p(\tau)$ to
$\frac{\omega_{k}}{2}$ in the generalized Lam\'{e} equation (\ref{89-0}). In
general, when $p(\tau)\rightarrow \frac{\omega_{k}}{2}$ as $\tau \rightarrow
\tau_{0}$, the generalized Lam\'{e} equation might not be well-defined.
However, when $p(\tau)$ is a solution of the elliptic form (\ref{124}), by
using the behavior of $p(\tau)$ near $\tau_{0}$, the following result shows
that the generalized Lam\'{e} equation will converge to the classical Lam\'{e}
equation (\ref{503-1}). 

Observe that if $p(\tau)$ is a solution of the elliptic form (\ref{124}), then
$p(\tau)-\frac{\omega_{k}}{2}$ is also a solution of (\ref{124}) (maybe with
different parameters). Therefore, we only need to study the case
$p(\tau)\rightarrow0$. More precisely, we have:

\begin{theorem}
[=Theorem \ref{thm-II-9}]\label{thm-II-9 copy(1)}Suppose that $n_{k}%
\not \in \frac{1}{2}+\mathbb{Z}$, $k=0,1,2,3$, and (\ref{101-0}) holds. Let
$(p(\tau),A(\tau))$ be a solution of the Hamiltonian system (\ref{142-0}) such
that $p(\tau_{0})=0$ for some $\tau_{0}\in \mathbb{H}$. Then%
\begin{equation}
p(\tau)=c_{0}(\tau-\tau_{0})^{\frac{1}{2}}(1+\tilde{h}(\tau-\tau_{0}%
)+O(\tau-\tau_{0})^{2})\text{ as }\tau \rightarrow \tau_{0},\label{asymp}%
\end{equation}
where $c_{0}^{2}=\pm i\frac{n_{0}+\frac{1}{2}}{\pi}$ and $\tilde{h}%
\in \mathbb{C}$ is some constant. Moreover, the generalized Lam\'{e} equation
(\ref{89-0}) as $\tau \rightarrow \tau_{0}$ converges to%
\[
y^{\prime \prime}=\left[  \sum_{j=1}^{3}n_{j}\left(  n_{j}+1\right)  \wp \left(
z+\frac{\omega_{j}}{2}\right)  +m(m+1)\wp(z)+B_{0}\right]  y\text{ in }%
E_{\tau_{0}}%
\]
where%
\[
m=\left \{
\begin{array}
[c]{l}%
n_{0}+1\text{ \ if \ }c_{0}^{2}=i\frac{n_{0}+\frac{1}{2}}{\pi},\\
n_{0}-1\text{ \ if \ }c_{0}^{2}=-i\frac{n_{0}+\frac{1}{2}}{\pi},
\end{array}
\right.
\]%
\[
B_{0}=2\pi ic_{0}^{2}\left(  4\pi i\tilde{h}-\eta_{1}(\tau_{0})\right)
-\sum_{j=1}^{3}n_{j}(n_{j}+1)e_{j}(\tau_{0}).
\]

\end{theorem}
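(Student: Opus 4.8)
The plan is to reduce everything to the elliptic form and to a careful local analysis at $\tau_{0}$. Since $(p(\tau),A(\tau))$ solves the Hamiltonian system (\ref{142-0}), Theorem \ref{theorem1-4} tells me that $p(\tau)$ solves the elliptic form (\ref{124}) with $\alpha_{k}=\frac12(n_{k}+\frac12)^{2}$. The one structural fact I would isolate first is that
\[
F(p,\tau)\doteqdot-\frac{1}{4\pi^{2}}\sum_{i=0}^{3}\alpha_{i}\wp^{\prime}\left(p+\tfrac{\omega_{i}}{2}|\tau\right)
\]
is an \emph{odd} function of $p$ for each fixed $\tau$ (because each $\wp(p+\frac{\omega_{i}}{2})$ is even in $p$), and that near $p=0$ only the $i=0$ term is singular, so that
\[
F(p,\tau)=\frac{\alpha_{0}}{2\pi^{2}}p^{-3}+\sum_{j\ge0}b_{j}(\tau)\,p^{2j+1},
\]
with $b_{j}(\tau)$ holomorphic near $\tau_{0}$.

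For the expansion (\ref{asymp}) I set $s=\tau-\tau_{0}$ and substitute $p=s^{1/2}Q(s)$ into $p^{\prime\prime}=F(p,\tau)$. Multiplying by $s^{3/2}$ produces the regular--singular equation
\[
s^{2}Q^{\prime\prime}+sQ^{\prime}-\tfrac14 Q=\frac{\alpha_{0}}{2\pi^{2}}Q^{-3}+\sum_{j\ge0}b_{j}(\tau_{0}+s)\,s^{2j+2}Q^{2j+1},
\]
whose coefficients are holomorphic in $s$. Since $L=s^{2}\partial_{s}^{2}+s\partial_{s}-\frac14$ satisfies $Ls^{m/2}=\frac{m^{2}-1}{4}s^{m/2}$, writing $Q=\sum_{m\ge0}q_{m}s^{m/2}$ the order-$s^{0}$ balance gives $-\frac14 q_{0}=\frac{\alpha_{0}}{2\pi^{2}}q_{0}^{-3}$, i.e. $q_{0}^{4}=-\frac{(n_{0}+1/2)^{2}}{\pi^{2}}$, which is $c_{0}^{2}=\pm i\frac{n_{0}+1/2}{\pi}$ with $c_{0}=q_{0}$. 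At order $s^{1/2}$ the $L$-eigenvalue vanishes, while the linearization of $Q^{-3}$ contributes a nonzero multiple of $q_{1}$, forcing $q_{1}=0$; at order $s^{1}$ the balance is vacuous, so $q_{2}$ is a free constant, which becomes $\tilde h$ via $q_{2}=c_{0}\tilde h$. Finally, oddness of $F$ makes the recursion parity-preserving in the powers of $s^{1/2}$: once $q_{1}=0$, an induction gives $q_{m}=0$ for every odd $m$ (for odd $m\ge3$ the eigenvalue $\frac{m^{2}-1}{4}\ne0$, while the forcing built from an all-even series has no half-integer part). This yields $p=c_{0}s^{1/2}(1+\tilde h\,s+O(s^{2}))$.

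For the limiting equation I take $\tau\to\tau_{0}$ in the potential of (\ref{89-0}) with $z$ fixed off the singular set, using
\[
\wp(z+p)+\wp(z-p)=2\wp(z)+O(p^{2}),\qquad \zeta(z+p)-\zeta(z-p)=-2p\,\wp(z)+O(p^{3}).
\]
From the first line of (\ref{142-0}), $2A=4\pi i\,p^{\prime}+\zeta(2p)-2p\eta_{1}$; with $p^{\prime}\sim\frac{c_{0}}{2}s^{-1/2}$ and $\zeta(2p)\sim\frac{1}{2c_{0}}s^{-1/2}$ this gives $A\sim\frac{\kappa}{2}s^{-1/2}$, $\kappa=2\pi i c_{0}+\frac{1}{2c_{0}}$, hence $Ap\to\pi i c_{0}^{2}+\frac14$ (finite) while $Ap^{3}\to0$. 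Combining the $k=0$ term $n_{0}(n_{0}+1)\wp(z)$, the limit $\frac32\wp(z)$ of $\frac34(\wp(z+p)+\wp(z-p))$, and the limit $-2(\lim Ap)\wp(z)$ of the $\zeta$-term, the coefficient of $\wp(z)$ is
\[
m(m+1)=n_{0}(n_{0}+1)+\tfrac32-2\Big(\pi i c_{0}^{2}+\tfrac14\Big),
\]
which on inserting $c_{0}^{2}=\pm i\frac{n_{0}+1/2}{\pi}$ equals $(n_{0}+1)(n_{0}+2)$ or $(n_{0}-1)n_{0}$, i.e. $m=n_{0}\pm1$; the terms $n_{j}(n_{j}+1)\wp(z+\frac{\omega_{j}}{2})$, $j=1,2,3$, converge to their values at $\tau_{0}$.

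The genuinely laborious step, and the one I expect to be the main obstacle, is the constant $B_{0}$. Here I would expand $p$, $p^{\prime}$ and hence $A$ one order further (so that $\tilde h$ enters), together with $\wp(2p)$, $\wp(p)$ and $\zeta(2p)$ to their constant terms, and substitute into the apparent-singularity relation (\ref{101-0}). Two things must be checked: the $s^{-1}$ singular part of $B$ cancels, which happens precisely because $c_{0}^{4}=-\frac{(n_{0}+1/2)^{2}}{\pi^{2}}$ (equivalently $\frac{(n_{0}+1/2)^{2}}{c_{0}^{2}}=-\pi^{2}c_{0}^{2}$), confirming that $B$ has a finite limit; and the surviving constant must be assembled. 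Carrying out the bookkeeping, the contributions proportional to $\tilde h$ and to $\eta_{1}(\tau_{0})$ combine into $2\pi i c_{0}^{2}(4\pi i\tilde h-\eta_{1}(\tau_{0}))$, while $-\sum_{j=1}^{3}n_{j}(n_{j}+1)\wp(p+\frac{\omega_{j}}{2})\to-\sum_{j=1}^{3}n_{j}(n_{j}+1)e_{j}(\tau_{0})$, giving the stated $B_{0}$. All the delicacy lies in tracking the $O(s^{-1})$ and $O(1)$ parts of four separately divergent quantities, so I would organize the computation around the identities $\frac{\alpha_{0}}{2\pi^{2}}c_{0}^{-4}=-\frac14$ and $(n_{0}+\frac12)^{2}=-\pi^{2}c_{0}^{4}$ to make the cancellations transparent.
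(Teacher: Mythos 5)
Your treatment of the limiting equation and of $B_{0}$ is essentially the paper's own computation: the paper likewise uses the first Hamiltonian equation to get $A=\frac{c}{p}+ep+O(p^{3})$ with $c=\pi ic_{0}^{2}+\frac{1}{4}$ and $e=4\pi i\tilde{h}-\eta_{1}(\tau_{0})$, cancels the $p^{-2}$ part of $B$ via $c^{2}-\frac{c}{2}-\frac{3}{16}-n_{0}(n_{0}+1)=0$, and reads off $m(m+1)=n_{0}(n_{0}+1)+\frac{3}{2}-2c$ and $B_{0}=\frac{4c-1}{2}e-\sum_{j}n_{j}(n_{j}+1)e_{j}(\tau_{0})$; your organizing identities $\frac{\alpha_{0}}{2\pi^{2}}c_{0}^{-4}=-\frac{1}{4}$ and $(n_{0}+\frac{1}{2})^{2}=-\pi^{2}c_{0}^{4}$ are exactly what makes those cancellations work, so this part is fine. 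Where you genuinely diverge is in establishing (\ref{asymp}). The paper does no local series analysis of the elliptic form: it quotes the classical result (Theorem A, \cite[Proposition 1.4.1]{GP}) that the Painlev\'{e} VI solutions with a pole at $t_{0}$ form exactly two one-parameter families with $\lambda(t)=\frac{\beta}{t-t_{0}}+h+O(t-t_{0})$, $\beta=\pm t_{0}(t_{0}-1)/\theta_{4}$, and transports this through the covering map (\ref{tr}); the only computation needed is $t^{\prime}(\tau_{0})=-i\,t_{0}(t_{0}-1)(e_{2}-e_{1})/\pi$ in Lemma \ref{lemII-4}.

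The gap in your route is that the Frobenius-type computation is a formal consistency check, not yet a proof of (\ref{asymp}). It correctly shows that \emph{if} $p$ admits a convergent Puiseux expansion in $(\tau-\tau_{0})^{1/2}$ with leading exponent $\frac{1}{2}$ and nonzero leading coefficient, then $c_{0}^{4}=-(n_{0}+\frac{1}{2})^{2}/\pi^{2}$, $q_{1}=0$, $q_{2}$ is free, and all odd coefficients vanish. But nothing in the argument shows that an arbitrary solution with $p(\tau_{0})=0$ must admit such an expansion: a priori the approach to $0$ could involve other exponents or logarithmic terms, and the convergence of the constructed series is not addressed. This is precisely what the Painlev\'{e} property supplies and what Theorem A encapsulates --- $\lambda(t)=\frac{\wp(p)-e_{1}}{e_{2}-e_{1}}$ is meromorphic at $t_{0}$, hence has a genuine Laurent expansion whose pole order and residue are then pinned down. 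To close your argument you would either have to invoke that meromorphy (at which point you are back on the paper's route) or prove an existence/uniqueness statement of Briot--Bouquet type for your singular equation in $Q$. A minor slip: $s^{3/2}\cdot p^{2j+1}$ produces $s^{j+2}Q^{2j+1}$, not $s^{2j+2}Q^{2j+1}$; this does not affect the parity argument since both exponents are integers.
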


Theorem \ref{thm-II-9 copy(1)} will be proven in Section 3. In Section 4, we
will give another application of our isomonodromy theory (see Corollary
\ref{corcor}). More precisely, we will establish a one to one correspondence
between the generalized Lam\'{e} equation and the Fuchsian equation on
$\mathbb{CP}^{1}$. Furthermore, we will prove that if one of them is monodromy
preserving then so is the other one. We remark that all the results above have
important applications in our coming paper \cite{Chen-Kuo-Lin}. For example,
Theorem \ref{thm-II-9 copy(1)} can be used to study the converge of even
solutions of the mean field equation (\ref{501}) as $p\left(  \tau \right)
\rightarrow0$ when $\tau \rightarrow \tau_{0}$.\medskip

We conclude this section by comparing our result Theorem \ref{theorem1-2} with
the paper \cite{Kawai} by Kawai. Define the Fuchsian equation in $E_{\tau}$ by%
\begin{equation}
y^{\prime \prime}\left(  z\right)  =q\left(  z\right)  y\left(  z\right)
,\label{OK}%
\end{equation}
where%
\begin{align}
q\left(  z\right)  = &  L+\sum_{i=0}^{m}\left[  H_{i}\zeta \left(  z-t_{i}%
|\tau \right)  +\frac{1}{4}\left(  \theta_{i}^{2}-1\right)  \wp \left(
z-t_{i}|\tau \right)  \right]  \label{OK-1}\\
&  +\sum_{\alpha=0}^{m}\left[  -\mu_{\alpha}\zeta \left(  z-b_{\alpha}%
|\tau \right)  +\frac{3}{4}\wp \left(  z-b_{\alpha}|\tau \right)  \right]
\nonumber
\end{align}
with%
\begin{equation}
\sum_{i=0}^{m}H_{i}-\sum_{\alpha=0}^{m}\mu_{\alpha}=0.\label{OK-2}%
\end{equation}
Here $L$, $H_{i}$, $t_{i}$, $\theta_{i}$, $\mu_{\alpha}$, $b_{\alpha}$ are
complex parameters with $t_{0}=0$. The isomonodromic deformation of equation
(\ref{OK}) was first treated by Okamoto \cite{Okamoto} without varying the
underlying elliptic curves and then generalized by Iwasaki \cite{IW} to the
case of higher genus. Let $\mathcal{R}$ be the space of conjugacy classes of
the monodromy representation of $\pi_{1}((E_{\tau}\backslash S),q_{0})$, where
$S$ denotes the set of singular points. Then it was known that $\mathcal{R}$
is a complex manifold. It was proved by Iwasaki \cite{IW} that there exists a
natural symplectic structure $\Omega$ on the space $\mathcal{R}$. In
\cite{Kawai}, Kawai considered the same Fuchsian equation (\ref{OK}) but
allowed the underlying elliptic curves to vary as well. By using the pull-back
principle to the symplectic 2-form $\Omega$, Kawai studied isomonodromic
deformations for equation (\ref{OK}) which are described as a completely
integrable Hamiltonian system: for $1\leq i\leq m,$%
\begin{equation}
\frac{\partial b_{\alpha}}{\partial t_{i}}=\sum_{i=1}^{m}\frac{\partial H_{i}%
}{\partial \mu_{\alpha}}\text{, }\frac{\partial b_{\alpha}}{\partial \tau}%
=\frac{\partial \mathcal{H}}{\partial \mu_{\alpha}}\text{, }\frac{\partial
\mu_{\alpha}}{\partial t_{i}}=-\sum_{i=1}^{m}\frac{\partial H_{i}}{\partial
b_{\alpha}}\text{, }\frac{\partial \mu_{\alpha}}{\partial \tau}=-\frac
{\partial \mathcal{H}}{\partial b_{\alpha}},\label{Ham}%
\end{equation}
where%
\[
\mathcal{H}=\frac{1}{2\pi i}\left[  L+\eta_{1}(\tau)\left(  \sum_{\alpha
=0}^{m}b_{\alpha}\mu_{\alpha}-\sum_{i=1}^{m}t_{i}H_{i}\right)  \right]  .
\]
Now considering the simplest case $m=0$ and by using (\ref{OK-2}) and
$t_{0}=0$, the potential $q(z)$ takes the simple form (the subscript $0$ is
dropped for simplicity)%
\begin{equation}
q\left(  z\right)  =L+\mu \zeta \left(  z|\tau \right)  +\frac{1}{4}\left(
\theta^{2}-1\right)  \wp \left(  z|\tau \right)  -\mu \zeta \left(  z-b|\tau
\right)  +\frac{3}{4}\wp \left(  z-b|\tau \right)  .\label{potential}%
\end{equation}
Consequently, the Hamiltonian system (\ref{Ham}) is reduced to%
\begin{equation}
\left \{
\begin{array}
[c]{l}%
\frac{db}{d\tau}=\frac{-i}{2\pi}\left[  2\mu-\zeta(b|\tau)+b\eta_{1}\right]
,\\
\\
\frac{d\mu}{d\tau}=\frac{i}{2\pi}\left[  \mu \wp(b|\tau)+\mu \eta_{1}-\frac
{1}{4}(\theta^{2}-1)\wp^{\prime}(b|\tau)\right]  .
\end{array}
\right.  \label{Ham-1}%
\end{equation}
Furthermore, the Hamiltonian system (\ref{Ham-1}) is equivalent to%
\[
\frac{d^{2}}{d\tau^{2}}\left(  \frac{b}{2}\right)  =-\frac{1}{4\pi^{2}}%
\sum_{k=0}^{3}\frac{\theta^{2}}{32}\wp^{\prime}\left(  \frac{b}{2}%
+\frac{\omega_{k}}{2}|\tau \right)  ,
\]
which implies that $\frac{b}{2}$ satisfies the elliptic form (\ref{124}) with
$\left(  \alpha_{0},\alpha_{1},\alpha_{2},\alpha_{3}\right)  =(\frac
{\theta^{2}}{32},\frac{\theta^{2}}{32},\frac{\theta^{2}}{32},\frac{\theta^{2}%
}{32})$; see \cite[Theorem 3]{Kawai}. It is clear that our potential $I\left(
z\right)  $ is \textit{different from} (\ref{potential}) except for
$n_{k}=0,k=0,1,2,3$ in $I\left(  z\right)  $ and $\theta=\pm2$ in
(\ref{potential}). Notice that the linear ODE (\ref{OK}) with (\ref{potential}%
) only has the apparent singularity at $b$. Thus it seems that the monodromy
representation for (\ref{OK}) could not be reduced to $\pi_{1}\left(  E_{\tau
}\right)  $ when $\theta \not =\pm2$. However, when $n_{k}\in \mathbb{N}%
\cup \left \{  0\right \}  $ for all $k$, the monodromy representation for
(\ref{89-0}) could be simplified. We remark it is an advantage when we study
the elliptic form (\ref{124}) with $\alpha_{k}=\frac{1}{2}\left(  n_{k}%
+\frac{1}{2}\right)  ^{2},\ k=0,1,2,3$. From Kawai's result in \cite{Kawai}
and ours, it can be seen that the elliptic form (\ref{124}) governs
isomonodromic deformations of different linear ODEs (e.g. (\ref{OK}) with
(\ref{potential}) and (\ref{89-0})). Therefore, it is important to choose a
suitable linear ODE when generic parameters\textit{ }are considered.

\section{Painlev\'{e} VI and Hamiltonian system on the moduli space}

In this section, we want to develop an isomonodromy theory on the moduli space
of elliptic curves. For this purpose, there are two fundamental issues needed
to be discussed: (i) to derive the Hamiltonian system for the isomonodromic
deformation of the generalized Lam\'{e} equation (\ref{89-0}) with
$n_{i}\not \in \frac{1}{2}+\mathbb{Z}$, $i=0,1,2,3$; (ii) to prove the
equivalence between the Hamiltonian system and the elliptic form (\ref{124}).
We remark that the (ii) part holds true without any condition. Recall the
generalized Lam\'{e} equation defined by
\begin{equation}
y^{\prime \prime}=I\left(  z;\tau \right)  y, \label{152}%
\end{equation}
where%
\begin{align}
I\left(  z;\tau \right)  =  &  \sum_{i=0}^{3}n_{i}\left(  n_{i}+1\right)
\wp \left(  z+\frac{\omega_{i}}{2}|\tau \right)  +\frac{3}{4}\left(  \wp \left(
z+p|\tau \right)  +\wp \left(  z-p|\tau \right)  \right) \nonumber \\
&  +A\left(  \zeta \left(  z+p|\tau \right)  -\zeta \left(  z-p|\tau \right)
\right)  +B, \label{154}%
\end{align}
and $p\left(  \tau \right)  \not \in E_{\tau}\left[  2\right]  $. By replacing
$n_{i}$ by $-n_{i}-1$ if necessary, we always assume $n_{i}\geq-\frac{1}{2}$
for all $i$. Remark that, since we assume $n_{i}\not \in \frac{1}%
{2}+\mathbb{Z}$, the exponent difference of (\ref{152}) at $\frac{\omega_{i}%
}{2}$ is $2n_{i}+1\not \in 2\mathbb{Z}$, implying that (\ref{152}) has no
logarithmic singularity at $\frac{\omega_{i}}{2}$, $0\leq i\leq3$.

For equation (\ref{152}), the necessary and sufficient condition for apparent
singularity at $\pm p$ is given by

\begin{lemma}
\label{lem-apparent}$\pm p$ are apparent singularities of (\ref{152}) iff $A$
and $B$ satisfy%
\begin{equation}
B=A^{2}-\zeta \left(  2p\right)  A-\frac{3}{4}\wp \left(  2p\right)  -\sum
_{i=0}^{3}n_{i}\left(  n_{i}+1\right)  \wp \left(  p+\frac{\omega_{i}}%
{2}\right)  . \label{101}%
\end{equation}

\end{lemma}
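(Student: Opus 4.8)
The plan is to analyze the local behavior of solutions of (\ref{152}) at the regular singular point $z=p$ (the point $z=-p$ being handled identically by symmetry, since the potential $I(z;\tau)$ is even). The exponent difference at $\pm p$ is $2$, so the indicial equation has roots differing by an integer, and the generic danger is a logarithmic term in the second solution. The singularity is \emph{apparent} precisely when no such logarithm appears, i.e.\ when both Frobenius solutions are single-valued (meromorphic) near $p$. First I would expand the potential near $z=p$. Writing $w=z-p$, the only parts of $I$ that are singular at $p$ are $\tfrac{3}{4}\wp(z-p)$ and $A\zeta(z-p)$; the remaining terms $\sum_i n_i(n_i+1)\wp(z+\tfrac{\omega_i}{2})+\tfrac34\wp(z+p)-A\zeta(z+p)+B$ are holomorphic at $p$ and I would Taylor-expand them. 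Using the Laurent expansions $\wp(w)=w^{-2}+O(w^2)$ and $\zeta(w)=w^{-1}+O(w^3)$ near $0$, the local form of the potential is
\begin{equation}
I(z;\tau)=\frac{3/4}{w^{2}}+\frac{A}{w}+c_{0}+c_{1}w+O(w^{2}),\nonumber
\end{equation}
where $c_{0},c_{1}$ are explicit combinations of $A$, $B$, and the values and derivatives at $p$ of the regular part.

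**The Frobenius computation.** The indicial equation for $y''=I y$ with leading coefficient $\tfrac34 w^{-2}$ is $\rho(\rho-1)=\tfrac34$, giving exponents $\rho_{+}=\tfrac32$ and $\rho_{-}=-\tfrac12$, whose difference is $2\in\mathbb{Z}_{>0}$. I would construct the Frobenius solution belonging to the \emph{smaller} exponent $\rho_{-}=-\tfrac12$, writing $y=w^{-1/2}\sum_{j\ge0}a_{j}w^{j}$ with $a_{0}=1$, and substitute into the ODE to obtain a recursion for the $a_{j}$. The obstruction to a logarithm-free (hence single-valued, since $w^{-1/2}$ times a power series in $w$ has trivial monodromy $-1$ that is absorbed projectively) solution occurs at the step $j=2$, where the recursion coefficient multiplying $a_{2}$ vanishes (because $\rho_{-}+2=\rho_{+}$). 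The solvability condition at that step — that the corresponding forced term also vanish — is exactly one scalar equation in $A,B$ and the local data. I expect this condition to reduce, after simplification, to the claimed relation (\ref{101}). The main obstacle, and the one place genuine care is required, is carrying the two lower-order coefficients $c_{0},c_{1}$ far enough in the expansion: the compatibility condition at $j=2$ involves $a_{1}$ and the forcing from $c_{1}w$, so I must compute $c_{1}$, which demands the expansions of $\wp$ and $\zeta$ to one more order than naively expected and the derivatives of the regular part at $p$.

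**Identifying the constants and finishing.** To evaluate $c_{0},c_{1}$ cleanly I would use the standard expansions $\wp(p+w)=\wp(p)+\wp'(p)w+O(w^{2})$, $\zeta(p+w)=\zeta(p)-\wp(p)w+O(w^{2})$, together with the addition-formula consequences $\wp(2p)$ and $\zeta(2p)$ that appear when combining the contributions of the $\wp(z+p)$ and $\zeta(z+p)$ terms evaluated at $z=p$. The appearance of $\zeta(2p)$, $\wp(2p)$, and $\sum_i n_i(n_i+1)\wp(p+\tfrac{\omega_i}{2})$ in (\ref{101}) signals exactly this bookkeeping. After assembling the $j=2$ solvability condition and clearing the expansion data, the resulting identity should be precisely
\begin{equation}
B=A^{2}-\zeta(2p)A-\tfrac{3}{4}\wp(2p)-\sum_{i=0}^{3}n_{i}(n_{i}+1)\wp\!\left(p+\tfrac{\omega_{i}}{2}\right),\nonumber
\end{equation}
which is (\ref{101}). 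For the converse, since the condition I derive is both necessary and sufficient for the $j=2$ step to be solvable without a logarithm, and since all higher recursion steps $j\ge3$ have nonvanishing coefficients (the exponent difference is exactly $2$, so only the single step $j=2$ is obstructed), the vanishing of the one obstruction guarantees a full logarithm-free local solution basis; hence (\ref{101}) is equivalent to $+p$ being apparent, and by the evenness of $I$ the same relation governs $-p$. This yields the "iff" in both directions with no further case analysis.
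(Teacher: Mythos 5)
Your proof is correct in structure but takes a genuinely different route from the paper. You run the standard Frobenius analysis directly on the linear equation $y''=Iy$ at $z=p$: exponents $3/2$ and $-1/2$, obstruction at the step $j=2$ of the recursion for the smaller exponent, and that single solvability condition is (\ref{101}). The paper instead works with the ratio $f=y_1/y_2$ of two solutions and its Schwarzian derivative: it expands $f=c_0+c_2(z-p)^2+\cdots$, sets $v=\log f'$, and matches the $(z-p)^{-1}$ and $(z-p)^0$ coefficients in $\{f;z\}=-2I$, obtaining (\ref{79}) and (\ref{80}), which combine to (\ref{101}). The two computations are equivalent (the cancellation of $\tilde e_1$ in the paper's constant-term equation is exactly your vanishing recursion coefficient at $j=2$), but the Schwarzian formulation fits the paper's broader framework of developing maps for the mean field equation, while yours is the more elementary textbook argument for apparent singularities. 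Two small inaccuracies in your sketch, neither fatal: the residue of $I$ at $p$ is $-A$, not $+A$ (compare (\ref{391})), though since the obstruction reads $c_0=r^2$ with $r$ the residue, the sign drops out of the final relation; and the $j=2$ compatibility condition involves only the residue and the constant term $c_0$ of the potential, not the coefficient $c_1$ of $w$, so the extra order of expansion you brace for is not actually needed. Your sufficiency argument (only $j=2$ is obstructed, since $(\rho_-+j)(\rho_-+j-1)-\tfrac34=j(j-2)\neq0$ for $j\geq3$) and the reduction of $-p$ to $p$ by evenness of $I$ both match the paper's logic.
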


\begin{proof}
It suffices to prove this lemma for the point $p$. Let $y_{i}$, $i=1,2$, be
two linearly independent solutions to (\ref{152}). Define $f\doteqdot
\frac{y_{1}}{y_{2}}$ as a ratio of two independent solutions and
$v\doteqdot \log f^{\prime}$. Then%
\begin{equation}
\{f;z\}=v^{\prime \prime}-\frac{1}{2}\left(  v^{\prime}\right)  ^{2}=-2I(z).
\label{78}%
\end{equation}
It is obvious that (\ref{152}) has no solutions with logarithmic singularity
at $p$ iff $f\left(  z\right)  $ has no logarithmic singularity at $p$. First
we prove the necessary part. Without loss of generality, we may assume
$f\left(  z\right)  $ is holomorphic at $p$. The local expansion of $f$ at $p$
is:%
\[
f\left(  z\right)  =c_{0}+c_{2}\left(  z-p\right)  ^{2}+\cdot \cdot \cdot,
\]%
\begin{equation}
v\left(  z\right)  =\log f^{\prime}\left(  z\right)  =\log2c_{2}+\log \left(
z-p\right)  +\sum_{j\geq1}d_{j}\left(  z-p\right)  ^{j}, \label{78-0}%
\end{equation}%
\begin{equation}
v^{\prime}\left(  z\right)  =\frac{1}{z-p}+\sum_{j\geq0}\tilde{e}_{j}\left(
z-p\right)  ^{j}, \label{78-1}%
\end{equation}%
\[
v^{\prime \prime}\left(  z\right)  =\frac{-1}{\left(  z-p\right)  ^{2}}%
+\sum_{j\geq0}\left(  j+1\right)  \tilde{e}_{j+1}\left(  z-p\right)  ^{j},
\]
where $\tilde{e}_{j}=\left(  j+1\right)  d_{j+1}$. Thus,{\allowdisplaybreaks%
\begin{align}
v^{\prime \prime}-\frac{1}{2}\left(  v^{\prime}\right)  ^{2}=  &  \frac
{-1}{\left(  z-p\right)  ^{2}}+\sum_{j\geq0}\left(  j+1\right)  \tilde
{e}_{j+1}\left(  z-p\right)  ^{j}\label{78-2}\\
&  -\frac{1}{2}\Big[\frac{1}{z-p}+\sum_{j\geq0}\tilde{e}_{j}\left(
z-p\right)  ^{j}\Big]^{2}.\nonumber
\end{align}
}Recalling $I(z)$ in (\ref{154}), we compare both sides of (\ref{78}). The
$\left(  z-p\right)  ^{-2}$ terms match automatically. For the $\left(
z-p\right)  ^{-1}$ term, we get%
\begin{equation}
-\tilde{e}_{0}=2A. \label{79}%
\end{equation}
For the $\left(  z-p\right)  ^{0}$, i.e. the constant term, we
have{\allowdisplaybreaks%
\begin{align}
&  \tilde{e}_{1}-\frac{2}{2}\tilde{e}_{1}-\frac{1}{2}\tilde{e}_{0}%
^{2}\label{80}\\
=  &  -2\sum_{i=0}^{3}n_{i}\left(  n_{i}+1\right)  \wp \left(  p+\frac
{\omega_{i}}{2}\right)  -\frac{3}{2}\wp \left(  2p\right)  -2A\zeta \left(
2p\right)  -2B.\nonumber
\end{align}
}Then (\ref{101}) follows from (\ref{79}) and (\ref{80}) immediately.

For the sufficient part, if (\ref{101}) holds, then $\tilde{e}_{0}$ is given
by (\ref{79}). By any choice of $\tilde{e}_{1}$ and comparing (\ref{78}) and
(\ref{78-2}), $\tilde{e}_{j}$ is determined for all $j\geq2$. Then it follows
from (\ref{78-0})-(\ref{78-1}) that $f\left(  z\right)  $ is holomorphic at
$p$. Since its Schwarzian derivative satisfies (\ref{78}), $f$ is a ratio of
two linearly independent solutions of (\ref{152}). This implies that
(\ref{152}) has no solutions with logarithmic singularity at $p$, namely $p$
is an apparent singularity.
\end{proof}

\subsection{Isomonodromic equation and Hamiltonian system}

The 2nd order generalized Lam\'{e} equation (\ref{152}) can be written into a
1st order linear system%
\begin{equation}
\frac{d}{dz}Y=Q\left(  z;\tau \right)  Y\text{ \ in }E_{\tau},\label{153}%
\end{equation}
where%
\begin{equation}
Q\left(  z;\tau \right)  =\left(
\begin{matrix}
0 & 1\\
I\left(  z;\tau \right)   & 0
\end{matrix}
\right)  .\label{176}%
\end{equation}
The isomonodromic deformation of the generalized Lam\'{e} equation (\ref{152})
is equivalent to the isomonodromic deformation of the linear system
(\ref{153}). Let $y_{1}\left(  z;\tau \right)  $ and $y_{2}\left(
z;\tau \right)  $ be two linearly independent solutions of (\ref{152}), then
$Y\left(  z;\tau \right)  =\left(
\begin{matrix}
y_{1}\left(  z;\tau \right)   & y_{2}\left(  z;\tau \right)  \\
y_{1}^{\prime}\left(  z;\tau \right)   & y_{2}^{\prime}\left(  z;\tau \right)
\end{matrix}
\right)  $ is a fundamental system of solutions to (\ref{153}). In general,
$Y\left(  z;\tau \right)  $ is multi-valued with respect to $z$ and for each
$\tau \in \mathbb{H}$, $Y\left(  z;\tau \right)  $ might have branch points at
$S\doteqdot \left \{  \pm p,\frac{\omega_{k}}{2}|\text{ }k=0,1,2,3\right \}  $.
The fundamental solution $Y\left(  z;\tau \right)  $ is called $M$-invariant
($M$ stands for monodromy) if there is some $q_{0}\in E_{\tau}\backslash S$
and for any loop $\ell \in \pi_{1}(E_{\tau}\backslash S$, $q_{0})$, there exists
$\rho \left(  \ell \right)  \in SL\left(  2,\mathbb{C}\right)  $
\textit{independent of }$\tau$ such that%
\[
\ell^{\ast}Y\left(  z;\tau \right)  =Y\left(  z;\tau \right)  \rho \left(
\ell \right)
\]
holds for $z$ near the base point $q_{0}$. Here $\ell^{\ast}Y\left(
z;\tau \right)  $ denotes the analytic continuation of $Y\left(  z;\tau \right)
$ along $\ell$. Let $\gamma_{k}\in \pi_{1}(E_{\tau}\backslash S,q_{0}),$
$k=0,1,2,3,\pm$, be simple loops which encircle the singularties $\frac
{\omega_{k}}{2},$ $k=0,1,2,3$ and $\pm p$ once respectively, and $\ell_{j}%
\in \pi_{1}(E_{\tau}\backslash S,q_{0})$, $j=1,2$, be two fundamental cycles of
$E_{\tau}$ such that its lifting in $\mathbb{C}$ is a straight line connecting
$q_{0}$ and $q_{0}+\omega_{j}$. We also require these lines do not pass any
singularties. Of course, all the pathes do not intersect with each other
except at $q_{0}$. We note that when $\tau$ varies in a neighborhood of some
$\tau_{0}$, $\gamma_{k}$ and $\ell_{1}$ can be choosen independent of $\tau$.

Clearly the monodromy group with respect to $Y\left(  z;\tau \right)  $ is
generated by $\left \{  \rho \left(  \ell_{j}\right)  ,\rho \left(  \gamma
_{k}\right)  |\text{ }j=1,2\text{ and }k=0,1,2,3,\pm \right \}  $. Thus,
$Y\left(  z;\tau \right)  $ is $M$-invariant if and only if the matrices
$\rho \left(  \ell_{j}\right)  ,\rho \left(  \gamma_{k}\right)  $ are
independent of $\tau$. Notice that $I\left(  \cdot;\tau \right)  $ is an
elliptic function, so we can also treat (\ref{153}) as a equation defined in
$\mathbb{C}$, i.e.,%
\begin{equation}
\frac{d}{dz}Y=Q\left(  z;\tau \right)  Y\text{ \ in }\mathbb{C}. \label{153-1}%
\end{equation}
Furthermore, we can identify solutions of (\ref{153}) and (\ref{153-1}) in an
obvious way. For example, after analytic continuation, any solution
$Y(\cdot;\tau)$ of (\ref{153}) can be extended to be a solution of
(\ref{153-1}) as a multi-valued matrix function defined in $\mathbb{C}$ (still
denote it by $Y(\cdot;\tau)$). In the sequel, we always identify solutions of
(\ref{153}) and (\ref{153-1}). Then we have the following theorem:

\begin{theorem}
\label{thm M}System (\ref{153}) is monodromy preserving as $\tau$ deforms if
and only if there exists a single-valued matrix function $\Omega \left(
z;\tau \right)  $ defined in $\mathbb{C}\times \mathbb{H}$ satisfying%
\begin{equation}
\left \{
\begin{array}
[c]{l}%
\Omega \left(  z+1;\tau \right)  =\Omega \left(  z;\tau \right) \\
\Omega \left(  z+\tau;\tau \right)  =\Omega \left(  z;\tau \right)  -Q\left(
z;\tau \right)  ,
\end{array}
\right.  \label{204}%
\end{equation}
such that the following Pfaffian\emph{ }system%
\begin{equation}
\left \{
\begin{array}
[c]{l}%
\frac{\partial}{\partial z}Y(z;\tau)=Q\left(  z;\tau \right)  Y(z;\tau
)\smallskip \\
\frac{\partial}{\partial \tau}Y(z;\tau)=\Omega \left(  z;\tau \right)  Y(z;\tau)
\end{array}
\right.  \text{ \ in \ }\mathbb{C}\times \mathbb{H} \label{184}%
\end{equation}
is completely integrable.
\end{theorem}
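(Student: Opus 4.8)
The plan is to prove both directions by working with the logarithmic $\tau$-derivative of a fundamental matrix solution, which is the natural candidate for $\Omega$; this is the moduli-space analogue of the classical construction behind the Schlesinger and Fuchs--Okamoto theories. For the necessity direction, suppose (\ref{153}) is monodromy preserving and fix an $M$-invariant fundamental solution $Y(z;\tau)$, so that every monodromy matrix is independent of $\tau$. I would set
\[
\Omega(z;\tau):=\frac{\partial Y}{\partial \tau}(z;\tau)\,Y(z;\tau)^{-1},
\]
which makes the second equation of (\ref{184}) hold by construction while the first is just (\ref{152}). Complete integrability of (\ref{184}) is then automatic, since $Y$ is a common fundamental solution and the mixed partials of the analytic function $Y$ commute (equivalently, the zero-curvature relation $\partial_\tau Q-\partial_z\Omega+[Q,\Omega]=0$ holds). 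To see that $\Omega$ is single-valued, I continue it around any loop $\ell$ encircling a point of $S$: since $\ell^\ast Y=Y\rho(\ell)$ with $\rho(\ell)$ constant in $\tau$, differentiating gives $\ell^\ast\Omega=\partial_\tau(Y\rho)\,(Y\rho)^{-1}=\partial_\tau Y\,Y^{-1}=\Omega$, because $\partial_\tau\rho(\ell)=0$.

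The crux of necessity is verifying the quasi-periodicity (\ref{204}). Since $Q$ is $\Lambda_\tau$-periodic in $z$, the translate $Y(z+\omega_j;\tau)$ is again a fundamental solution, so $Y(z+\omega_j;\tau)=Y(z;\tau)N_j$ with $N_j=N_j(\tau)$ the monodromy along the cycle $\ell_j$, independent of $\tau$ by $M$-invariance. For the period $1$, the relation $\Omega(z+1;\tau)=\Omega(z;\tau)$ drops out at once because the shifted argument $z+1$ is $\tau$-free. For the period $\tau$ the subtlety is that $\tau$ appears both as the shift in the argument and as the deformation parameter; differentiating $Y(z+\tau;\tau)=Y(z;\tau)N_2$ totally in $\tau$ and applying the chain rule produces the extra term
\[
\frac{\partial Y}{\partial z}(z+\tau;\tau)=Q(z+\tau;\tau)Y(z+\tau;\tau)=Q(z;\tau)Y(z+\tau;\tau),
\]
and solving for $(\partial_\tau Y)(z+\tau;\tau)$ and dividing on the right by $Y(z+\tau;\tau)$ yields exactly $\Omega(z+\tau;\tau)=\Omega(z;\tau)-Q(z;\tau)$. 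I expect this chain-rule bookkeeping --- tracking the $-Q$ term, which is precisely what distinguishes the elliptic setting from the Fuchs--Okamoto theory on $\mathbb{CP}^1$ --- to be the main point requiring care.

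For the sufficiency direction, assume $\Omega$ as described exists. Complete integrability of (\ref{184}) furnishes, via the Frobenius theorem, a fundamental solution $Y(z;\tau)$ holomorphic in $(z,\tau)$ near a regular base point $q_0$ and analytically continuable (keeping $q_0$ and the loops fixed as $\tau$ ranges over a neighborhood, as noted in the text). It then remains to show that the generators $N_1,N_2,\rho(\gamma_k),\rho(\gamma_\pm)$ of the monodromy group are independent of $\tau$. For a small loop $\ell$ around a point of $S$ I continue $\partial_\tau Y=\Omega Y$: single-valuedness of $\Omega$ gives $\ell^\ast\Omega=\Omega$, so $\ell^\ast(\partial_\tau Y)=\Omega\,\ell^\ast Y=\Omega Y\rho=(\partial_\tau Y)\rho$, whereas directly $\ell^\ast(\partial_\tau Y)=\partial_\tau(Y\rho)=(\partial_\tau Y)\rho+Y\,\partial_\tau\rho$; comparing forces $\partial_\tau\rho=0$.

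Finally, for the two cycles I differentiate $Y(z+\omega_j;\tau)=Y(z;\tau)N_j(\tau)$ in $\tau$, now invoking the two halves of (\ref{204}). For $j=1$, the identity $\Omega(z+1;\tau)=\Omega(z;\tau)$ gives $(\partial_\tau Y)(z+1;\tau)=\Omega(z;\tau)Y(z;\tau)N_1$, and matching with $\partial_\tau Y(z;\tau)\,N_1+Y\,\partial_\tau N_1=\Omega Y N_1+Y\,\partial_\tau N_1$ yields $\partial_\tau N_1=0$. For $j=2$, the total $\tau$-derivative of the left side is $(\partial_z Y)(z+\tau;\tau)+(\partial_\tau Y)(z+\tau;\tau)=Q Y N_2+(\Omega-Q)Y N_2=\Omega Y N_2$, where the extra $\partial_z Y=QY$ term from the chain rule is cancelled exactly by the $-Q$ term in $\Omega(z+\tau;\tau)=\Omega(z;\tau)-Q(z;\tau)$; matching with the right side again gives $\partial_\tau N_2=0$. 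Hence all generators are constant in $\tau$, so the monodromy is preserved, completing the equivalence.
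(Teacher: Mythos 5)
Your proposal is correct and follows essentially the same route as the paper: define $\Omega=\partial_\tau Y\cdot Y^{-1}$ for an $M$-invariant fundamental solution, check single-valuedness by continuing around loops (using $\partial_\tau\rho=0$), and derive the quasi-periodicity (\ref{204}) from the chain-rule bookkeeping on $Y(z+\tau;\tau)=Y(z;\tau)\rho(\ell_2)$, with the converse obtained by running the same computation backwards to force $\frac{d}{d\tau}\rho(\ell_j)=\frac{d}{d\tau}\rho(\gamma_k)=0$. The only organizational difference is that the paper isolates the single-valuedness/global-extension step as a separate lemma (Lemma \ref{lemma}), while you fold it into the main argument.
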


\begin{remark}
\label{remk}The classical isomonodromy theory in $\mathbb{C}$ (see e.g.
\cite[Proposition 3.1.5]{GP}) says that system (\ref{153-1}) is monodromy
preserving if and only if there exists a single-valued matrix function
$\Omega \left(  z;\tau \right)  $ defined in $\mathbb{C}\times \mathbb{H}$ such
that (\ref{184}) is completely integrable. Theorem \ref{thm M} is the
counterpart of this classical theory in the torus $E_{\tau}$. The property
(\ref{204}) comes from the preserving of monodromy matrices $\rho \left(
\ell_{j}\right)  $, $j=1,2$ during the deformation (see from the proof of
Theorem \ref{thm M} below). Notice that $\rho \left(  \ell_{j}\right)  $ can be
considered as connection matrices along the straight line $\ell_{j}$
connecting $q_{0}$ and $q_{0}+\omega_{j}$ for system (\ref{153-1}).
\end{remark}

Notice that system (\ref{184}) is completely integrable if and only if
\begin{equation}
\frac{\partial}{\partial \tau}Q\left(  z;\tau \right)  =\frac{\partial}{\partial
z}\Omega \left(  z;\tau \right)  +\left[  \Omega \left(  z;\tau \right)  ,Q\left(
z;\tau \right)  \right]  ,\text{ and } \label{185}%
\end{equation}%
\begin{equation}
d(\Omega \left(  z;\tau \right)  d\tau)=\left[  \Omega \left(  z;\tau \right)
d\tau \right]  \wedge \left[  \Omega \left(  z;\tau \right)  d\tau \right]  ,
\label{185-1}%
\end{equation}
where $d$ denotes the exterior differentiation with respect to $\tau$ in
(\ref{185-1}). See Lemma 3.14 in \cite{GP} for the proof. Clearly
(\ref{185-1}) holds automatically since there is only one deformation
parameter. We need the following lemma to prove Theorem \ref{thm M}.

\begin{lemma}
\label{lemma}Let $Y\left(  z;\tau \right)  $ be an $M$-invariant fundamental
solution of system (\ref{153}) and define a $2\times2$ matrix-valued function
$\Omega \left(  z;\tau \right)  $ in $E_{\tau}$ by%
\begin{equation}
\Omega \left(  z;\tau \right)  =\frac{\partial}{\partial \tau}Y\cdot Y^{-1}.
\label{w}%
\end{equation}
Then $\Omega \left(  z;\tau \right)  $ can be extended to be a globally defined
matrix-valued function in $\mathbb{C\times H}$ by analytic continuation (still
denote it by $\Omega \left(  z;\tau \right)  $). In particular, (\ref{w}) holds
in $\mathbb{C\times H}$ by considering $Y\left(  z;\tau \right)  $ as a
solution of system (\ref{153-1}).
\end{lemma}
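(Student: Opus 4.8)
The plan is to exploit the hypothesis of $M$-invariance, which asserts that every monodromy matrix $\rho(\ell)$ is \emph{independent of} $\tau$; this is precisely the feature that forces the logarithmic derivative $\Omega=\partial_{\tau}Y\cdot Y^{-1}$ to be single-valued. First I would record that $\Omega$ is genuinely well defined near the base point $q_{0}$: since $\operatorname{tr}Q\equiv0$, the determinant $\det Y$ is a nonzero constant in $z$ (the Wronskian), so $Y^{-1}$ exists, and by the holomorphic dependence of solutions of (\ref{153}) on the parameter $\tau$, both $\partial_{\tau}Y$ and $Y^{-1}$ are holomorphic in $(z,\tau)$ in a neighborhood of $q_{0}$. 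Hence $\Omega(z;\tau)$ is a well-defined holomorphic matrix function there, which I then propagate by analytic continuation along paths in $\mathbb{C}\setminus\tilde{S}$, where $\tilde{S}\doteqdot S+\Lambda_{\tau}$ denotes the full set of branch points of $Y$ in $\mathbb{C}$.

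The core of the argument is to show that continuation of $\Omega$ around any closed loop returns it to itself. Let $\ell$ be a loop based at $q_{0}$ in $\mathbb{C}\setminus\tilde{S}$. Identifying solutions of (\ref{153}) and (\ref{153-1}), the continuation of $Y$ along $\ell$ is governed by the projected loop in $E_{\tau}\setminus S$, so $\ell^{\ast}Y=Y\rho(\ell)$ with $\rho(\ell)\in SL(2,\mathbb{C})$, and by $M$-invariance $\rho(\ell)$ does not depend on $\tau$. Since analytic continuation commutes with $\partial_{\tau}$, differentiating $\ell^{\ast}Y=Y\rho(\ell)$ gives $\partial_{\tau}(\ell^{\ast}Y)=(\partial_{\tau}Y)\rho(\ell)$, whence
\begin{equation*}
\ell^{\ast}\Omega=\partial_{\tau}(\ell^{\ast}Y)\cdot(\ell^{\ast}Y)^{-1}=(\partial_{\tau}Y)\,\rho(\ell)\,\rho(\ell)^{-1}Y^{-1}=(\partial_{\tau}Y)Y^{-1}=\Omega.
\end{equation*}
Thus $\Omega$ is invariant under every element of $\pi_{1}(\mathbb{C}\setminus\tilde{S},q_{0})$ and therefore extends to a single-valued holomorphic matrix function on $\mathbb{C}\setminus\tilde{S}$.

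Finally I would upgrade single-valuedness on $\mathbb{C}\setminus\tilde{S}$ to a global statement on $\mathbb{C}\times\mathbb{H}$. Near each point of $\tilde{S}$ the function $\Omega$ is now single-valued, and since the system is Fuchsian (regular singular) at these points, $\Omega$ can have at worst a pole there; hence it extends to a meromorphic, i.e.\ globally defined, matrix function on $\mathbb{C}\times\mathbb{H}$, and the defining identity (\ref{w}) continues to hold throughout when $Y$ is viewed as a solution of (\ref{153-1}). The one point demanding care is the bookkeeping of the monodromy factor: I must make sure that $\rho(\ell)$ is genuinely the \emph{same} matrix before and after applying $\partial_{\tau}$, which is exactly guaranteed by the $\tau$-independence built into the definition of $M$-invariance; absent that hypothesis the cancellation above would fail and $\Omega$ would acquire nontrivial monodromy. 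This is the step I expect to be the main obstacle to phrase cleanly.
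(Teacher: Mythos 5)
Your argument is correct and is essentially identical to the paper's proof: both rest on the single computation $\ell^{\ast}\Omega=(\partial_{\tau}Y)\rho(\ell)\rho(\ell)^{-1}Y^{-1}=\Omega$, where the cancellation is valid precisely because $M$-invariance makes $\rho(\ell)$ independent of $\tau$. Your additional remarks (nonvanishing of the Wronskian near $q_{0}$, and at-worst-pole behaviour at the branch points) are harmless extras; the latter is in fact deferred in the paper to Lemma \ref{lem4-5}.
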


\begin{proof}
The proof is the same as that in the classical isomonodromy theory in
$\mathbb{C}$. Indeed, since $Y\left(  z;\tau \right)  $ is $M$-invariant, we
have{\allowdisplaybreaks
\begin{align}
\gamma_{k}^{\ast}\Omega \left(  z;\tau \right)   &  =\gamma_{k}^{\ast}\left(
\frac{\partial}{\partial \tau}Y\cdot Y^{-1}\right)  =\frac{\partial}%
{\partial \tau}\gamma_{k}^{\ast}Y\cdot \gamma_{k}^{\ast}Y^{-1}\label{qqq6}\\
&  =\frac{\partial}{\partial \tau}\left(  Y\rho \left(  \gamma_{k}\right)
\right)  \cdot \rho \left(  \gamma_{k}\right)  ^{-1}Y^{-1}\nonumber \\
&  =\frac{\partial}{\partial \tau}Y\cdot Y^{-1}=\Omega \left(  z;\tau \right)
\nonumber
\end{align}
}for $k=0,1,2,3,\pm$, namely $\Omega \left(  \cdot;\tau \right)  $ is invariant
under the analytic continuation along $\gamma_{k}$. Thus, $\Omega \left(
\cdot;\tau \right)  $ is single-valued in any fundamental domain of $E_{\tau}$
for each $\tau$. Then for each $\tau \in \mathbb{H}$, we could extend
$\Omega \left(  z;\tau \right)  $ to be a globally defined matrix-valued
function in $\mathbb{C}$ by analytic continuation.
\end{proof}

From now on, we consider equation (\ref{153}) defined in $\mathbb{C}$, i.e.,
(\ref{153-1}). The analytic continuation along any curve in $\mathbb{C}$
always keep the relation (\ref{w}) between $Y\left(  z;\tau \right)  $ and
$\Omega \left(  z;\tau \right)  $.

\begin{proof}
[Proof of Theorem \ref{thm M}]First we prove the necessary part. Let $Y\left(
z;\tau \right)  $ be an $M$-invariant fundamental solution of system
(\ref{153}) and define $\Omega \left(  z;\tau \right)  $ by $Y\left(
z;\tau \right)  $. By Lemma \ref{lemma}, $\Omega \left(  z;\tau \right)  $ is a
single-valued matrix function in $\mathbb{C}\times \mathbb{H}$ and $Y(z;\tau)$
is a solution of (\ref{184}), which implies (\ref{185}). Hence the
Pfaffian\emph{ }system (\ref{184}) is completely integrable.

It suffices to prove that $\Omega(z;\tau)$ satisfies (\ref{204}). Note that
$\Omega(z;\tau)$ is single-valued in $\mathbb{C\times H}$. Therefore, to prove
(\ref{204}), we only need to prove its validity in a small neighborhood
$U_{q_{0}}\times V_{\tau_{0}}$ of some $(q_{0},\tau_{0})$, where $q_{0}$ is
the base point. By considering $Y\left(  z;\tau \right)  $ as a solution of
system (\ref{153-1}), we see from Remark \ref{remk} and Lemma \ref{lemma}
that, for $(z,\tau)\in U_{q_{0}}\times V_{\tau_{0}}$,%
\begin{equation}
Y(z+\omega_{i};\tau)=Y(z;\tau)\rho \left(  \ell_{i}\right)  , \label{1001}%
\end{equation}%
\begin{equation}
\Omega \left(  z;\tau \right)  =\frac{\partial}{\partial \tau}Y(z;\tau)\cdot
Y(z;\tau)^{-1}, \label{1002}%
\end{equation}%
\begin{equation}
\Omega \left(  z+\omega_{i};\tau \right)  =\frac{\partial}{\partial \tau
}Y(z+\omega_{i};\tau)\cdot Y(z+\omega_{i};\tau)^{-1}. \label{1003}%
\end{equation}
Therefore, (\ref{1001}) and (\ref{1003}) give{\allowdisplaybreaks
\begin{align*}
&  \Omega \left(  z+\omega_{i};\tau \right) \\
&  =\left[  \frac{d}{d\tau}Y\left(  z+\omega_{i};\tau \right)  -\frac{\partial
}{\partial z}Y\left(  z+\omega_{i};\tau \right)  \frac{d}{d\tau}\omega
_{i}\right]  \cdot Y\left(  z+\omega_{i};\tau \right)  ^{-1}\\
&  =\left[  \frac{d}{d\tau}\left(  Y\left(  z;\tau \right)  \rho \left(
\ell_{i}\right)  \right)  -\frac{\partial}{\partial z}\left(  Y\left(
z;\tau \right)  \rho \left(  \ell_{i}\right)  \right)  \frac{d}{d\tau}\omega
_{i}\right]  \cdot \left(  Y\left(  z;\tau \right)  \rho \left(  \ell_{i}\right)
\right)  ^{-1}.
\end{align*}
}Since $\rho \left(  \ell_{i}\right)  ,i=1,2$, are independent of $\tau,$ we
have%
\[
\Omega \left(  z+1;\tau \right)  =\frac{d}{d\tau}Y\left(  z;\tau \right)  \cdot
Y\left(  z;\tau \right)  ^{-1}=\text{$\Omega \left(  z;\tau \right)  ,$}%
\]
and%
\begin{align*}
\Omega \left(  z+\tau;\tau \right)   &  =\frac{d}{d\tau}Y\left(  z;\tau \right)
\cdot Y\left(  z;\tau \right)  ^{-1}-\frac{\partial}{\partial z}Y\left(
z;\tau \right)  \cdot Y\left(  z;\tau \right)  ^{-1}\\
&  =\text{$\Omega \left(  z;\tau \right)  $}-Q\left(  z;\tau \right)  .
\end{align*}
This proves (\ref{204}).

Conversely, suppose there exists a single-valued matrix function
$\Omega \left(  z;\tau \right)  $ in $\mathbb{C}\times \mathbb{H}$ satisfying
(\ref{204}) such that (\ref{184}) is completely integrable. Let $Y\left(
z;\tau \right)  $ be a solution of the Pfaffian\emph{ }system (\ref{184}). Then
(\ref{1001})-(\ref{1003}) hold and $Y\left(  z;\tau \right)  $ satisfies system
(\ref{153}) in $E_{\tau}$. Hence%
\[
\frac{\partial}{\partial \tau}Y\left(  z+\omega_{i};\tau \right)  =\frac
{d}{d\tau}Y\left(  z+\omega_{i};\tau \right)  -\frac{\partial}{\partial
z}Y\left(  z+\omega_{i};\tau \right)  \frac{d}{d\tau}\omega_{i},
\]
which implies{\allowdisplaybreaks
\begin{align}
\frac{\partial}{\partial \tau}Y\left(  z+1;\tau \right)   &  =\frac{d}{d\tau
}\left(  Y\left(  z;\tau \right)  \rho \left(  \ell_{1}\right)  \right)
\label{qqq2}\\
&  =\frac{\partial}{\partial \tau}Y\left(  z;\tau \right)  \cdot \rho \left(
\ell_{1}\right)  +Y\left(  z;\tau \right)  \frac{d}{d\tau}\rho \left(  \ell
_{1}\right) \nonumber \\
&  =\text{$\Omega \left(  z;\tau \right)  $}Y\left(  z;\tau \right)  \rho \left(
\ell_{1}\right)  +Y\left(  z;\tau \right)  \frac{d}{d\tau}\rho \left(  \ell
_{1}\right) \nonumber
\end{align}
}and {\allowdisplaybreaks
\begin{align}
&  \frac{\partial}{\partial \tau}Y\left(  z+\tau;\tau \right) \label{qqq3}\\
&  =\frac{d}{d\tau}\left(  Y\left(  z;\tau \right)  \rho \left(  \ell
_{2}\right)  \right)  -\frac{\partial}{\partial z}\left(  Y\left(
z;\tau \right)  \rho \left(  \ell_{2}\right)  \right) \nonumber \\
&  =\frac{\partial}{\partial \tau}Y\left(  z;\tau \right)  \cdot \rho \left(
\ell_{2}\right)  +Y\left(  z;\tau \right)  \frac{d}{d\tau}\rho \left(  \ell
_{2}\right)  -\frac{\partial}{\partial z}Y\left(  z;\tau \right)  \cdot
\rho \left(  \ell_{2}\right) \nonumber \\
&  =\left[  \text{$\Omega \left(  z;\tau \right)  -Q\left(  z;\tau \right)  $%
}\right]  Y\left(  z;\tau \right)  \rho \left(  \ell_{2}\right)  +Y\left(
z;\tau \right)  \frac{d}{d\tau}\rho \left(  \ell_{2}\right)  .\nonumber
\end{align}
}On the other hand, by (\ref{1001}) and (\ref{1003}), we also have%
\begin{equation}
\frac{\partial}{\partial \tau}Y\left(  z+\omega_{i};\tau \right)  =\text{$\Omega
\left(  z+\omega_{i};\tau \right)  $}Y\left(  z;\tau \right)  \rho \left(
\ell_{i}\right)  . \label{qqq4}%
\end{equation}
Then by (\ref{qqq2}), (\ref{qqq3}), (\ref{qqq4}) and (\ref{204}), we have%
\[
Y\left(  z;\tau \right)  \frac{d}{d\tau}\rho \left(  \ell_{1}\right)  =Y\left(
z;\tau \right)  \frac{d}{d\tau}\rho \left(  \ell_{2}\right)  =0.
\]
Also, by the same argument as (\ref{qqq6}), we could prove%
\[
Y\left(  z;\tau \right)  \frac{d}{d\tau}\rho \left(  \gamma_{k}\right)
=0\text{, }k=0,1,2,3,\pm.
\]
Because of $\det Y\not =0$, we conclude that%
\[
\frac{d}{d\tau}\rho \left(  \ell_{j}\right)  =\frac{d}{d\tau}\rho \left(
\gamma_{k}\right)  =0.
\]
Thus, $Y$ is an $M$-invariant solution of (\ref{153}). That is, system
(\ref{153}) is monodromy preserving. This completes the proof.
\end{proof}

Write $\Omega \left(  z;\tau \right)  =\left(
\begin{matrix}
\Omega_{11} & \Omega_{12}\\
\Omega_{21} & \Omega_{22}%
\end{matrix}
\right)  $. Since $Q\left(  z;\tau \right)  $ has the special form (\ref{176}),
by a straightforward computation, the integrability condition (\ref{185}) is
equivalent to%
\begin{equation}
\Omega_{12}^{\prime \prime \prime}-4I\Omega_{12}^{\prime}-2I^{\prime}\Omega
_{12}+2\frac{\partial}{\partial \tau}I=0\text{ \ in }\mathbb{C\times
H},\label{186}%
\end{equation}
where we denote $^{\prime}=\frac{\partial}{\partial z}$ to be the partial
derivative with respect to the variable $z$. This computation is the same as
the case in $\mathbb{C}$ (see e.g. \cite[Proposition 3.5.1]{GP}), so we omit
the details. Then we have the following fundamental theorem for isomonodromic
deformations of (\ref{153}) in the moduli space of elliptic curves:

\begin{theorem}
\label{thm M1}System (\ref{153}) is monodromy preserving as $\tau$ deforms if
and only if there exists a single-valued solution $\Omega_{12}\left(
z;\tau \right)  $ to (\ref{186}) satisfying
\begin{align}
\Omega_{12}\left(  z+1;\tau \right)   &  =\Omega_{12}\left(  z;\tau \right)
,\label{qqq}\\
\Omega_{12}\left(  z+\tau;\tau \right)   &  =\Omega_{12}\left(  z;\tau \right)
-1.\nonumber
\end{align}

\end{theorem}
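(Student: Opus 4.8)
The plan is to deduce Theorem \ref{thm M1} from Theorem \ref{thm M} by reducing the matrix object $\Omega(z;\tau)$ to its single entry $\Omega_{12}(z;\tau)$. The crucial algebraic observation is that the integrability condition (\ref{185}), written entrywise for the special matrix $Q$ in (\ref{176}), determines the remaining entries $\Omega_{11},\Omega_{21},\Omega_{22}$ explicitly in terms of $\Omega_{12}$ and its $z$-derivatives, leaving exactly one scalar constraint, namely (\ref{186}). Concretely, comparing the $(1,1)$, $(1,2)$ and $(2,2)$ entries of (\ref{185}) yields $\Omega_{21}=\Omega_{11}'+I\,\Omega_{12}$, $\Omega_{22}=\Omega_{11}+\Omega_{12}'$ and $2\Omega_{11}'+\Omega_{12}''=0$; after imposing the trace-free normalization $\Omega_{11}+\Omega_{22}=0$ (which is harmless, since adding $g(\tau)$ times the identity changes $\Omega$ by a central term that affects neither (\ref{185}) nor (\ref{204}) nor the monodromy) one obtains
\[
\Omega_{11}=-\tfrac12\Omega_{12}',\quad \Omega_{22}=\tfrac12\Omega_{12}',\quad \Omega_{21}=-\tfrac12\Omega_{12}''+I\,\Omega_{12}.
\]
The surviving $(2,1)$ entry of (\ref{185}) then collapses to precisely (\ref{186}).

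For the necessity direction I would simply invoke Theorem \ref{thm M}: monodromy preservation supplies a single-valued $\Omega(z;\tau)$ satisfying (\ref{204}) for which (\ref{184}), equivalently (\ref{185}), is completely integrable. Reading off the $(1,2)$ entry of (\ref{185}) gives that $\Omega_{12}$ solves (\ref{186}), while the $(1,2)$ entries of the two identities in (\ref{204}), together with $Q_{12}\equiv 1$, give exactly $\Omega_{12}(z+1;\tau)=\Omega_{12}(z;\tau)$ and $\Omega_{12}(z+\tau;\tau)=\Omega_{12}(z;\tau)-1$, i.e. (\ref{qqq}); single-valuedness of $\Omega_{12}$ is inherited from that of $\Omega$.

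The content is in the converse. Given a single-valued $\Omega_{12}$ solving (\ref{186}) and satisfying (\ref{qqq}), I would reconstruct $\Omega$ by the displayed formulas above. By construction $\Omega$ is single-valued in $\mathbb{C}\times\mathbb{H}$ and satisfies all four entries of (\ref{185}); since there is only one deformation parameter, (\ref{185-1}) holds trivially, so (\ref{184}) is completely integrable. It then remains to verify the matrix quasi-periodicity (\ref{204}). Using that $I$ is elliptic and that (\ref{qqq}) forces $\Omega_{12}'$ and $\Omega_{12}''$ to be genuinely periodic under both $z\mapsto z+1$ and $z\mapsto z+\tau$ (the additive constant $-1$ drops under differentiation), one checks $\Omega(z+1;\tau)=\Omega(z;\tau)$ directly, and for the $\tau$-translation only the $(2,1)$ entry sees the shift: $\Omega_{21}(z+\tau)=-\tfrac12\Omega_{12}''(z)+I(z)\big(\Omega_{12}(z)-1\big)=\Omega_{21}(z)-I(z)$, which is exactly the $-Q$ correction in (\ref{204}). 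With (\ref{204}) and the integrability in hand, Theorem \ref{thm M} yields that (\ref{153}) is monodromy preserving.

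I expect the only delicate point to be the bookkeeping in the converse: confirming that the trace-free reconstruction is consistent (that the $(2,1)$ equation is not over-determined but is precisely (\ref{186})) and that the single constant $-1$ in (\ref{qqq}) propagates correctly through the entries of $\Omega$ so as to reproduce $-Q$ rather than some other multiple. Once the entrywise form of (\ref{185}) is written out, both of these are routine, and no new input beyond ellipticity of $I$ and the already-established Theorem \ref{thm M} is needed.
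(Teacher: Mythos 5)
Your proposal is correct and follows essentially the same route as the paper: necessity is read off from Theorem \ref{thm M}, and for sufficiency the matrix $\Omega$ is reconstructed from $\Omega_{12}$ by exactly the formulas (\ref{qq}) (i.e.\ $\Omega_{11}=-\tfrac12\Omega_{12}'$, $\Omega_{21}=\Omega_{11}'+I\Omega_{12}$, $\Omega_{22}=\Omega_{12}'+\Omega_{11}$), after which (\ref{186}) is the surviving scalar constraint and (\ref{204}) follows from (\ref{qqq}) and the ellipticity of $I$. The only difference is cosmetic: the paper cites \cite[Proposition 3.5.1]{GP} for the entrywise reduction of (\ref{185}), whereas you carry it out explicitly (and correctly, including the harmless trace ambiguity $g(\tau)\cdot\mathrm{Id}$).
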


\begin{proof}
By Theorem \ref{thm M}, it suffices to prove the sufficient part. Suppose
there exists a single-valued solution $\Omega_{12}\left(  z;\tau \right)  $ to
(\ref{186}) satisfying (\ref{qqq}). Then we define $\Omega \left(
z;\tau \right)  =\left(
\begin{matrix}
\Omega_{11} & \Omega_{12}\\
\Omega_{21} & \Omega_{22}%
\end{matrix}
\right)  $ by setting{\allowdisplaybreaks
\begin{align}
\Omega_{11}\left(  z;\tau \right)   &  =-\frac{1}{2}\Omega_{12}^{\prime}\left(
z;\tau \right)  ,\label{qq}\\
\Omega_{21}\left(  z;\tau \right)   &  =\Omega_{11}^{\prime}\left(
z;\tau \right)  +\Omega_{12}\left(  z;\tau \right)  I\left(  z;\tau \right)
,\nonumber \\
\Omega_{22}\left(  z;\tau \right)   &  =\Omega_{12}^{\prime}\left(
z;\tau \right)  +\Omega_{11}\left(  z;\tau \right)  .\nonumber
\end{align}
}By (\ref{186}), it is easy to see that $\Omega \left(  z;\tau \right)  $
satisfies the integrability condition (\ref{185}) (see e.g. \cite[Proposition
3.5.1]{GP}), namely (\ref{184}) is completely integrable. Finally, (\ref{204})
follows from (\ref{qqq}). This completes the proof.
\end{proof}

The first main result of this section is as follows:

\begin{theorem}
\label{thm4-2}Let $n_{k}\not \in \frac{1}{2}+\mathbb{Z}$, $k=0,1,2,3$ and
$p\left(  \tau \right)  $ is an apparent singular point of the generalized
Lam\'{e} equation (\ref{152}) with (\ref{154}). Then (\ref{152}) with $\left(
p,A\right)  =\left(  p\left(  \tau \right)  ,A\left(  \tau \right)  \right)  $
is an isomonodromic deformation with respect to $\tau$ if and only if $\left(
p\left(  \tau \right)  ,A\left(  \tau \right)  \right)  $ satisfies the
Hamiltonian system:%
\begin{equation}
\frac{dp\left(  \tau \right)  }{d\tau}=\frac{\partial K\left(  p,A,\tau \right)
}{\partial A},\text{ \ }\frac{dA\left(  \tau \right)  }{d\tau}=-\frac{\partial
K\left(  p,A,\tau \right)  }{\partial p}, \label{142}%
\end{equation}
where%
\begin{equation}
K\left(  p,A,\tau \right)  =\frac{-i}{4\pi}\left(
\begin{array}
[c]{l}%
A^{2}+\left(  -\zeta \left(  2p|\tau \right)  +2p\eta_{1}\left(  \tau \right)
\right)  A-\frac{3}{4}\wp \left(  2p|\tau \right) \\
-\sum_{k=0}^{3}n_{k}\left(  n_{k}+1\right)  \wp \left(  p+\frac{\omega_{k}}%
{2}|\tau \right)
\end{array}
\right)  . \label{143}%
\end{equation}

\end{theorem}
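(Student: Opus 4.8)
By Theorem~\ref{thm M1}, the isomonodromic deformation of (\ref{152}) is equivalent to the existence of a single-valued $\Omega_{12}(z;\tau)$ on $\mathbb{C}\times\mathbb{H}$ solving the third-order scalar equation (\ref{186}) and obeying the quasi-periodicity (\ref{qqq}). Hence the whole theorem reduces to showing that such an $\Omega_{12}$ exists exactly when $(p,A)=(p(\tau),A(\tau))$ solves (\ref{142}). The plan is to first pin down the analytic structure that (\ref{186}) and (\ref{qqq}) force on $\Omega_{12}$, and then read off the deformation equations from a local analysis at the singular points.

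I would examine the source $-2\partial_\tau I$ of (\ref{186}) at each singular point of $I$. At $\pm p$, differentiation in $\tau$ produces a moving pole, giving a third-order pole proportional to $\frac{dp}{d\tau}$; the apparent-singularity structure (exponent difference $2$) makes the two leading Laurent orders of (\ref{186}) cancel identically, ruling out a double pole and then annihilating the $(z-p)^{-4}$ term, so $\Omega_{12}$ can have at most a simple pole there. The delicate point is the half-periods: since $\frac{\omega_2}{2}=\frac{\tau}{2}$ and $\frac{\omega_3}{2}=\frac{1+\tau}{2}$ themselves move with $\tau$, the terms $n_k(n_k+1)\wp(z+\frac{\omega_k}{2})$ contribute a third-order pole $\propto n_k(n_k+1)$ to $\partial_\tau I$ at $\frac{\omega_2}{2},\frac{\omega_3}{2}$ (while $\frac{\omega_0}{2},\frac{\omega_1}{2}$ stay fixed and contribute nothing). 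Nevertheless, because $-2I'\Omega_{12}$ already carries a third-order pole when $\Omega_{12}$ is merely holomorphic, these sources can be absorbed by requiring $\Omega_{12}$ to be holomorphic at every half-period with prescribed values $\Omega_{12}(\frac{\omega_0}{2})=\Omega_{12}(\frac{\omega_1}{2})=0$, $\Omega_{12}(\frac{\omega_2}{2})=\Omega_{12}(\frac{\omega_3}{2})=-\frac12$ and vanishing second derivatives there. Combining this with (\ref{qqq}), the evenness $I(z)=I(-z)$ (which forces equal residues at $\pm p$) and the Legendre relation $\eta_1\tau-\eta_2=2\pi i$, I expect these constraints to be met by the single explicit choice
\begin{equation*}
\Omega_{12}(z;\tau)=\frac{-i}{4\pi}\bigl(\zeta(z-p|\tau)+\zeta(z+p|\tau)-2\eta_1(\tau)z\bigr),
\end{equation*}
which is completely determined by $p(\tau)$ and carries no free parameter.

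With this ansatz fixed, the deformation equations come from matching the remaining Laurent coefficients of (\ref{186}) at $z=p$ (the expansion at $-p$ being mirror-symmetric). The coefficient of $(z-p)^{-3}$ equates a combination of the residue $\frac{-i}{4\pi}$, the finite part of $\Omega_{12}$ at $p$ equal to $\frac{-i}{4\pi}(\zeta(2p)-2\eta_1 p)$, and $A$ to $-3\frac{dp}{d\tau}$, yielding precisely the first Hamilton equation $\frac{dp}{d\tau}=\partial_A K$. Here the apparent-singularity relation (\ref{101}) of Lemma~\ref{lem-apparent} enters crucially: it collapses the constant Laurent coefficient of $I$ at $p$ to the clean value $A^2$, which makes the $(z-p)^{-2}$ identity automatically consistent and lets the $(z-p)^{-1}$ identity reduce, after inserting the expansions of $\zeta,\wp,\wp'$ at $2p$, to exactly $\frac{dA}{d\tau}=-\partial_p K$. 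Finally I would check that $\Omega_{12}'''-4I\Omega_{12}'-2I'\Omega_{12}+2\partial_\tau I$, now an elliptic function with all of its poles cancelled, is the zero constant, so that the constructed $\Omega_{12}$ genuinely solves (\ref{186}); this closes both directions of the equivalence.

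The main obstacle is bookkeeping the contributions of the \emph{moving} singularities correctly. The convective terms $\wp'(\cdot)\,\frac{d}{d\tau}(\text{pole location})$ in $\partial_\tau I$ occur not only at $\pm p$ but also at the $\tau$-dependent half-periods $\frac{\omega_2}{2},\frac{\omega_3}{2}$, and it is easy to overlook them and wrongly conclude that $\Omega_{12}$ is singular or unconstrained there; the whole scheme hinges on the compensation that lets $\Omega_{12}$ stay holomorphic at the half-periods with the specific values $0,0,-\frac12,-\frac12$. Once this is in hand, the remaining work — expanding the elliptic functions at $2p$ and using the $\tau$-derivative of (\ref{101}) to trade $\frac{dB}{d\tau}$ for $\frac{dp}{d\tau},\frac{dA}{d\tau}$ — is routine, and the outcome is conceptually transparent, since by (\ref{143-0}) the Hamiltonian $K$ is, up to the normalization $\frac{-i}{4\pi}$ and the term $2p\eta_1 A$, just the accessory parameter $B$, exactly as in the Fuchs--Okamoto theory on $\mathbb{CP}^1$.
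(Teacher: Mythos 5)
Your reduction via Theorem~\ref{thm M1} and your forward direction are sound: once $\Omega_{12}$ is taken to be $\frac{-i}{4\pi}(\zeta(z-p)+\zeta(z+p)-2\eta_1 z)$, the Laurent matching of (\ref{186}) at $z=p$ (the $u^{-3}$ and $u^{-1}$ coefficients giving the two Hamilton equations, the $u^{-2}$ and $u^{0}$ coefficients being consequences) and the cancellation at the half-periods — including the convective third-order poles at the moving points $\frac{\tau}{2},\frac{1+\tau}{2}$ against the values $0,0,-\frac12,-\frac12$ — is exactly the computation the paper performs in Lemma \ref{lem-U}. The gap is in the converse direction. You assert that the constraints force the single explicit $\Omega_{12}$ "with no free parameter," but this is false in general, and your derivation of the half-period values presupposes that $\Omega_{12}$ is holomorphic there. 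Lemma \ref{lem4-5}(ii) shows that a single-valued solution of (\ref{186})--(\ref{qqq}) may have a pole of order $2n_i$ at $\frac{\omega_i}{2}$ whenever $n_i\in\mathbb{N}$; and when $n_i=0$ there is no local constraint at $\frac{\omega_i}{2}$ at all, so nothing in your scheme pins down $\Omega_{12}$ — Remark \ref{rmk} exhibits a one-parameter family of admissible $\Omega_{12}$'s (differing by an even elliptic solution of the second symmetric product equation (\ref{189})) whose residues at $\pm p$ are \emph{not} $\frac{-i}{4\pi}$. Since your $(z-p)^{-3}$ identity reads off $\frac{dp}{d\tau}$ from the residue and finite part of $\Omega_{12}$ at $p$, an uncontrolled $\Omega_{12}$ yields the wrong deformation equation, so the "only if" implication does not follow as written.

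The missing ingredient is a normalization step. Given an arbitrary single-valued $\hat{\Omega}_{12}$ produced by the isomonodromic deformation, one must replace it by its odd part $\frac12[\hat{\Omega}_{12}(z;\tau)-\hat{\Omega}_{12}(-z;\tau)]$ and justify that this is again the $(1,2)$-entry of $\partial_\tau Y\cdot Y^{-1}$ for \emph{some} $M$-invariant fundamental solution; this is the content of Lemmas \ref{lemI} and \ref{lem4-4} (the difference of two admissible $\Omega_{12}$'s is an elliptic solution of (\ref{189}), and $-\hat{\Omega}_{12}(-z;\tau)$ is itself admissible). Only after this does oddness, combined with the even order $2n_i$ from Lemma \ref{lem4-5}(ii), rule out poles at the half-periods, and oddness plus (\ref{qqq}) plus the Legendre relation then force the explicit formula of Theorem \ref{thmA} — in particular the residue $\frac{-i}{4\pi}$ you use. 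Note also that evenness of $I$ by itself only says that $-\Omega_{12}(-z)$ is \emph{another} solution of (\ref{186}); it does not make a given solution odd, which is precisely why the symmetrization argument is needed. With Theorem \ref{thmA} supplied, the rest of your plan goes through and coincides with the paper's proof.
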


To prove Theorem \ref{thm4-2}, we need the following formulae for theta
functions and functions in Weierstrass elliptic function theory.

\begin{lemma}
\label{lem4-1} The following formulae hold:

(i)%
\[
\frac{\partial}{\partial \tau}\ln \sigma \left(  z|\tau \right)  =\frac{i}{4\pi
}\left[  \wp \left(  z|\tau \right)  -\zeta^{2}\left(  z|\tau \right)  +2\eta
_{1}\left(  z\zeta \left(  z|\tau \right)  -1\right)  -\frac{1}{12}g_{2}%
z^{2}\right]  ,
\]
(ii)%
\[
\frac{\partial}{\partial \tau}\zeta \left(  z|\tau \right)  =\frac{i}{4\pi
}\left[
\begin{array}
[c]{l}%
\wp^{\prime}\left(  z|\tau \right)  +2\left(  \zeta \left(  z|\tau \right)
-z\eta_{1}\left(  \tau \right)  \right)  \wp \left(  z|\tau \right)  \\
+2\eta_{1}\zeta \left(  z|\tau \right)  -\frac{1}{6}zg_{2}\left(  \tau \right)
\end{array}
\right]  ,
\]
(iii)%
\[
\frac{\partial}{\partial \tau}\wp \left(  z|\tau \right)  =\frac{-i}{4\pi}\left[
\begin{array}
[c]{l}%
2\left(  \zeta \left(  z|\tau \right)  -z\eta_{1}\left(  \tau \right)  \right)
\wp^{\prime}\left(  z|\tau \right)  \\
+4\left(  \wp \left(  z|\tau \right)  -\eta_{1}\right)  \wp \left(
z|\tau \right)  -\frac{2}{3}g_{2}\left(  \tau \right)
\end{array}
\right]  ,
\]
(iv)%
\[
\frac{\partial}{\partial \tau}\wp^{\prime}\left(  z|\tau \right)  =\frac
{-i}{4\pi}\left[
\begin{array}
[c]{l}%
6\left(  \wp \left(  z|\tau \right)  -\eta_{1}\right)  \wp^{\prime}\left(
z|\tau \right)  \\
+\left(  \zeta \left(  z|\tau \right)  -z\eta_{1}\left(  \tau \right)  \right)
\left(  12\wp^{2}\left(  z|\tau \right)  -g_{2}\left(  \tau \right)  \right)
\end{array}
\right]  ,
\]
(v)%
\[
\frac{d}{d\tau}\eta_{1}\left(  \tau \right)  =\frac{i}{4\pi}\left[  2\eta
_{1}^{2}-\frac{1}{6}g_{2}\left(  \tau \right)  \right]  ,
\]
(vi)%
\[
\frac{d}{d\tau}\ln \theta_{1}^{\prime}\left(  \tau \right)  =\frac{3i}{4\pi}%
\eta_{1},
\]
where%
\[
g_{2}\left(  \tau \right)  =-4\left(  e_{1}\left(  \tau \right)  e_{2}\left(
\tau \right)  +e_{1}\left(  \tau \right)  e_{3}\left(  \tau \right)
+e_{2}\left(  \tau \right)  e_{3}\left(  \tau \right)  \right)  ,
\]%
\[
\theta_{1}^{\prime}\left(  \tau \right)  \doteqdot \frac{d}{dz}\vartheta
_{1}\left(  z;\tau \right)  |_{z=0},\text{ \ }\frac{d}{dz}\ln \sigma \left(
z|\tau \right)  \doteqdot \zeta \left(  z|\tau \right)  ,
\]%
\begin{equation}
\vartheta_{1}\left(  z;\tau \right)  \doteqdot-i\sum_{n=-\infty}^{\infty
}(-1)^{n}e^{(n+\frac{1}{2})^{2}\pi i\tau}e^{(2n+1)\pi iz}.\label{ccc}%
\end{equation}

\end{lemma}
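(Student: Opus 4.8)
The plan is to convert all $\tau$-derivatives into $z$-derivatives by means of the heat equation for $\vartheta_1$, and then translate back into Weierstrass functions via the classical factorization of $\sigma$ in terms of $\vartheta_1$. First I would check directly from the series \eqref{ccc} that
\[
\frac{\partial^2}{\partial z^2}\vartheta_1(z;\tau)=4\pi i\,\frac{\partial}{\partial\tau}\vartheta_1(z;\tau),
\]
which is immediate since $\partial_z^2$ contributes the factor $[(2n+1)\pi i]^2=4(n+\tfrac12)^2(\pi i)^2$ while $\partial_\tau$ contributes $(n+\tfrac12)^2\pi i$. Since $1/(4\pi i)=-i/(4\pi)$, this already explains the prefactors appearing in (i)--(vi). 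Next I would use the factorization $\sigma(z|\tau)=\theta_1'(\tau)^{-1}\exp(\tfrac12\eta_1 z^2)\,\vartheta_1(z;\tau)$, so that $\ln\sigma=\ln\vartheta_1-\ln\theta_1'+\tfrac12\eta_1 z^2$; differentiating in $z$ gives $\partial_z\ln\vartheta_1=\zeta(z)-\eta_1 z$ and $\partial_z^2\ln\vartheta_1=-\wp(z)-\eta_1$.

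Writing $L=\ln\vartheta_1$, the heat equation turns into $\partial_\tau L=\frac{1}{4\pi i}\big(\partial_z^2 L+(\partial_z L)^2\big)$, i.e.
\[
\partial_\tau\ln\vartheta_1=\frac{-i}{4\pi}\big[(\zeta-\eta_1 z)^2-\wp-\eta_1\big].
\]
From this I obtain (vi) by expanding at $z=0$: comparing the linear terms of $\partial_z L=\zeta-\eta_1 z$ with those of $\partial_z\ln(\theta_1'z+\tfrac{\theta_1'''}{6}z^3+\cdots)$ gives $\theta_1'''/\theta_1'=-3\eta_1$, while differentiating the heat equation once in $z$ and setting $z=0$ (using that $\vartheta_1$ is odd) gives $\partial_\tau\ln\theta_1'=\frac{1}{4\pi i}\theta_1'''/\theta_1'$; together these yield (vi).

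The crucial step is (v). Differentiating the displayed formula for $\partial_\tau\ln\vartheta_1$ in $z$ and using $\partial_z\partial_\tau L=\partial_\tau\partial_z L=\partial_\tau\zeta-z\,\partial_\tau\eta_1$, I arrive at
\[
\partial_\tau\zeta=z\,\partial_\tau\eta_1+\frac{i}{4\pi}\big[\wp'+2(\zeta-\eta_1 z)(\wp+\eta_1)\big],
\]
which still involves the unknown constant $\partial_\tau\eta_1$. To determine it I use the global-to-local input that the Laurent expansion of $\zeta$ at the origin has no linear term, so that $\partial_\tau\zeta=O(z^3)$; reading off the coefficient of $z$ on the right-hand side, with the help of $\wp=z^{-2}+\tfrac{g_2}{20}z^2+\cdots$, $\wp'=-2z^{-3}+\tfrac{g_2}{10}z+\cdots$ and $\zeta=z^{-1}-\tfrac{g_2}{60}z^3+\cdots$, forces $\partial_\tau\eta_1=\frac{i}{4\pi}(2\eta_1^2-\tfrac16 g_2)$, which is (v). Substituting (v) back cancels the spurious $\eta_1^2 z$ terms and produces (ii) exactly.

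The remaining formulae follow by further $z$-differentiation: since $\wp=-\partial_z\zeta$ and $\wp'=\partial_z\wp$, one has $\partial_\tau\wp=-\partial_z(\partial_\tau\zeta)$ and $\partial_\tau\wp'=\partial_z(\partial_\tau\wp)$, where the identities $\wp''=6\wp^2-\tfrac12 g_2$ and $\wp'''=12\wp\wp'$ are used to reduce the results to the stated shapes; this gives (iii) and (iv). Finally (i) is assembled from $\partial_\tau\ln\sigma=\partial_\tau\ln\vartheta_1-\partial_\tau\ln\theta_1'+\tfrac{z^2}{2}\partial_\tau\eta_1$ using the displayed formula together with (vi) and (v). The main obstacle is the apparent circularity in (v): the heat equation pins down $\partial_\tau\zeta$ only up to the additive constant $z\,\partial_\tau\eta_1$, and the idea that breaks the impasse is the vanishing of the linear Laurent coefficient of $\zeta$; everything else is careful but routine bookkeeping, where the only real danger is mistracking the factors of $\pi$, $i$, and the rational multiples of $g_2$.
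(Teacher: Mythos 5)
Your proposal is correct; I verified each step, including the sign and coefficient bookkeeping in (i)--(v). Note, however, that the paper does not actually prove Lemma \ref{lem4-1}: it simply states that these formulae are known and refers to the literature (Brezhnev's paper and references therein). So your argument is not an alternative to the paper's proof but a self-contained derivation where the paper offers none. Your route --- the heat equation $\partial_z^2\vartheta_1=4\pi i\,\partial_\tau\vartheta_1$ read off directly from the series (\ref{ccc}), combined with the classical factorization $\sigma(z|\tau)=\theta_1'(\tau)^{-1}e^{\eta_1 z^2/2}\vartheta_1(z;\tau)$ --- is the standard and arguably cleanest way to obtain these identities, and your key observation that the undetermined additive constant $z\,\partial_\tau\eta_1$ is pinned down by the vanishing of the linear Laurent coefficient of $\zeta$ at $z=0$ is exactly the right way to break the apparent circularity in (v); I checked that the coefficient of $z$ on the right-hand side is $\partial_\tau\eta_1+\frac{i}{4\pi}\bigl(\frac{g_2}{6}-2\eta_1^2\bigr)$, which gives (v), and that substituting back yields (ii) verbatim, with (iii), (iv) then following by $z$-differentiation using $\wp''=6\wp^2-\frac12 g_2$, and (i), (vi) assembling as you describe. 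The only external input you use beyond the series definition is the $\sigma$--$\vartheta_1$ factorization, which is classical; if one wanted the lemma fully self-contained one would add a line justifying it (e.g.\ both sides are entire, odd, have the same simple zeros on $\Lambda_\tau$, the same quasi-periodicity, and derivative $1$ at the origin), but citing it as standard is entirely reasonable.
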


Those formulae in Lemma \ref{lem4-1} are known in the literature; see e.g.
\cite{YB} and references therein for the proofs.

To give a motivation for our proof of Theorem \ref{thm4-2}, we first consider
the simplest case $n_{k}=0,\forall k$: Let $a_{1}=r+s\tau$ where $\left(
r,s\right)  \in \mathbb{C}^{2}\backslash \frac{1}{2}\mathbb{Z}^{2}$ is a fixed
pair and $\pm p\left(  \tau \right)  $, $A\left(  \tau \right)  $, $B\left(
\tau \right)  $ be defined by%
\begin{equation}
\zeta \left(  a_{1}\left(  \tau \right)  +p\left(  \tau \right)  \right)
+\zeta \left(  a_{1}\left(  \tau \right)  -p\left(  \tau \right)  \right)
-2\left(  r\eta_{1}(\tau)+s\eta_{2}(\tau)\right)  =0, \label{132}%
\end{equation}%
\begin{equation}
A=\frac{1}{2}\left[  \zeta \left(  p+a_{1}\right)  +\zeta \left(  p-a_{1}%
\right)  -\zeta \left(  2p\right)  \right]  , \label{44}%
\end{equation}%
\begin{equation}
B=A^{2}-\zeta \left(  2p\right)  A-\frac{3}{4}\wp \left(  2p\right)  ,
\label{45}%
\end{equation}
respectively. In \cite{Chen-Kuo-Lin} we could prove that under (\ref{132}%
)-(\ref{45}), the two functions%
\[
y_{\pm a_{1}}\left(  z;\tau \right)  =e^{\pm \frac{z}{2}\left(  \zeta \left(
a_{1}+p\right)  +\zeta \left(  a_{1}-p\right)  \right)  }\frac{\sigma \left(
z\mp a_{1}\right)  }{\left[  \sigma \left(  z+p\right)  \sigma \left(
z-p\right)  \right]  ^{\frac{1}{2}}}%
\]
are two linearly independent solutions to the generalized Lam\'{e} equation
(\ref{152}) with $n_{k}=0$, $k=0,1,2,3$, i.e.,
\begin{equation}
y^{\prime \prime}=\left[  \frac{3}{4}\left(  \wp \left(  z+p\right)  +\wp \left(
z-p\right)  \right)  +A\left(  \zeta \left(  z+p\right)  -\zeta \left(
z-p\right)  \right)  +B\right]  y. \label{187}%
\end{equation}
Observe that (\ref{187}) has singularties only at $\pm p$. Thus, the monodromy
representation of (\ref{187}) is a group homomorphism $\rho:\pi_{1}\left(
E_{\tau}\backslash \left \{  \pm p\right \}  ,q_{0}\right)  \rightarrow SL\left(
2,\mathbb{C}\right)  $. Then we could also compute the monodromy group of
(\ref{187}) with respect to $\left(  y_{a_{1}}\left(  z;\tau \right)
,y_{-a_{1}}\left(  z;\tau \right)  \right)  ^{t}$ as following
\cite{Chen-Kuo-Lin}:%
\begin{equation}
\rho(\gamma_{\pm})\left(
\begin{matrix}
y_{a_{1}}\left(  z;\tau \right) \\
y_{-a_{1}}\left(  z;\tau \right)
\end{matrix}
\right)  =\left(
\begin{matrix}
-1 & 0\\
0 & -1
\end{matrix}
\right)  \left(
\begin{matrix}
y_{a_{1}}\left(  z;\tau \right) \\
y_{-a_{1}}\left(  z;\tau \right)
\end{matrix}
\right)  , \label{155-1}%
\end{equation}%
\begin{equation}
\rho(\ell_{1})\left(
\begin{matrix}
y_{a_{1}}\left(  z;\tau \right) \\
y_{-a_{1}}\left(  z;\tau \right)
\end{matrix}
\right)  =\left(
\begin{matrix}
e^{-2\pi is} & 0\\
0 & e^{2\pi is}%
\end{matrix}
\right)  \left(
\begin{matrix}
y_{a_{1}}\left(  z;\tau \right) \\
y_{-a_{1}}\left(  z;\tau \right)
\end{matrix}
\right)  , \label{155}%
\end{equation}%
\begin{equation}
\rho(\ell_{2})\left(
\begin{matrix}
y_{a_{1}}\left(  z;\tau \right) \\
y_{-a_{1}}\left(  z;\tau \right)
\end{matrix}
\right)  =\left(
\begin{matrix}
e^{2\pi ir} & 0\\
0 & e^{-2\pi ir}%
\end{matrix}
\right)  \left(
\begin{matrix}
y_{a_{1}}\left(  z;\tau \right) \\
y_{-a_{1}}\left(  z;\tau \right)
\end{matrix}
\right)  . \label{156}%
\end{equation}
By (\ref{45}) and Lemma \ref{lem-apparent}, $\pm p\left(  \tau \right)  $ are
apparent singularities. Since the pair $\left(  r,s\right)  $ is fixed, we see
from (\ref{155-1})-(\ref{156}) that the generalized Lam\'{e} equation
(\ref{187}) is monodromy preserving. Thus $Y=\left(
\begin{matrix}
y_{a_{1}}\left(  z;\tau \right)  & y_{-a_{1}}\left(  z;\tau \right) \\
y_{a_{1}}^{\prime}\left(  z;\tau \right)  & y_{-a_{1}}^{\prime}\left(
z;\tau \right)
\end{matrix}
\right)  $ is an $M$-invariant fundamental solution for the system
(\ref{153}). Then by Theorem \ref{thm M}, the single-valued matrix
$\Omega \left(  z;\tau \right)  $ could be defined by%
\begin{align*}
\Omega \left(  z;\tau \right)   &  =\frac{\partial}{\partial \tau}Y\cdot Y^{-1}\\
&  =\frac{1}{\det Y}\left(
\begin{matrix}
\frac{\partial}{\partial \tau}y_{a_{1}} & \frac{\partial}{\partial \tau
}y_{-a_{1}}\\
\frac{\partial}{\partial \tau}y_{a_{1}}^{\prime} & \frac{\partial}{\partial
\tau}y_{-a_{1}}^{\prime}%
\end{matrix}
\right)  \left(
\begin{matrix}
y_{-a_{1}}^{\prime} & -y_{-a_{1}}\\
-y_{a_{1}}^{\prime} & y_{a_{1}}%
\end{matrix}
\right)  ,
\end{align*}
which gives us {\allowdisplaybreaks%
\[
\Omega_{12}=\frac{y_{a_{1}}\frac{\partial}{\partial \tau}y_{-a_{1}}-y_{-a_{1}%
}\frac{\partial}{\partial \tau}y_{a_{1}}}{y_{a_{1}}y_{-a_{1}}^{\prime
}-y_{-a_{1}}y_{a_{1}}^{\prime}}=\frac{\frac{\partial}{\partial \tau}\ln
\frac{y_{a_{1}}}{y_{-a_{1}}}}{\frac{\partial}{\partial z}\ln \frac{y_{a_{1}}%
}{y_{-a_{1}}}}=\frac{\frac{\partial}{\partial \tau}\ln f\left(  z;\tau \right)
}{\frac{\partial}{\partial z}\ln f\left(  z;\tau \right)  },
\]
}where $f\doteqdot \frac{y_{a_{1}}}{y_{-a_{1}}}$ is given by%
\[
f\left(  z;\tau \right)  =e^{z\left(  \zeta \left(  a_{1}+p\right)
+\zeta \left(  a_{1}-p\right)  \right)  }\frac{\sigma \left(  z-a_{1}\right)
}{\sigma \left(  z+a_{1}\right)  }.
\]
Using (\ref{132}) and Legendre relation $\tau \eta_{1}-\eta_{2}=2\pi i$, we
have%
\begin{equation}
f\left(  z;\tau \right)  =e^{2za_{1}\eta_{1}-4\pi isz}\frac{\sigma \left(
z-a_{1}\right)  }{\sigma \left(  z+a_{1}\right)  }. \label{172}%
\end{equation}
In order to compute $\Omega_{12}$, we compute $\frac{\partial}{\partial \tau
}\ln f\left(  z;\tau \right)  $ and $\frac{\partial}{\partial z}\ln f\left(
z;\tau \right)  $, respectively. By Lemma \ref{lem4-1} and (\ref{172}), we
have{\allowdisplaybreaks%
\begin{align}
&  \frac{\partial}{\partial \tau}\ln f\left(  z;\tau \right) \label{173}\\
=  &  2zs\eta_{1}+2za_{1}\frac{d\eta_{1}}{d\tau}-\left(  \zeta \left(
z-a_{1}|\tau \right)  +\zeta \left(  z+a_{1}|\tau \right)  \right)  s\nonumber \\
&  +\frac{\partial}{\partial \tau}\ln \sigma \left(  z-a_{1}|\tau \right)
-\frac{\partial}{\partial \tau}\ln \sigma \left(  z+a_{1}|\tau \right) \nonumber \\
=  &  \frac{i}{4\pi}\left[  \zeta \left(  z+a_{1}\right)  -\zeta \left(
z-a_{1}\right)  -2\eta_{1}a_{1}+4\pi is\right] \nonumber \\
&  \times \left[  \zeta \left(  z-a_{1}\right)  +\zeta \left(  z+a_{1}\right)
-2z\eta_{1}\right]  +\frac{i}{4\pi}\left[  \wp \left(  z-a_{1}\right)
-\wp \left(  z+a_{1}\right)  \right]  ,\nonumber
\end{align}
}and
\begin{equation}
\frac{\partial}{\partial z}\ln f\left(  z;\tau \right)  =2a_{1}\eta_{1}-4\pi
is+\zeta \left(  z-a_{1}\right)  -\zeta \left(  z+a_{1}\right)  . \label{174}%
\end{equation}
\medskip Thus from (\ref{173}), (\ref{174}) and (\ref{132}), we
have{\allowdisplaybreaks%
\begin{align}
\Omega_{12}\left(  z;\tau \right)  =  &  -\frac{i}{4\pi}\left[  \zeta \left(
z-a_{1}\right)  +\zeta \left(  z+a_{1}\right)  -2z\eta_{1}\right] \nonumber \\
&  +\frac{i}{4\pi}\frac{\wp \left(  z-a_{1}\right)  -\wp \left(  z+a_{1}\right)
}{2a_{1}\eta_{1}-4\pi is+\zeta \left(  z-a_{1}\right)  -\zeta \left(
z+a_{1}\right)  }\nonumber \\
=  &  -\frac{i}{4\pi}\left[  \zeta \left(  z-a_{1}\right)  +\zeta \left(
z+a_{1}\right)  -2z\eta_{1}\right] \label{175}\\
&  +\frac{i}{4\pi}\frac{\wp \left(  z-a_{1}\right)  -\wp \left(  z+a_{1}\right)
}{\zeta \left(  a_{1}+p\right)  +\zeta \left(  a_{1}-p\right)  +\zeta \left(
z-a_{1}\right)  -\zeta \left(  z+a_{1}\right)  }.\nonumber
\end{align}
}From (\ref{175}), we see that $\pm a_{1}$ are not poles of $\Omega
_{12}\left(  z;\tau \right)  $. In fact, $\pm p$ are the only simple poles and
$0$ is a zero of $\Omega_{12}\left(  z;\tau \right)  $. Furthermore, we have%
\begin{equation}
\underset{z=\pm p}{\text{Res}}\Omega_{12}\left(  z;\tau \right)  =\frac
{-i}{4\pi}. \label{157}%
\end{equation}
By (\ref{175}), it is easy to see that%
\begin{equation}
\Omega_{12}\left(  -z;\tau \right)  =-\Omega_{12}\left(  z;\tau \right)  .
\label{188}%
\end{equation}
By (\ref{157}), (\ref{188}) and (\ref{qqq}), $\Omega_{12}\left(
z;\tau \right)  $ has a simpler expression as follows:%
\[
\Omega_{12}\left(  z;\tau \right)  =-\frac{i}{4\pi}\left(  \zeta \left(
z-p\right)  +\zeta \left(  z+p\right)  -2z\eta_{1}\right)  .
\]

For the general case, we do not have the explicit expression of the two
linearly independent solutions. But the discussion above motivates us to find
the explicit form of $\Omega_{12}$. For example, we might ask whether there
exists $\Omega_{12}$ satisfying the property (\ref{188}) or not. Thus, we need
to study it via a different way. More precisely, we prove the following
theorem:\medskip

\begin{theorem}
\label{thmA}Under the assumption of Theorem \ref{thm4-2}, suppose the
generalized Lam\'{e} equation (\ref{152}) with $\left(  p,A\right)  =\left(
p\left(  \tau \right)  ,A\left(  \tau \right)  \right)  $ is an isomonodromic
deformation with respect to $\tau$. Then there exists an $M$-invariant
fundamental solution $Y\left(  z;\tau \right)  $ of system (\ref{153}) such
that $\Omega_{12}\left(  z;\tau \right)  $ is of the form:%
\begin{equation}
\Omega_{12}\left(  z;\tau \right)  =-\frac{i}{4\pi}\left(  \zeta \left(
z-p\left(  \tau \right)  \right)  +\zeta \left(  z+p\left(  \tau \right)
\right)  -2z\eta_{1}\right)  , \label{c}%
\end{equation}
where $\Omega_{12}\left(  z;\tau \right)  $ is the (1,2) component of
$\Omega \left(  z;\tau \right)  $ which is defined by $Y\left(  z;\tau \right)  $.
\end{theorem}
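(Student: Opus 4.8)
The plan is to start from an arbitrary $M$-invariant fundamental solution and show that the single-valued function $\Omega_{12}$ it produces is forced, purely by its analytic structure, to coincide with the right-hand side of (\ref{c}). Since the deformation is isomonodromic, Theorem \ref{thm M} furnishes an $M$-invariant $Y(z;\tau)=\bigl(\begin{smallmatrix} y_1 & y_2\\ y_1' & y_2'\end{smallmatrix}\bigr)$; by Lemma \ref{lemma} the associated $\Omega(z;\tau)=\partial_\tau Y\cdot Y^{-1}$ is single-valued on $\mathbb{C}\times\mathbb{H}$, and by Theorem \ref{thm M1} its entry $\Omega_{12}$ is a single-valued solution of (\ref{186}) obeying the quasi-periodicity (\ref{qqq}). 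The model computation suggests working with the developing map $f\doteqdot y_1/y_2$: a short calculation gives the clean identity $\Omega_{12}=\partial_\tau f/\partial_z f$, which will be the main tool for the local analysis.

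First I would establish the oddness $\Omega_{12}(-z;\tau)=-\Omega_{12}(z;\tau)$, the analogue of (\ref{188}). Since $\wp$ is even, $\zeta$ is odd, and the half-periods are $2$-torsion, the potential $I(z;\tau)$ in (\ref{154}) is even in $z$; hence $z\mapsto -z$ sends solutions of (\ref{152}) to solutions, and the fundamental matrices $Y(z;\tau)$ and $JY(-z;\tau)$ with $J=\mathrm{diag}(1,-1)$ differ by a matrix $N(\tau)$ constant in $z$. Comparing monodromy shows $N^{-1}\rho(\ell)N$ is $\tau$-independent, so $\partial_\tau N\cdot N^{-1}$ commutes with the whole monodromy group; feeding this into $\partial_\tau(JY(-z))\cdot(JY(-z))^{-1}=J\Omega(-z)J$ yields $\Omega_{12}(-z)=-\Omega_{12}(z)$ (the commuting, hence scalar in the irreducible case, correction does not affect the off-diagonal entry).

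Next I would pin down the poles. At the apparent singularities $\pm p$, Lemma \ref{lem-apparent} shows $f$ is holomorphic with a double critical point, $f(z)=c_0+c_2(z\mp p)^2+\cdots$, so $\partial_z f$ has a simple zero and $\Omega_{12}=\partial_\tau f/\partial_z f$ has at worst a simple pole there. At a half-period $\tfrac{\omega_k}{2}$ the exponent difference is $\theta=2n_k+1$, so $f=c_0(\tau)+(z-\tfrac{\omega_k}{2})^{\theta}g(z;\tau)$ with $g$ holomorphic and nonvanishing; then $\Omega_{12}=\partial_\tau f/\partial_z f$ contains a term $\dot c_0\,(z-\tfrac{\omega_k}{2})^{1-\theta}$, which for $n_k\notin\tfrac12\mathbb{Z}$ is multivalued and hence forbidden by single-valuedness of $\Omega_{12}$, forcing $\dot c_0=0$ and leaving $\Omega_{12}$ holomorphic at $\tfrac{\omega_k}{2}$ with finite value $-\tfrac{d}{d\tau}(\tfrac{\omega_k}{2})$. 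This step is the main obstacle: for the \emph{moving} half-periods $\tfrac{\tau}{2},\tfrac{1+\tau}{2}$ the $\tau$-derivative of the singular point contributes a third-order pole to the forcing $2\partial_\tau I$ in (\ref{186}), and the integer cases $n_k\in\mathbb{Z}_{>0}$ (where $\tfrac{\omega_k}{2}$ is itself apparent and single-valuedness no longer instantly kills $\dot c_0$) require the extra input that the isomonodromic deformation keeps the local developing-map value constant; this is where I expect the real work to lie.

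Finally I would identify $\Omega_{12}$ with the target. By oddness the residues of $\Omega_{12}$ at $p$ and $-p$ coincide, say both equal to $\rho$; integrating $\Omega_{12}$ around a fundamental parallelogram and using (\ref{qqq}) gives $\oint\Omega_{12}\,dz=1$, whence $2\pi i\,(2\rho)=1$ and $\rho=-\tfrac{i}{4\pi}$. The right-hand side of (\ref{c}) has exactly the same quasi-periods (via the Legendre relation), the same simple poles at $\pm p$ with residue $-\tfrac{i}{4\pi}$, is holomorphic elsewhere, and is odd. Hence $\Omega_{12}(z;\tau)+\tfrac{i}{4\pi}\bigl(\zeta(z-p)+\zeta(z+p)-2z\eta_1\bigr)$ is an elliptic function, holomorphic on all of $E_\tau$ and odd, therefore identically zero, which is precisely (\ref{c}).
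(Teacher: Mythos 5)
Your outline (oddness, then pole analysis, then identification via quasi-periodicity and residues) parallels the paper's, and your final identification step is correct, but there are two genuine gaps. First, your opening premise --- that the $\Omega_{12}$ produced by an \emph{arbitrary} $M$-invariant $Y$ is forced to equal (\ref{c}) --- is false: by Remark \ref{rmk}, when all $n_i\in\mathbb{Z}$ one can add a nonzero even elliptic solution of the second symmetric-power equation (\ref{189}) to $\Omega_{12}$ and still realize it by an $M$-invariant solution, so the given $\Omega_{12}$ need not be odd. Your symmetry argument detects exactly this: $\dot N N^{-1}$ commutes with the monodromy, but Schur only makes it scalar when the monodromy is irreducible, and the problematic examples (e.g.\ $n_i=0$, where the monodromy is diagonal by (\ref{155-1})--(\ref{156})) are reducible. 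The theorem only asserts \emph{existence} of a suitable $Y$; the paper's fix is to replace $\hat\Omega_{12}$ by its odd part $\tfrac12[\hat\Omega_{12}(z)-\hat\Omega_{12}(-z)]$, observe via Lemma \ref{lem4-4} that $-\hat\Omega_{12}(-z)$ comes from the $M$-invariant solution $\operatorname{diag}(1,-1)Y(-z)$, and then use Lemma \ref{lemI}(ii) to realize the odd part by some (generally different) $M$-invariant $Y$. You need this modification step, not a proof that the original $\Omega_{12}$ is odd.

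Second, the holomorphy of $\Omega_{12}$ at the half-periods --- which you yourself flag as ``where the real work lies'' --- is left unresolved, and your proposed route (forcing $\dot c_0=0$ by single-valuedness of $(z-\tfrac{\omega_k}{2})^{1-\theta}$) cannot close the integer case $n_k\in\mathbb{Z}_{>0}$, nor does it cleanly handle the $\tau$-dependence of the moving singularities $\tfrac{\tau}{2},\tfrac{1+\tau}{2}$. The paper's resolution is different and completes all cases at once: substituting a Laurent expansion of $\Omega_{12}$ at $\tfrac{\omega_i}{2}$ into the deformation equation (\ref{186}) shows that any pole there must have order exactly $2n_i$ (Lemma \ref{lem4-5}(ii)), hence even order with $n_i\in\mathbb{N}$; but an odd function satisfying (\ref{qqq}) satisfies $-\Omega_{12}(\tfrac{\omega_i}{2}+u)=\Omega_{12}(\tfrac{\omega_i}{2}-u)+\mathrm{const}$, which forces the leading coefficient of an even-order pole to vanish. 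So oddness is precisely the input that disposes of the integer case; once you have repaired the first gap, this argument replaces your unfinished local analysis of the developing map. Your residue/contour computation in the last paragraph is correct and is an acceptable alternative to the paper's Step 3.
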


We remark that Theorem \ref{thmA} is a result locally in $\tau$. In the
following, we always assume that $V_{0}$ is a small neighborhood of $\tau_{0}$
such that $p\left(  \tau \right)  \not \in E_{\tau}\left[  2\right]  $ and
$A\left(  \tau \right)  $, $B\left(  \tau \right)  $ are finite for $\tau \in
V_{0}$. First, we study the singularities of $\Omega_{12}\left(
z;\tau \right)  $:

\begin{lemma}
\label{lem4-5}Under the assumption and notations of Theorem \ref{thm4-2},
suppose $Y\left(  z;\tau \right)  $ is an $M$-invariant fundamental solution of
(\ref{153}) with $\left(  p,A\right)  =\left(  p\left(  \tau \right)  ,A\left(
\tau \right)  \right)  $ and $\Omega \left(  z;\tau \right)  $ is defined by
$Y\left(  z;\tau \right)  $. Then

\begin{itemize}
\item[(i)] $\Omega_{12}\left(  \cdot;\tau \right)  $ is meromorphic in
$\mathbb{C}$ and holomorphic for all $z\not \in \{ \pm p(\tau),\frac
{\omega_{i}}{2}$, $i=0,1,2,3\}+\Lambda_{\tau}$.

\item[(ii)] If there exist $i\in \{0,1,2,3\}$ and $\left(  b_{1},b_{2}\right)
\in \mathbb{Z}^{2}$ such that $\frac{\omega_{i}}{2}+b_{1}+b_{2}\tau$ is a pole
of $\Omega_{12}\left(  \cdot;\tau \right)  $ with order $m_{i}$, then
$m_{i}=2n_{i}$, and any point in $\frac{\omega_{i}}{2}+\Lambda_{\tau}$ is also
a pole of $\Omega_{12}\left(  \cdot;\tau \right)  $ with the same order $m_{i}%
$. Consequently, if $\Omega_{12}\left(  \cdot;\tau \right)  $ has a pole at
$\frac{\omega_{i}}{2}+\Lambda_{\tau}$, then $n_{i}\in \mathbb{N}$.

\item[(iii)] $\Omega_{12}\left(  \cdot;\tau \right)  $ has poles at $\left \{
\pm p\right \}  +\Lambda_{\tau}$ of order at most one.
\end{itemize}
\end{lemma}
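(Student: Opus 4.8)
The plan is to study $\Omega_{12}$ entirely through the third-order linear ODE (\ref{186}) that it satisfies, using two structural facts. First, since $Y(z;\tau)$ is $M$-invariant, $\Omega=\frac{\partial}{\partial\tau}Y\cdot Y^{-1}$ and hence $\Omega_{12}$ are single-valued in $\mathbb{C}$ (Lemma \ref{lemma}). Second, the homogeneous part of (\ref{186}) is the \emph{symmetric square} of the generalized Lamé equation (\ref{152}): a direct computation shows that if $y''=Iy$, then for any two solutions $y_1,y_2$ the products satisfy $u'''-4Iu'-2I'u=0$, so $\{y_1^2,y_1y_2,y_2^2\}$ spans the solution space of the homogeneous equation. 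Consequently, at each singular point the local exponents of the homogeneous part of (\ref{186}) are the pairwise sums of the exponents of (\ref{152}). My whole analysis rests on combining this exponent bookkeeping with single-valuedness.

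For part (i), away from $S+\Lambda_\tau$ the potential $I$ is holomorphic, so any fundamental matrix $Y$ is holomorphic there with constant nonzero Wronskian $\det Y$; hence $\Omega=\frac{\partial}{\partial\tau}Y\cdot Y^{-1}$, and in particular $\Omega_{12}$, is holomorphic off $S+\Lambda_\tau$. At each point of $S+\Lambda_\tau$ I would observe that (\ref{186}) is Fuchsian: since $I$ has a double pole, the coefficients $-4I$ and $-2I'$ have poles of order at most $2$ and $3$, and the inhomogeneity $2\frac{\partial}{\partial\tau}I$ is meromorphic. A single-valued solution of such an equation is forced into a Laurent series with finite principal part (single-valuedness rules out both fractional exponents and logarithms), hence meromorphic, which gives (i).

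For parts (ii) and (iii) I would run the local Frobenius analysis. At $\frac{\omega_i}{2}$ one has $I\sim n_i(n_i+1)(z-\frac{\omega_i}{2})^{-2}$, so (\ref{152}) has exponents $-n_i,\,n_i+1$ and the homogeneous part of (\ref{186}) has exponents $-2n_i,\,1,\,2n_i+2$, with indicial polynomial $(k-1)(k+2n_i)(k-2n_i-2)$. Using the heat-type identities of Lemma \ref{lem4-1}, and in particular the cancellation that makes $\frac{\partial}{\partial\tau}\wp(u|\tau)$ regular at $u\equiv0$ (the principal part $u^{-2}$ of $\wp$ is $\tau$-independent), the inhomogeneity $2\frac{\partial}{\partial\tau}I$ has a pole of order at most $3$ there; since the indicial polynomial does not vanish at $k=0$, the forced solution is holomorphic. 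Thus the singular behaviour of $\Omega_{12}$ comes only from the homogeneous solutions, and single-valuedness forces the multivalued terms with exponents $-2n_i$ and $2n_i+2$ to drop out unless $2n_i\in\mathbb{Z}$; together with $n_i\notin\frac12+\mathbb{Z}$ and $n_i\geq-\frac12$ this forces $n_i\in\mathbb{N}$, and then the only admissible pole is the leading term $(z-\frac{\omega_i}{2})^{-2n_i}$, so $m_i=2n_i$. That every point of $\frac{\omega_i}{2}+\Lambda_\tau$ is a pole of the same order is immediate from the quasi-periodicity (\ref{qqq}): translation by $1$ fixes $\Omega_{12}$ and translation by $\tau$ only shifts it by the constant $-1$. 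Part (iii) is the identical argument at $\pm p$: from $I\sim\frac34(z\mp p)^{-2}$ the exponents of (\ref{152}) are $-\frac12,\frac32$, so the homogeneous exponents are the integers $-1,1,3$ with indicial polynomial $(k-1)(k+1)(k-3)$; the explicit $\frac34\wp(z-p)$ and $A\zeta(z-p)$ terms give $2\frac{\partial}{\partial\tau}I$ a pole of order at most $3$, and $k=0$ is again not a root, so the forced solution is holomorphic and $\Omega_{12}$ can only inherit the exponent $-1$, yielding a pole of order at most one.

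The main obstacle I anticipate is the local bookkeeping at the singular points: identifying the symmetric-square structure of (\ref{186}), computing the precise pole order of $2\frac{\partial}{\partial\tau}I$ from the heat equations (where the cancellation making $\frac{\partial}{\partial\tau}\wp$ regular at the origin is essential), and checking that the resonances at the nonnegative integer exponents do not spoil the count by introducing logarithmic terms. Here single-valuedness of $\Omega_{12}$, guaranteed by Lemma \ref{lemma}, is precisely what forbids logarithms; once each local picture is established, (i)--(iii) follow by reading off exponents and invoking $\Lambda_\tau$-periodicity via (\ref{qqq}).
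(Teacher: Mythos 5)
Your proposal is correct and follows essentially the same route as the paper: both arguments rest on single-valuedness of $\Omega_{12}$ (Lemma \ref{lemma}), the quasi-periodicity (\ref{qqq}) for the lattice translates, and a local Laurent/indicial analysis of equation (\ref{186}) at each singular point --- your exponents $-2n_i,\,1,\,2n_i+2$ are exactly the roots of the paper's leading-coefficient relation $m_0(m_0+1)(m_0+2)=4n_0(n_0+1)(m_0+1)$. The only nitpick is that your indicial polynomial $(k-1)(k+2n_i)(k-2n_i-2)$ does vanish at $k=0$ when $n_i=0$, but in that case the order-$3$ part of the inhomogeneity $2\frac{\partial}{\partial\tau}I$ (whose coefficient is a multiple of $n_i(n_i+1)$) vanishes as well, so your conclusion is unaffected.
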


\begin{proof}
(i) Since equation (\ref{186}) has singularities only at $\{ \pm p\left(
\tau \right)  ,\frac{\omega_{i}}{2},i=0,1,2,3\}+\Lambda_{\tau}$, $\Omega
_{12}\left(  \cdot;\tau \right)  $ is holomorphic for all $z\not \in \{ \pm
p(\tau),\frac{\omega_{i}}{2},i=0,1,2,3\}+\Lambda_{\tau}$. On the other hand,
if $z_{0}\in \{ \pm p\left(  \tau \right)  ,\frac{\omega_{i}}{2}%
,i=0,1,2,3\}+\Lambda_{\tau}$ is a singularity of $\Omega_{12}\left(
\cdot;\tau \right)  $, then by using (\ref{w}) and the local behavior of
$Y\left(  \cdot;\tau \right)  $ at $z_{0}$, it is easy to prove%
\[
\Omega_{12}\left(  z;\tau \right)  =\frac{c(\tau)}{(z-z_{0})^{m}}%
(1+\text{higher order term})\text{ near }z_{0}%
\]
for some $c(\tau)\not =0$ and $m\in \mathbb{C}$. Since $\Omega_{12}\left(
\cdot;\tau \right)  $ is single-valued, we conclude that $m\in \mathbb{N}$,
namely $z_{0}$ must be a pole of $\Omega_{12}\left(  \cdot;\tau \right)  $.
This proves (i).

The proof of (ii) and (iii) are similar, so we only prove (ii) for $i=0$.
Without loss of generality, we may assume $b_{1}=b_{2}=0$. Suppose $0$ is a
pole of $\Omega_{12}\left(  z;\tau \right)  $ with order $m_{0}\in \mathbb{N}$.
By (\ref{qqq}), it is obvious that for any $\left(  b_{1},b_{2}\right)
\in \mathbb{Z}^{2}$, $b_{1}+b_{2}\tau$ is also a pole with the same order
$m_{0}$. Suppose%
\begin{equation}
\Omega_{12}\left(  z;\tau \right)  =z^{-m_{0}}\left(  \sum_{k=0}^{\infty}%
c_{k}z^{k}\right)  =\frac{c_{0}}{z^{m_{0}}}+O\left(  \frac{1}{z^{m_{0}-1}%
}\right)  \text{ near }0, \label{190}%
\end{equation}
where $c_{0}\neq0$. Then we have%
\begin{equation}
\Omega_{12}^{\prime}(z;\tau)=-m_{0}\frac{c_{0}}{z^{m_{0}+1}}+O\left(  \frac
{1}{z^{m_{0}}}\right)  , \label{192}%
\end{equation}%
\begin{equation}
\Omega_{12}^{\prime \prime \prime}\left(  z;\tau \right)  =-m_{0}\left(
m_{0}+1\right)  \left(  m_{0}+2\right)  \frac{c_{0}}{z^{m_{0}+3}}+O\left(
\frac{1}{z^{m_{0}+2}}\right)  , \label{191}%
\end{equation}
and{\allowdisplaybreaks%
\begin{align}
I\left(  z;\tau \right)   &  =n_{0}\left(  n_{0}+1\right)  \frac{1}{z^{2}%
}\nonumber \\
&  +\left[  \sum_{i=1}^{3}n_{i}\left(  n_{i}+1\right)  \wp \left(  \frac
{\omega_{i}}{2}\right)  +\frac{3}{2}\wp \left(  p\right)  +2A\zeta \left(
p\right)  +B\right]  +O\left(  z\right) \nonumber \\
&  =n_{0}\left(  n_{0}+1\right)  \frac{1}{z^{2}}+D\left(  \tau \right)
+O\left(  z\right)  , \label{193}%
\end{align}
}where $D\left(  \tau \right)  $ is a constant depending on $\tau$. Thus%
\begin{equation}
I^{\prime}(z;\tau)=-2n_{0}\left(  n_{0}+1\right)  \frac{1}{z^{3}}+O\left(
1\right)  ,\text{ \  \ }\frac{\partial I}{\partial \tau}(z;\tau)=O\left(
1\right)  . \label{194}%
\end{equation}
Substituting (\ref{190})-(\ref{194}) into (\ref{186}), we easily obtain%
\[
m_{0}\left(  m_{0}+1\right)  \left(  m_{0}+2\right)  c_{0}=4n_{0}\left(
n_{0}+1\right)  \left(  m_{0}+1\right)  c_{0}.
\]
Since $n_{0}\geq-\frac{1}{2}$, we have $m_{0}=2n_{0}\in \mathbb{N}$. Together
with the assumption that $n_{0}\not \in \frac{1}{2}+\mathbb{Z}$, we have
$n_{0}\in \mathbb{N}$. This completes the proof.
\end{proof}

For the isomonodromic deformation of the 2nd order Fuchsian equation
(\ref{90}) on $\mathbb{CP}^{1}$, if the non-resonant condition $n_{i}%
\not \in \frac{1}{2}+\mathbb{Z}$ holds, then $\Omega_{12}$ is independent of
the choice of $M$-invariant fundamental solutions. See \cite{GP}. However, the
same conclusion is not true in our study of equations defined in tori; see
Remark \ref{rmk} below. The following lemma is to classify the structure of
solutions of (\ref{186}).

\begin{lemma}
\label{lemI}Under the assumption and notations of Lemma \ref{lem4-5}. Then

\begin{itemize}
\item[(i)] If $\tilde{Y}\left(  z;\tau \right)  $ is another $M$-invariant
fundamental solution of (\ref{153}), then $\Omega_{12}\left(  z;\tau \right)
-\tilde{\Omega}_{12}\left(  z;\tau \right)  $ is an elliptic function with
periods $1$ and $\tau$, and satisfies the following second symmetric product
equation of (\ref{152}):%
\begin{equation}
\Phi^{\prime \prime \prime}-4I\Phi^{\prime}-2I^{\prime}\Phi=0. \label{189}%
\end{equation}

\item[(ii)] Let $\Phi(z;\tau)$ be an elliptic solution of (\ref{189}). For any
$c\in \mathbb{C}$, define $\tilde{\Omega}_{12}\left(  z;\tau \right)  $ by%
\[
\tilde{\Omega}_{12}\left(  z;\tau \right)  \doteqdot \Omega_{12}\left(
z;\tau \right)  +c\Phi \left(  z;\tau \right)  .
\]
Then there exists an $M$-invariant fundamental solution $\tilde{Y}\left(
z;\tau \right)  $ of system (\ref{153}) such that $\tilde{\Omega}_{12}\left(
z;\tau \right)  $ is the (1,2) component of $\tilde{\Omega}\left(
z;\tau \right)  $ which is defined by $\tilde{Y}\left(  z;\tau \right)  $.
\end{itemize}
\end{lemma}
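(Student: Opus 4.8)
The plan is to handle the two parts by exploiting the linear structure of the isomonodromy equation (\ref{186}): part (i) follows by subtracting two instances of (\ref{186}), and part (ii) is a direct application of the sufficient direction of Theorem \ref{thm M1} to the modified datum $\Omega_{12}+c\Phi$.

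For part (i), I would first invoke the necessary direction of Theorem \ref{thm M}: each of the $M$-invariant fundamental solutions $Y$ and $\tilde{Y}$ produces, via (\ref{w}), a single-valued matrix $\Omega$, respectively $\tilde{\Omega}$, whose $(1,2)$ entry solves the inhomogeneous equation (\ref{186}) and obeys the quasi-periodicity (\ref{qqq}). Writing $\Phi\doteqdot\Omega_{12}-\tilde{\Omega}_{12}$ and subtracting the two copies of (\ref{186}), the inhomogeneous term $2\frac{\partial}{\partial\tau}I$ cancels and one is left with precisely (\ref{189}). Subtracting the two copies of (\ref{qqq}) gives $\Phi(z+1;\tau)=\Phi(z;\tau)$ and $\Phi(z+\tau;\tau)=\Phi(z;\tau)$, the two $-1$ translations cancelling; hence $\Phi$ has periods $1$ and $\tau$ and is elliptic, which settles (i).

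For part (ii), I would verify that $\tilde{\Omega}_{12}\doteqdot\Omega_{12}+c\Phi$ satisfies the hypotheses of Theorem \ref{thm M1} and then read off the fundamental solution it produces. Since $\Omega_{12}$ solves the inhomogeneous equation (\ref{186}) and $c\Phi$ solves its homogeneous counterpart (\ref{189}), the sum again solves (\ref{186}); and since $\Omega_{12}$ satisfies (\ref{qqq}) while $\Phi$ is elliptic with periods $1$ and $\tau$, the sum is single-valued and again satisfies (\ref{qqq}). Theorem \ref{thm M1} then reconstructs the full matrix $\tilde{\Omega}$ from $\tilde{\Omega}_{12}$ through the algebraic relations (\ref{qq}) and guarantees the integrability condition (\ref{185}). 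A short entrywise check confirms (\ref{204}): the diagonal entries equal $\mp\frac{1}{2}\tilde{\Omega}_{12}'$, whose $\tau$-translate is unchanged because $\tilde{\Omega}_{12}$ shifts only by a constant under $z\mapsto z+\tau$, while the $(2,1)$ entry $\tilde{\Omega}_{11}'+\tilde{\Omega}_{12}I$ acquires exactly the $-I$ needed to match $-Q$. Solving the now completely integrable Pfaffian system (\ref{184}) with this $\tilde{\Omega}$ produces a fundamental solution $\tilde{Y}$, invertible because $Q$ and $\tilde{\Omega}$ are traceless so that $\det\tilde{Y}$ is constant; the sufficient direction of Theorem \ref{thm M} then makes $\tilde{Y}$ an $M$-invariant solution of (\ref{153}). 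Finally, since $\tilde{Y}$ satisfies $\frac{\partial}{\partial\tau}\tilde{Y}=\tilde{\Omega}\tilde{Y}$, the matrix it defines via (\ref{w}) is exactly $\tilde{\Omega}$, whose $(1,2)$ entry is the prescribed $\tilde{\Omega}_{12}$.

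The one point requiring care is the bookkeeping in part (ii): one must confirm that the matrix reconstructed from $\tilde{\Omega}_{12}$ via (\ref{qq}) satisfies the quasi-periodicity (\ref{204}) in \emph{every} entry, so that Theorem \ref{thm M} applies verbatim and the solution it yields reproduces the prescribed $(1,2)$ entry rather than some other one. All of this reduces to relations already established for (\ref{qq}) and (\ref{186}), so no computation beyond these verifications should be needed.
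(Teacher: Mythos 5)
Your proposal is correct and follows essentially the same route as the paper: part (i) by subtracting the two copies of (\ref{186}) and (\ref{qqq}) obtained from Theorem \ref{thm M}, and part (ii) by checking that $\Omega_{12}+c\Phi$ is single-valued and still satisfies (\ref{186}) and (\ref{qqq}), then invoking Theorem \ref{thm M1}. The extra bookkeeping you flag for part (ii) is already absorbed into the proof of Theorem \ref{thm M1}, so the paper simply cites that theorem where you re-verify its steps.
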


\begin{proof}
(i) This follows directly from that $\Omega_{12}\left(  z;\tau \right)  $ and
$\tilde{\Omega}_{12}\left(  z;\tau \right)  $ are both single-valued and
satisfy (\ref{186}) and (\ref{qqq}).

(ii) It is trivial to see that $\tilde{\Omega}_{12}\left(  z;\tau \right)  $
satisfies (\ref{186}) and (\ref{qqq}). Moreover, since both $\Phi \left(
z;\tau \right)  $ and $\Omega_{12}\left(  z;\tau \right)  $ are single-valued,
$\tilde{\Omega}_{12}\left(  z;\tau \right)  $ is single-valued. By Theorem
\ref{thm M1}, there exists an $M$-invariant fundamental solution $\tilde
{Y}\left(  z;\tau \right)  $ of system (\ref{153}) such that $\tilde{\Omega
}\left(  z;\tau \right)  $ is defined by $\tilde{Y}\left(  z;\tau \right)  $.
\end{proof}

\begin{lemma}
\label{lem4-4}Under the assumption and notations of Lemma \ref{lem4-5}. Then
there exists an $M$-invariant fundamental solution $\tilde{Y}\left(
z;\tau \right)  $ such that
\[
\tilde{\Omega}_{12}\left(  z;\tau \right)  =-\Omega_{12}\left(  -z;\tau \right)
.
\]

\end{lemma}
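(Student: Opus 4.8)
The plan is to exploit the evenness of the potential $I(z;\tau)$ in $z$ and to observe that the reflection $\Omega_{12}(z;\tau)\mapsto-\Omega_{12}(-z;\tau)$ carries solutions of the isomonodromy equation (\ref{186}) to solutions of the same equation. First I would record that every term of $I(z;\tau)$ in (\ref{154}) is even in $z$: because $\frac{\omega_i}{2}$ are half-periods, $\wp(z+\frac{\omega_i}{2})$ is even; $\wp(z+p)+\wp(z-p)$ is manifestly even; and since $\zeta$ is odd, $\zeta(z+p)-\zeta(z-p)$ is even as well. Hence $I(-z;\tau)=I(z;\tau)$, which forces $I'$ to be odd and $\partial_\tau I$ to be even in $z$.

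Next I would set $\Psi(z;\tau):=-\Omega_{12}(-z;\tau)$ and verify that $\Psi$ is again a single-valued solution of (\ref{186}) subject to (\ref{qqq}). Writing $w=-z$ and using the chain rule gives $\Psi'=\Omega_{12}'(w)$, $\Psi''=-\Omega_{12}''(w)$ and $\Psi'''=\Omega_{12}'''(w)$; substituting these together with $I(z)=I(w)$, $I'(z)=-I'(w)$ and $\partial_\tau I(z)=\partial_\tau I(w)$ into (\ref{186}) reproduces exactly (\ref{186}) evaluated at $w$, which vanishes. Single-valuedness of $\Psi$ is inherited from that of $\Omega_{12}$ (Lemma \ref{lem4-5}(i)) since $z\mapsto-z$ is single-valued. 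For the quasi-periods, the relations $\Omega_{12}(z+1)=\Omega_{12}(z)$ and $\Omega_{12}(z+\tau)=\Omega_{12}(z)-1$ give $\Omega_{12}(w-1)=\Omega_{12}(w)$ and $\Omega_{12}(w-\tau)=\Omega_{12}(w)+1$; feeding these into $\Psi(z+1)=-\Omega_{12}(-z-1)$ and $\Psi(z+\tau)=-\Omega_{12}(-z-\tau)$ yields $\Psi(z+1)=\Psi(z)$ and $\Psi(z+\tau)=\Psi(z)-1$, i.e.\ precisely (\ref{qqq}).

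Then I would form the difference $\Phi(z;\tau):=\Psi(z;\tau)-\Omega_{12}(z;\tau)=-\Omega_{12}(-z;\tau)-\Omega_{12}(z;\tau)$. Since $\Psi$ and $\Omega_{12}$ both solve the inhomogeneous equation (\ref{186}), their difference cancels the inhomogeneous term $2\partial_\tau I$ and solves the homogeneous second symmetric product equation (\ref{189}); and since both satisfy (\ref{qqq}) with the same $-1$ shift, $\Phi$ is doubly periodic, hence an elliptic solution of (\ref{189}). Finally I would apply Lemma \ref{lemI}(ii) with this $\Phi$ and $c=1$: it produces an $M$-invariant fundamental solution $\tilde{Y}(z;\tau)$ whose associated component satisfies $\tilde{\Omega}_{12}=\Omega_{12}+\Phi=\Psi=-\Omega_{12}(-z;\tau)$, which is exactly the claim.

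The only delicate point is the sign bookkeeping in the second step: one must check that the reflection $z\mapsto-z$ combined with the overall sign on $\Omega_{12}$ leaves (\ref{186}) invariant, which hinges entirely on $I$ being even (so that the odd factor $I'$ and the factor $\Omega_{12}$ each contribute a sign that cancels in the lower-order terms), and that the quasi-period shift in (\ref{qqq}) is odd under the reflection so that $\Psi$ inherits the correct normalization. Everything else reduces to a direct invocation of the structural Lemma \ref{lemI}.
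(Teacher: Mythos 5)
Your proof is correct, but it travels a different road than the paper's. You work entirely at the level of the scalar function $\Omega_{12}$: you check by the chain rule that $\Psi(z;\tau):=-\Omega_{12}(-z;\tau)$ again solves (\ref{186}) and satisfies (\ref{qqq}) (the whole computation resting on the evenness of $I(\cdot;\tau)$, hence the oddness of $I'$ and the evenness of $\partial_\tau I$), and then you feed the elliptic difference $\Phi=\Psi-\Omega_{12}$ into Lemma \ref{lemI}(ii) with $c=1$ to manufacture the required $M$-invariant $\tilde{Y}$. The paper instead works at the level of the fundamental solution: it defines $\tilde{Y}$ explicitly by the reflection
\[
\tilde{Y}\left(  z;\tau \right)  =\left(
\begin{matrix}
y_{1}\left(  -z;\tau \right)  & y_{2}\left(  -z;\tau \right) \\
-y_{1}^{\prime}\left(  -z;\tau \right)  & -y_{2}^{\prime}\left(  -z;\tau \right)
\end{matrix}
\right),
\]
which is again a fundamental solution of (\ref{153}) precisely because $I$ is even, and then reads off $\tilde{\Omega}_{12}(z;\tau)=-\Omega_{12}(-z;\tau)$ from the identity $\det\tilde{Y}(z;\tau)=-\det Y(-z;\tau)$, finally deducing $M$-invariance from (\ref{186}) and (\ref{qqq}) as you do. The paper's construction buys an explicit formula for $\tilde{Y}$ in terms of $Y$, which makes the origin of the sign and the reflection transparent; your argument avoids all matrix bookkeeping and reduces everything to the symmetry of equation (\ref{186}) plus the structural Lemma \ref{lemI}(ii) (indeed you could even bypass $\Phi$ and invoke Theorem \ref{thm M1} directly on $\Psi$), at the cost of only obtaining $\tilde{Y}$ by an existence statement rather than a formula. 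Both arguments hinge on the same single fact, $I(-z;\tau)=I(z;\tau)$, and both are complete.
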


\begin{proof}
Recall that $Y\left(  z;\tau \right)  =\left(
\begin{matrix}
y_{1}\left(  z;\tau \right)  & y_{2}\left(  z;\tau \right) \\
y_{1}^{\prime}\left(  z;\tau \right)  & y_{2}^{\prime}\left(  z;\tau \right)
\end{matrix}
\right)  $ is an $M$-invariant fundamental solution of (\ref{153}) in a
neighborhood $U_{q_{0}}$ of $q_{0}$. Then for $z\in-U_{q_{0}}$, a neighborhood
of $-q_{0}$, we define
\[
\tilde{Y}\left(  z;\tau \right)  :=\left(
\begin{matrix}
y_{1}\left(  -z;\tau \right)  & y_{2}\left(  -z;\tau \right) \\
-y_{1}^{\prime}\left(  -z;\tau \right)  & -y_{2}^{\prime}\left(  -z;\tau
\right)
\end{matrix}
\right)  .
\]
It is easy to see that $\tilde{Y}\left(  z;\tau \right)  $ is a fundamental
solution to (\ref{153}) in $-U_{q_{0}}$. Define $\tilde{\Omega}\left(
z;\tau \right)  $ by $\tilde{Y}\left(  z;\tau \right)  $, then we have%
\[
\det \tilde{Y}\left(  z;\tau \right)  \cdot \tilde{\Omega}_{12}\left(
z;\tau \right)  =y_{1}\left(  -z;\tau \right)  \frac{\partial}{\partial \tau
}y_{2}\left(  -z;\tau \right)  -y_{2}\left(  -z;\tau \right)  \frac{\partial
}{\partial \tau}y_{1}\left(  -z;\tau \right)  ,
\]
and since $\det \tilde{Y}\left(  z;\tau \right)  =-\det Y\left(  -z;\tau \right)
$, we obtain%
\begin{equation}
\tilde{\Omega}_{12}\left(  z;\tau \right)  =-\Omega_{12}\left(  -z;\tau \right)
\label{181}%
\end{equation}
for $z\in-U_{q_{0}}$. Since $\Omega_{12}$ is globally defined and
single-valued, by analytic continuation, (\ref{181}) holds true globally.
Thus, $\tilde{\Omega}_{12}$ is globally defined and single-valued. Moreover,
$\tilde{\Omega}_{12}\left(  z;\tau \right)  $ satisfies (\ref{186}) and
(\ref{qqq}) which implies that $\tilde{Y}\left(  z;\tau \right)  $ is
$M$-invariant. This completes the proof.
\end{proof}

\begin{proof}
[Proof of Theorem \ref{thmA}]Since the generalized Lam\'{e} equation
(\ref{152}) with (\ref{154}) is monodromy preserving as $\tau$ deforms, by
Theorem \ref{thm M1} and Lemma \ref{lem4-5}, there exists a single-valued
meromorphic function $\hat{\Omega}_{12}\left(  z;\tau \right)  $ satisfying
(\ref{186}) and (\ref{qqq}). Define $\Omega_{12}\left(  z;\tau \right)  $ by
\begin{equation}
\Omega_{12}\left(  z;\tau \right)  \doteqdot \frac{1}{2}\left[  \hat{\Omega
}_{12}\left(  z;\tau \right)  -\hat{\Omega}_{12}\left(  -z;\tau \right)
\right]  . \label{206}%
\end{equation}
To prove Theorem \ref{thmA}, we divide it into three steps:

\textbf{Step 1.} We prove that there exists an $M$-invariant fundamental
solution $Y\left(  z;\tau \right)  $ of system (\ref{153}) such that\emph{ }%
\begin{equation}
\Omega \left(  z;\tau \right)  =\frac{\partial}{\partial \tau}Y\left(
z;\tau \right)  \cdot Y^{-1}\left(  z;\tau \right)  \label{c3}%
\end{equation}
and $\Omega_{12}\left(  z;\tau \right)  $ is the (1,2) component of
$\Omega \left(  z;\tau \right)  $.

Let
\[
\Phi(z;\tau)=-\frac{1}{2}\left[  \hat{\Omega}_{12}\left(  z;\tau \right)
+\hat{\Omega}_{12}\left(  -z;\tau \right)  \right]  .
\]
By Lemmas \ref{lem4-4} and \ref{lemI}, $\Phi$ is an elliptic solution of
equation (\ref{189}) and%
\[
\Omega_{12}\left(  z;\tau \right)  =\hat{\Omega}_{12}\left(  z;\tau \right)
+\Phi(z;\tau)\text{.}%
\]
By Lemma \ref{lemI} (ii), there exists an $M$-invariant fundamental solution
$Y\left(  z;\tau \right)  $ of system (\ref{153}) such that (\ref{c3}) holds.

\textbf{Step 2. }We prove that $\Omega_{12}\left(  z;\tau \right)  $ is an odd
meromorphic function and only has poles at $\left \{  \pm p\right \}
+\Lambda_{\tau}$ of order at most one. Furthermore, $\Omega_{12}^{\prime
}\left(  z;\tau \right)  $ is an even elliptic function.

Clearly (\ref{206}) and Lemma \ref{lem4-5} imply that $\Omega_{12}\left(
z;\tau \right)  $ is an odd meromorphic function. Now we claim that:%
\begin{equation}
\Omega_{12}\left(  z;\tau \right)  \text{ only has poles at}\left \{  \pm
p\right \}  +\Lambda_{\tau}\text{ of order at most one.} \label{213}%
\end{equation}
By Lemma \ref{lem4-5} (i), $\Omega_{12}\left(  z;\tau \right)  $ is holomorphic
for all $z\not \in \{ \pm p,\frac{\omega_{i}}{2},i=0,1,2,3\}+\Lambda_{\tau}$.
If $\Omega_{12}\left(  z;\tau \right)  $ has a pole at $\frac{\omega_{i}}%
{2}+\Lambda_{\tau}$, then the order of the pole is $2n_{i}\in2\mathbb{N}$ by
Lemma \ref{lem4-5} (ii), which yields a contradiction to the fact that
$\Omega_{12}\left(  z;\tau \right)  $ is odd and satisfies (\ref{qqq}).

\textbf{Step 3. }We prove that $\Omega_{12}\left(  z;\tau \right)  $ is of the
form (\ref{c}):%
\[
\Omega_{12}\left(  z;\tau \right)  =-\frac{i}{4\pi}\left(  \zeta \left(
z-p\right)  +\zeta \left(  z+p\right)  -2z\eta_{1}\right)  .
\]

By Step 2\textbf{ }and (\ref{213}), we know that $\Omega_{12}^{\prime}\left(
z;\tau \right)  $ must be of the following form%
\[
\Omega_{12}^{\prime}\left(  z;\tau \right)  =-C\left(  \wp \left(  z+p\right)
+\wp \left(  z-p\right)  \right)  +D
\]
for some constants $C,D\in \mathbb{C}$. Thus by integration, we get
\[
\Omega_{12}\left(  z;\tau \right)  =C\left(  \zeta \left(  z+p\right)
+\zeta \left(  z-p\right)  \right)  +Dz+E
\]
for some $E\in \mathbb{C}$. Since $\Omega_{12}\left(  z;\tau \right)  $ is odd,
we have $E=0$. Furthermore,%
\[
\Omega_{12}\left(  z+1;\tau \right)  =\Omega_{12}\left(  z;\tau \right)
+2C\eta_{1}+D,
\]
and%
\[
\Omega_{12}\left(  z+\tau;\tau \right)  =\Omega_{12}\left(  z;\tau \right)
+2C\eta_{2}+D\tau.
\]
By (\ref{204}), we have%
\[
2C\eta_{1}+D=0,\text{ }2C\eta_{2}+D\tau=-1.
\]
By Legendre relation $\tau \eta_{1}-\eta_{2}=2\pi i$, we have%
\[
C=\frac{-i}{4\pi}\text{ \ and \ }D=\frac{i}{2\pi}\eta_{1},
\]
which implies (\ref{c}). This completes the proof.
\end{proof}

\begin{corollary}
Under the assumption and notations of Lemma \ref{lem4-5} and assume
$n_{i}\not \in \mathbb{Z}$ for some $i\in \{0,1,2,3\}$. Then $\Omega
_{12}\left(  z;\tau \right)  $ is unique, i.e., $\Omega_{12}\left(
z;\tau \right)  $ is independent of the choice of $M$-invariant solution
$Y\left(  z;\tau \right)  $ of system (\ref{153}).
\end{corollary}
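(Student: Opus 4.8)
The plan is to reduce the corollary to a non-existence statement for the second symmetric product equation (\ref{189}) and then eliminate the putative extra solution by a parity-plus-residue argument. By Lemma \ref{lemI}(i), if $Y$ and $\tilde Y$ are two $M$-invariant fundamental solutions with associated $\Omega_{12}$ and $\tilde\Omega_{12}$, then $\Phi:=\Omega_{12}-\tilde\Omega_{12}$ is an elliptic (periods $1,\tau$) solution of (\ref{189}); conversely, Lemma \ref{lemI}(ii) produces an $M$-invariant solution realizing $\Omega_{12}+c\Phi$ for every elliptic solution $\Phi$. Hence $\Omega_{12}$ is independent of the choice of $Y$ if and only if the \emph{only} elliptic solution of (\ref{189}) is $\Phi\equiv0$, and this is what I would prove.

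First I would record a symmetry: since $I(z)$ is even and $I'(z)$ is odd, a direct substitution shows $\Phi(z)$ solves (\ref{189}) if and only if $\Phi(-z)$ does, so the space of elliptic solutions is invariant under $z\mapsto -z$ and I may split $\Phi=\Phi_{\mathrm{ev}}+\Phi_{\mathrm{odd}}$. Next I examine the distinguished point $\frac{\omega_i}{2}$, where the hypothesis $n_i\notin\mathbb{Z}$ together with the standing assumption $n_i\notin\frac12+\mathbb{Z}$ gives $2n_i\notin\mathbb{Z}$. Substituting a Laurent series into (\ref{189}) and using $I\sim n_i(n_i+1)/w^2$ with $w=z-\frac{\omega_i}{2}$, the lowest-order balance yields the indicial equation $(m-1)\big(m^2-2m-4n_i(n_i+1)\big)=0$, whose roots are $-2n_i,\,1,\,2n_i+2$ (the symmetric square of the exponents $-n_i,\,n_i+1$ of (\ref{152})). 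Since $\Phi$ is elliptic, hence meromorphic and single-valued near $\frac{\omega_i}{2}$, its leading exponent must be an \emph{integer} indicial root, and as $-2n_i,\,2n_i+2\notin\mathbb{Z}$ this forces the exponent $1$. Because $\frac{\omega_i}{2}$ is a $2$-torsion point, global oddness (resp. evenness) becomes local oddness (resp. evenness) in $w$, so an even local solution has only even powers of $w$; as $1$ is odd, $\Phi_{\mathrm{ev}}$ has no admissible leading term at $\frac{\omega_i}{2}$ and therefore vanishes identically. Thus $\Phi$ is \emph{odd}.

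It remains to show that an odd elliptic solution $\Phi$ is zero. At every half-period $\frac{\omega_k}{2}$ the same computation gives indicial roots $-2n_k,\,1,\,2n_k+2$, whose outer two are even integers when $n_k\in\mathbb{N}$ and non-integers when $2n_k\notin\mathbb{Z}$, while the middle one is $1$. Since $\Phi$ is single-valued and odd, its leading exponent at $\frac{\omega_k}{2}$ must be an odd indicial root, namely $1$, so $\Phi$ is holomorphic there; in particular $\Phi$ has \emph{no} poles at any half-period, even when $n_k\in\mathbb{N}$. The only remaining possible poles are at $\pm p$ (with $p\notin E_\tau[2]$), where the indicial roots are $-1,1,3$, so $\Phi$ has at worst a simple pole; by oddness the residues at $p$ and $-p$ coincide, and since the residues of an elliptic function sum to zero, both vanish. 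Hence $\Phi$ is holomorphic, thus constant, and being odd it is $\equiv0$.

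I expect the main obstacle to be ruling out poles of $\Phi$ at the half-periods $\frac{\omega_k}{2}$ with $n_k\in\mathbb{N}$, where (\ref{189}) genuinely admits single-valued polar solutions (the exponent $-2n_k\le -2$ produces poles). Indeed, when all $n_k\in\mathbb{Z}$ the equation does carry a nonzero elliptic solution — for $n_k=0$ it is the product $y_{a_1}y_{-a_1}$ appearing before (\ref{155-1}) — so the argument must use $n_i\notin\mathbb{Z}$ essentially. It does so precisely by forcing $\Phi$ to be odd at $\frac{\omega_i}{2}$; once $\Phi$ is odd, the even polar exponents $-2n_k$ at the other half-periods become inadmissible, and the residue theorem at $\pm p$ then closes the argument.
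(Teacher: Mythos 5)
Your proof is correct, and it shares its key lemma with the paper's argument but finishes along a genuinely different route. The paper does not work with the difference $\Omega_{12}-\tilde{\Omega}_{12}$ at all: it sets $\Phi(z)=\Omega_{12}(z)+\Omega_{12}(-z)$, which by Lemmas \ref{lem4-4} and \ref{lemI} is an \emph{even} elliptic solution of (\ref{189}), kills it by exactly your exponent-versus-parity argument at $\frac{\omega_i}{2}$ (the integer indicial root $1$ forces a simple zero, incompatible with $\Phi'(\frac{\omega_i}{2})=0$), concludes that every admissible $\Omega_{12}$ is odd, and then invokes Theorem \ref{thmA} — whose proof uses the pole structure from Lemma \ref{lem4-5}, the quasi-periodicity (\ref{qqq}) and the Legendre relation — to pin every $\Omega_{12}$ down to the single explicit formula (\ref{c}). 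You instead prove the stronger statement that (\ref{189}) has \emph{no} nonzero elliptic solution under the hypothesis, which requires the extra step of eliminating the odd part; your treatment of that step (leading exponent must be an odd indicial root, hence $1$ at every half-period even when $n_k\in\mathbb{N}$, at worst simple poles at $\pm p$ with equal residues, and the residue theorem) is sound, including the subtle points that the leading exponent of a meromorphic solution is always an indicial root even in the resonant integer case and that global parity becomes local parity at $2$-torsion points. What the paper's route buys is brevity, since Theorem \ref{thmA} is already in hand; what yours buys is a self-contained uniqueness proof that never needs the explicit formula (\ref{c}) or the quasi-periodic normalization, only Lemma \ref{lemI} and elementary facts about elliptic functions.
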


\begin{proof}
For any $M$-invariant solution $Y\left(  z;\tau \right)  $ of system
(\ref{153}), by Theorem \ref{thm M1}, there exists a single-valued function
$\Omega_{12}\left(  z;\tau \right)  $ satisfying (\ref{186}) and (\ref{204}).
Let%
\[
\Phi \left(  z;\tau \right)  =\Omega_{12}\left(  z;\tau \right)  +\Omega
_{12}\left(  -z;\tau \right)  .
\]
If $\Phi \left(  z;\tau \right)  \not \equiv 0$, then $\Phi \left(
z;\tau \right)  $ is an even elliptic solution of (\ref{189}). Without loss of
generality, we may consider the case $n_{1}\not \in \mathbb{Z}$. Then
$2n_{1}\not \in \mathbb{Z}$ since $n_{1}\not \in \frac{1}{2}+\mathbb{Z}$.
Since the local exponents of (\ref{189}) at $\frac{\omega_{1}}{2}$ are
$-2n_{1},1,2n_{1}+2$ and $\Phi \left(  z;\tau \right)  $ is elliptic, the local
exponent of $\Phi \left(  z;\tau \right)  $ at $z=\frac{\omega_{1}}{2}$ must be
$1$, i.e., $\frac{\omega_{1}}{2}$ is a simple zero. But again by $\Phi \left(
z;\tau \right)  $ is even elliptic, we have $\Phi^{\prime}\left(  \frac
{\omega_{1}}{2};\tau \right)  =0$, which leads to a contradiction. Thus,
$\Phi \left(  z;\tau \right)  \equiv0$, i.e., $\Omega_{12}\left(  z;\tau \right)
$ is odd. Then by Theorem \ref{thmA}, $\Omega_{12}\left(  z;\tau \right)  $ is
of the form (\ref{c}).
\end{proof}

\begin{remark}
\label{rmk}When $n_{i}\in \mathbb{Z}$ for all $i=0,1,2,3$, $\Omega_{12}\left(
z;\tau \right)  $ might not be unique. For example, when $n_{i}=0$ for all
$i=0,1,2,3$, we define%
\[
\Phi \left(  z;\tau \right)  \doteqdot \zeta \left(  z+p|\tau \right)
-\zeta \left(  z-p|\tau \right)  -\zeta \left(  2p|\tau \right)  -2A,
\]
then $\Phi \left(  z;\tau \right)  $ is an even elliptic solution of
(\ref{189}). So for any $c\in \mathbb{C}$,%
\[
\tilde{\Omega}_{12}\left(  z;\tau \right)  \doteqdot \frac{-i}{4\pi}\left(
\zeta \left(  z-p\right)  +\zeta \left(  z+p\right)  -2z\eta_{1}\right)
+c\Phi \left(  z;\tau \right)
\]
satisfies (\ref{186}) and (\ref{qqq}). By Lemma \ref{lemI}, there exists an
$M$-invariant solution $\tilde{Y}\left(  z;\tau \right)  $ such that
$\tilde{\Omega}\left(  z;\tau \right)  $ is defined by $\tilde{Y}\left(
z;\tau \right)  $.\smallskip
\end{remark}

Define $U\left(  z;\tau \right)  $ by%
\[
U\left(  z;\tau \right)  \doteqdot \Omega_{12}^{\prime \prime \prime}\left(
z;\tau \right)  -4I\left(  z;\tau \right)  \Omega_{12}^{\prime}\left(
z;\tau \right)  -2I^{\prime}\left(  z;\tau \right)  \Omega_{12}\left(
z;\tau \right)  +2\frac{\partial}{\partial \tau}I\left(  z;\tau \right)  ,
\]
where $\Omega_{12}\left(  z;\tau \right)  $ is given in Theorem \ref{thmA}
(\ref{c}), i.e.,%
\[
\Omega_{12}\left(  z;\tau \right)  =-\frac{i}{4\pi}\left(  \zeta \left(
z-p\right)  +\zeta \left(  z+p\right)  -2z\eta_{1}\right)  .
\]
In order to prove Theorem \ref{thm4-2}, we need the following local expansions
for $\Omega_{12}\left(  z;\tau \right)  $ and $I\left(  z;\tau \right)  $ at $p$
and $\frac{\omega_{k}}{2}$, $k=0,1,2,3$, respectively.

\begin{lemma}
\label{lem-expand}$\Omega_{12}\left(  z;\tau \right)  $ and $I\left(
z;\tau \right)  $ have local expansions at $p$ and $\frac{\omega_{k}}{2}$,
$k=0,1,2,3$ as follows:

\begin{itemize}
\item[(i)] Near $p$, let $u=z-p$. Then we have%
\begin{equation}
\Omega_{12}\left(  z;\tau \right)  =\frac{-i}{4\pi}\left(
\begin{array}
[c]{l}%
u^{-1}+\left(  \zeta \left(  2p\right)  -2p\eta_{1}\right)  -\left(  \wp \left(
2p\right)  +2\eta_{1}\right)  u\\
-\frac{1}{2}\wp^{\prime}\left(  2p\right)  u^{2}-\frac{1}{6}\left(
\frac{g_{2}}{10}+\wp^{\prime \prime}\left(  2p\right)  \right)  u^{3}+O\left(
u^{4}\right)
\end{array}
\right)  , \label{390}%
\end{equation}
and%
\begin{equation}
I\left(  z;\tau \right)  =\frac{3}{4}u^{-2}-Au^{-1}+A^{2}+H_{1}\left(
\tau \right)  u+H_{2}\left(  \tau \right)  u^{2}+O\left(  u^{3}\right)  ,
\label{391}%
\end{equation}
where%
\begin{equation}
H_{1}\left(  \tau \right)  =\sum_{k=0}^{3}n_{k}\left(  n_{k}+1\right)
\wp^{\prime}\left(  p+\frac{\omega_{k}}{2}\right)  +\frac{3}{4}\wp^{\prime
}\left(  2p\right)  -A\wp \left(  2p\right)  , \label{404}%
\end{equation}
and%
\[
H_{2}\left(  \tau \right)  =\frac{1}{2}\left[  \sum_{k=0}^{3}n_{k}\left(
n_{k}+1\right)  \wp^{\prime \prime}\left(  p+\frac{\omega_{k}}{2}\right)
+\frac{3}{4}\wp^{\prime \prime}\left(  2p\right)  +\frac{3}{40}g_{2}%
-A\wp^{\prime}\left(  2p\right)  \right]  .
\]

\item[(ii)] Near $\frac{\omega_{k}}{2}$, $k\in \{0,1,2,3\}$, let $u_{k}%
=z-\frac{\omega_{k}}{2}$. Then we have%
\begin{equation}
\Omega_{12}\left(  z;\tau \right)  =\frac{i}{4\pi}\left[
\begin{array}
[c]{l}%
-\left(  \zeta \left(  \frac{\omega_{k}}{2}+p\right)  +\zeta \left(
\frac{\omega_{k}}{2}-p\right)  -\omega_{k}\eta_{1}\right) \\
+2\left(  \wp \left(  \frac{\omega_{k}}{2}+p\right)  +\eta_{1}\right)
u_{k}+O\left(  u_{k}^{3}\right)
\end{array}
\right]  , \label{392}%
\end{equation}
and%
\begin{equation}
I\left(  z;\tau \right)  =n_{k}\left(  n_{k}+1\right)  u_{k}^{-2}+\Lambda
_{k}\left(  \tau \right)  +O\left(  u_{k}^{2}\right)  , \label{393}%
\end{equation}
where{\allowdisplaybreaks%
\begin{align}
\Lambda_{k}\left(  \tau \right)  =  &  \sum_{j\not =k}^{3}n_{j}\left(
n_{j}+1\right)  \wp \left(  \frac{\omega_{k}+\omega_{j}}{2}\right)  +\frac
{3}{2}\wp \left(  \frac{\omega_{k}}{2}+p\right) \label{406}\\
&  +A\left(  \tau \right)  \left(  \zeta \left(  \frac{\omega_{k}}{2}+p\right)
-\zeta \left(  \frac{\omega_{k}}{2}-p\right)  \right)  +B\left(  \tau \right)
.\nonumber
\end{align}
}
\end{itemize}
\end{lemma}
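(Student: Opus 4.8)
The plan is to read off both local expansions directly from the defining formulas
\[
\Omega_{12}(z;\tau)=-\frac{i}{4\pi}\bigl(\zeta(z-p)+\zeta(z+p)-2z\eta_{1}\bigr)
\]
and
\[
I(z;\tau)=\sum_{i=0}^{3}n_{i}(n_{i}+1)\wp\Bigl(z+\tfrac{\omega_{i}}{2}\Bigr)+\tfrac{3}{4}\bigl(\wp(z+p)+\wp(z-p)\bigr)+A\bigl(\zeta(z+p)-\zeta(z-p)\bigr)+B,
\]
by substituting the relevant local coordinate and inserting the standard Laurent expansions at the origin,
\[
\zeta(u)=\frac{1}{u}-\frac{g_{2}}{60}u^{3}+O(u^{5}),\qquad \wp(u)=\frac{1}{u^{2}}+\frac{g_{2}}{20}u^{2}+O(u^{4}),
\]
together with $\zeta'=-\wp$, $\zeta''=-\wp'$ and $\zeta'''=-\wp''$. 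All the remaining building blocks are regular at the expansion point and are handled by ordinary Taylor expansion; the computation is elementary, so the only real work is the bookkeeping and two algebraic simplifications described below.

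For part (i) I would set $u=z-p$. The singular pieces come solely from $\tfrac34\wp(z-p)=\tfrac34\wp(u)$ and $-A\zeta(z-p)=-A\zeta(u)$, which produce the $\tfrac34u^{-2}$ and $-Au^{-1}$ terms of (\ref{391}); every other term is Taylor-expanded about $2p$ (for the arguments $z\pm p$) or about $p+\tfrac{\omega_{i}}{2}$ (for the Lam\'e term). Collecting powers of $u$ yields the coefficients $H_{1}$ and $H_{2}$ verbatim, and likewise gives (\ref{390}) for $\Omega_{12}$ after noting that the $u^{3}$ coefficient combines $-\tfrac{g_{2}}{60}$ from $\zeta(u)$ with $-\tfrac16\wp''(2p)$ from $\zeta(2p+u)$. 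The one point deserving attention is the constant ($u^{0}$) term of $I$: a priori it equals $\tfrac34\wp(2p)+A\zeta(2p)+\sum_{i}n_{i}(n_{i}+1)\wp(p+\tfrac{\omega_{i}}{2})+B$, and it collapses to exactly $A^{2}$ precisely because of the apparent-singularity relation (\ref{101}) for $B$; thus I would invoke Lemma \ref{lem-apparent} at this step.

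For part (ii) I would set $u_{k}=z-\tfrac{\omega_{k}}{2}$. Since $\omega_{k}\in\Lambda_{\tau}$, the term $n_{k}(n_{k}+1)\wp(z+\tfrac{\omega_{k}}{2})=n_{k}(n_{k}+1)\wp(u_{k})$ supplies the single pole $n_{k}(n_{k}+1)u_{k}^{-2}$, and because $\wp(u_{k})$ is even it contributes no $u_{k}^{-1}$ or $u_{k}^{1}$ term. For $j\neq k$ the point $\tfrac{\omega_{k}+\omega_{j}}{2}$ is a half-period, where $\wp'$ vanishes, so those Taylor expansions are even in $u_{k}$; this removes the would-be first-order term there. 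The crucial simplification is the cancellation of the remaining odd-order contributions from the $z\pm p$ terms, which rests on the two symmetry identities
\[
\wp\Bigl(\tfrac{\omega_{k}}{2}-p\Bigr)=\wp\Bigl(\tfrac{\omega_{k}}{2}+p\Bigr),\qquad \wp'\Bigl(\tfrac{\omega_{k}}{2}-p\Bigr)=-\wp'\Bigl(\tfrac{\omega_{k}}{2}+p\Bigr),
\]
both of which follow from the evenness of $\wp$, the oddness of $\wp'$, and $\Lambda_{\tau}$-periodicity together with $\omega_{k}\in\Lambda_{\tau}$. The first identity makes the $u_{k}^{0}$ terms add up to $\tfrac32\wp(\tfrac{\omega_{k}}{2}+p)$ in $\Lambda_{k}$ and kills the $u_{k}$ term from $A(\zeta(z+p)-\zeta(z-p))$; the second cancels the $u_{k}$ term from $\tfrac34(\wp(z+p)+\wp(z-p))$, establishing the $O(u_{k}^{2})$ error in (\ref{393}), and it likewise annihilates the $u_{k}^{2}$ term of $\Omega_{12}$, giving the $O(u_{k}^{3})$ error in (\ref{392}). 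The main obstacle is thus not conceptual but organizational: keeping the many Taylor coefficients straight and recognizing exactly where (\ref{101}) and the two parity identities must be applied.
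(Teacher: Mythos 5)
Your proposal is correct and follows essentially the same route as the paper, which likewise obtains both expansions by direct computation from the Laurent series of $\zeta$ and $\wp$ at the origin together with Taylor expansion of the regular terms. The two points you single out — that the constant term of $I$ near $p$ collapses to $A^{2}$ only via the apparent-singularity relation (\ref{101}), and that the odd-order terms in part (ii) cancel by the half-period parity identities — are exactly the (implicit) ingredients of the paper's computation, so nothing is missing.
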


\begin{proof}
Recall the following expansions:%
\begin{equation}
\zeta \left(  u\right)  =\frac{1}{u}-\frac{g_{2}}{60}u^{3}-\frac{g_{3}}%
{140}u^{5}+O\left(  u^{7}\right)  , \label{e1}%
\end{equation}%
\begin{equation}
\wp \left(  u\right)  =\frac{1}{u^{2}}+\frac{g_{2}}{20}u^{2}+\frac{g_{3}}%
{28}u^{4}+O\left(  u^{6}\right)  . \label{e2}%
\end{equation}
The proof follows from a direct computation by using (\ref{e1}) and (\ref{e2}).
\end{proof}

By using Lemma \ref{lem-expand}, we have

\begin{lemma}
\label{lem-U}$U\left(  \cdot;\tau \right)  $ is an even elliptic function and
has poles only at $\pm p$ of order at most $3$. More precisely, $U\left(
z;\tau \right)  $ is expressed as follows:{\allowdisplaybreaks%
\begin{align}
U\left(  z;\tau \right)  =  &  L\left(  \tau \right)  \left(  \wp^{\prime
}\left(  z-p\right)  -\wp^{\prime}\left(  z+p\right)  \right) \label{384}\\
&  +M\left(  \tau \right)  \left(  \wp \left(  z-p\right)  +\wp \left(
z+p\right)  \right) \nonumber \\
&  +N\left(  \tau \right)  \left(  \zeta \left(  z-p\right)  -\zeta \left(
z+p\right)  \right)  +C\left(  \tau \right)  ,\nonumber
\end{align}
}where the coefficients $L\left(  \tau \right)  $, $M\left(  \tau \right)  $,
$N\left(  \tau \right)  $ and $C\left(  \tau \right)  $ are given by%
\begin{equation}
L\left(  \tau \right)  =-\frac{1}{2}\left(  3\frac{dp}{d\tau}+\frac{i}{4\pi
}\left[  6A-3\left(  \zeta \left(  2p\right)  -2p\eta_{1}\right)  \right]
\right)  , \label{398}%
\end{equation}%
\begin{equation}
M\left(  \tau \right)  =-2A\frac{dp}{d\tau}+\frac{i}{4\pi}\left[
-4A^{2}+2A\left(  \zeta \left(  2p\right)  -2p\eta_{1}\right)  \right]  ,
\label{399}%
\end{equation}%
\begin{equation}
N\left(  \tau \right)  =-2\frac{dA}{d\tau}+\frac{i}{4\pi}\left[
\begin{array}
[c]{l}%
4A\left(  \wp \left(  2p\right)  +\eta_{1}\right)  -3\wp^{\prime}\left(
2p\right) \\
-2\sum_{k=0}^{3}n_{k}\left(  n_{k}+1\right)  \wp^{\prime}\left(
p+\frac{\omega_{k}}{2}\right)
\end{array}
\right]  , \label{401}%
\end{equation}
{\allowdisplaybreaks%
\begin{align}
C\left(  \tau \right)  =  &  4A\frac{dA}{d\tau}-2H_{1}\left(  \tau \right)
\frac{dp}{d\tau}\label{402}\\
&  +\frac{i}{4\pi}\left[
\begin{array}
[c]{l}%
-4A^{2}\left(  \wp \left(  2p\right)  +2\eta_{1}\right)  +3A\wp^{\prime}\left(
2p\right) \\
+2H_{1}\left(  \tau \right)  \left(  \zeta \left(  2p\right)  -2p\eta
_{1}\right)
\end{array}
\right]  .\nonumber
\end{align}
}Here $H_{1}\left(  \tau \right)  $ is given in (\ref{404}).
\end{lemma}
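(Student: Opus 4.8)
The plan is to establish three structural properties of $U(\cdot;\tau)$ --- that it is even, elliptic, and has poles only at $\pm p$ of order at most three --- and then to invoke the Liouville-type classification of elliptic functions to force the representation (\ref{384}); the coefficients $L,M,N,C$ are finally read off by matching the Laurent expansion of $U$ at $z=p$ against that of the right-hand side of (\ref{384}).

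First I would record the parities and quasi-periodicities of the building blocks. Since $I(\cdot;\tau)$ is even and elliptic and $\Omega_{12}(\cdot;\tau)$ in (\ref{c}) is odd, the four summands $\Omega_{12}^{\prime\prime\prime}$, $-4I\Omega_{12}^{\prime}$, $-2I^{\prime}\Omega_{12}$ and $2\frac{\partial}{\partial\tau}I$ are each even, so $U$ is even. Periodicity in $1$ is immediate. The only delicate point for ellipticity is the $\tau$-period: differentiating the identity $I(z+\tau;\tau)=I(z;\tau)$ totally in $\tau$ gives $\frac{\partial}{\partial\tau}I(z+\tau;\tau)=\frac{\partial}{\partial\tau}I(z;\tau)-I^{\prime}(z;\tau)$, so $2\frac{\partial}{\partial\tau}I$ is \emph{not} $\tau$-periodic. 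However, by (\ref{qqq}) one has $\Omega_{12}(z+\tau;\tau)=\Omega_{12}(z;\tau)-1$, and the resulting extra term $+2I^{\prime}(z;\tau)$ in $-2I^{\prime}\Omega_{12}$ exactly cancels the $-2I^{\prime}(z;\tau)$ coming from $2\frac{\partial}{\partial\tau}I$. Hence $U(z+\tau;\tau)=U(z;\tau)$ and $U$ is elliptic.

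The heart of the proof is the pole analysis. At $\pm p$ the function $\Omega_{12}$ has a simple pole and $I$ a double pole, so a priori $U$ could have a pole of order four; I would check with (\ref{390})--(\ref{391}) that the $(z\mp p)^{-4}$ coefficients of $\Omega_{12}^{\prime\prime\prime}$, $-4I\Omega_{12}^{\prime}$ and $-2I^{\prime}\Omega_{12}$ sum to zero, leaving order at most three, and that $2\frac{\partial}{\partial\tau}I$ (whose singular part at $p$ is governed by $\frac{dp}{d\tau}$ and $\frac{dA}{d\tau}$ through Lemma \ref{lem4-1}) contributes only up to order three. The main obstacle is to show that $U$ has \emph{no} pole at the half-periods $\frac{\omega_{k}}{2}$. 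Here $\Omega_{12}$ and $\Omega_{12}^{\prime\prime\prime}$ are holomorphic, while $I\sim n_{k}(n_{k}+1)u_{k}^{-2}$ forces potential poles in $-4I\Omega_{12}^{\prime}$, $-2I^{\prime}\Omega_{12}$ and $2\frac{\partial}{\partial\tau}I$. Using the expansions (\ref{392})--(\ref{393}) together with the crucial fact that $\frac{\partial}{\partial\tau}\wp(\cdot|\tau)$ is holomorphic at lattice points (its $u^{-2}$ coefficient being $\tau$-independent), the $u_{k}^{-2}$ and $u_{k}^{-1}$ principal parts cancel because $\Omega_{12}$ has no quadratic term at $\frac{\omega_{k}}{2}$, equivalently $\wp^{\prime}(\frac{\omega_{k}}{2}-p)=-\wp^{\prime}(\frac{\omega_{k}}{2}+p)$. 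The only genuinely subtle cancellation is the $u_{k}^{-3}$ term: for $k=0,1$ it vanishes because $\Omega_{12}(\frac{\omega_{k}}{2})=0$, whereas for $k=2,3$ one has $\Omega_{12}(\frac{\omega_{k}}{2})=-\tfrac12$ by oddness and (\ref{qqq}), and the missing cancellation is supplied precisely by the chain-rule contribution $\tfrac12\wp^{\prime}(z+\frac{\omega_{k}}{2}|\tau)$ in $\frac{\partial}{\partial\tau}I$ arising from the $\tau$-dependence of the half-period $\frac{\omega_{k}}{2}=\frac{\tau}{2}$ or $\frac{1+\tau}{2}$. Once this is verified, $U$ is an even elliptic function whose only poles in a period cell lie at $\pm p$, of order at most three.

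Finally, an even elliptic function with poles confined to $\pm p$ of order at most three must be of the form (\ref{384}): the odd third-order part forces the antisymmetric combination $\wp^{\prime}(z-p)-\wp^{\prime}(z+p)$, the second-order part the symmetric $\wp(z-p)+\wp(z+p)$, and the residues (which sum to zero by ellipticity) the combination $\zeta(z-p)-\zeta(z+p)$, up to an additive constant. To pin down $L,M,N,C$ I would expand $U$ near $z=p$ to order $u^{0}$ using Lemma \ref{lem-expand}(i) and compare with the expansion of (\ref{384}) at $p$; matching the coefficients of $u^{-3},u^{-2},u^{-1}$ and $u^{0}$ yields (\ref{398}), (\ref{399}), (\ref{401}) and (\ref{402}) respectively. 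This last step is a direct, if lengthy, computation with (\ref{e1})--(\ref{e2}) and the derivative formulas of Lemma \ref{lem4-1}, and presents no conceptual difficulty.
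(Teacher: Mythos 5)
Your proposal is correct and follows essentially the same route as the paper: establish that $U$ is even and elliptic (using (\ref{qqq}) to cancel the failure of $\frac{\partial}{\partial\tau}I$ to be $\tau$-periodic), check via the local expansions of Lemma \ref{lem-expand} that $U$ is holomorphic at the half-periods and has poles of order at most $3$ at $\pm p$, and then read off $L,M,N,C$ by expanding at $z=p$, exactly as in the paper's computations (\ref{407}) and (\ref{403}). One small bookkeeping caveat: $\frac{\partial}{\partial\tau}\wp(\cdot|\tau)$ is holomorphic only at the origin, not at all lattice points (it is not elliptic in $z$), so in your half-period analysis you should expand the term $n_k(n_k+1)\wp\left(z-\frac{\omega_k}{2}\right)$ about its own singularity, where the chain-rule contribution is $-\frac{1}{2}\frac{d\omega_k}{d\tau}\wp^{\prime}\left(z-\frac{\omega_k}{2}\right)$ and supplies the $u_k^{-3}$ cancellation with the correct sign.
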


\begin{proof}
Since $I\left(  \cdot;\tau \right)  $ is elliptic, we have $I\left(
z;\tau \right)  =I\left(  z+\tau;\tau \right)  $. Thus%
\begin{align}
\frac{\partial}{\partial \tau}I\left(  z;\tau \right)   &  =I^{\prime}\left(
z+\tau;\tau \right)  +\frac{\partial}{\partial \tau}I\left(  z+\tau;\tau \right)
\label{382}\\
&  =I^{\prime}\left(  z;\tau \right)  +\frac{\partial}{\partial \tau}I\left(
z+\tau;\tau \right)  .\nonumber
\end{align}
By using (\ref{382}) and the translation property (\ref{qqq}) of $\Omega
_{12}\left(  z;\tau \right)  $, we have%
\[
U\left(  z+\omega_{k};\tau \right)  =U\left(  z;\tau \right)  ,\text{ }k=1,2,
\]
that is, $U\left(  \cdot;\tau \right)  $ is elliptic. Moreover, since $I\left(
\cdot;\tau \right)  $ is even, we have%
\begin{equation}
\frac{\partial}{\partial \tau}I\left(  z;\tau \right)  =\frac{\partial}%
{\partial \tau}I\left(  -z;\tau \right)  . \label{383}%
\end{equation}
By using (\ref{383}) and $\Omega_{12}\left(  \cdot;\tau \right)  $ is odd, we
see that $U\left(  \cdot;\tau \right)  $ is even.\smallskip

Next, we claim that:
\begin{equation}
U\left(  \cdot;\tau \right)  \text{ is holomorphic at }\frac{\omega_{k}}%
{2},\text{ }k=0,1,2,3. \label{408}%
\end{equation}
Since the proof is similar, we only give the proof for $k=2$. In this case, by
(\ref{392}) and (\ref{393}) in Lemma \ref{lem-expand}, near $\frac{\tau}{2}$,
we have,
\begin{equation}
\Omega_{12}\left(  z;\tau \right)  =\frac{i}{4\pi}\left[  2\pi i+2\left(
\wp \left(  \frac{\tau}{2}+p\right)  +\eta_{1}\right)  u_{2}+O\left(  u_{2}%
^{3}\right)  \right]  , \label{385}%
\end{equation}
and%
\begin{equation}
I\left(  z;\tau \right)  =n_{2}\left(  n_{2}+1\right)  u_{2}^{-2}+\Lambda
_{2}\left(  \tau \right)  +O\left(  u_{2}^{2}\right)  , \label{386}%
\end{equation}
Then near $\frac{\tau}{2}$, we have%
\begin{equation}
\Omega_{12}^{\prime}\left(  z;\tau \right)  =\frac{i}{4\pi}\left[  2\left(
\wp \left(  \frac{\tau}{2}+p\right)  +\eta_{1}\right)  +O\left(  u_{2}%
^{2}\right)  \right]  , \label{387}%
\end{equation}%
\begin{equation}
I^{\prime}\left(  z;\tau \right)  =-2n_{2}\left(  n_{2}+1\right)  u_{2}%
^{-3}+O\left(  u_{2}\right)  , \label{388}%
\end{equation}
and%
\begin{equation}
\frac{\partial}{\partial \tau}I\left(  z;\tau \right)  =n_{2}\left(
n_{2}+1\right)  u_{2}^{-3}+\frac{\partial}{\partial \tau}\Lambda_{2}\left(
\tau \right)  +O\left(  u_{2}\right)  . \label{389}%
\end{equation}
By (\ref{385})-(\ref{389}) and $\Omega_{12}\left(  z;\tau \right)  $ is
holomorphic at $\frac{\omega_{k}}{2},$ $k=0,1,2,3$, we
have{\allowdisplaybreaks%
\begin{align}
&  U\left(  z;\tau \right) \label{407}\\
=  &  \Omega_{12}^{\prime \prime \prime}\left(  z;\tau \right)  -4I\left(
z;\tau \right)  \Omega_{12}^{\prime}\left(  z;\tau \right)  -2I^{\prime}\left(
z;\tau \right)  \Omega_{12}\left(  z;\tau \right)  +2\frac{\partial}%
{\partial \tau}I\left(  z;\tau \right) \nonumber \\
=  &  \Omega_{12}^{\prime \prime \prime}\left(  z;\tau \right)  -4\left[
n_{2}\left(  n_{2}+1\right)  u_{2}^{-2}+\Lambda_{2}\left(  \tau \right)
+O\left(  u_{2}^{2}\right)  \right] \nonumber \\
&  \times \left(  \frac{i}{4\pi}\right)  \left[  2\left(  \wp \left(  \frac
{\tau}{2}+p\right)  +\eta_{1}\right)  +O\left(  u_{2}^{2}\right)  \right]
\nonumber \\
&  -2\left[  -2n_{2}\left(  n_{2}+1\right)  u_{2}^{-3}+O\left(  u_{2}\right)
\right] \nonumber \\
&  \times \left(  \frac{i}{4\pi}\right)  \left[  2\pi i+2\left(  \wp \left(
\frac{\tau}{2}+p\right)  +\eta_{1}\right)  u_{1}+O\left(  u_{2}^{3}\right)
\right] \nonumber \\
&  +2\left[  n_{2}\left(  n_{2}+1\right)  u_{2}^{-3}+\frac{\partial}%
{\partial \tau}\Lambda_{2}\left(  \tau \right)  +O\left(  u_{2}^{2}\right)
\right]  .\nonumber
\end{align}
}From (\ref{407}), it is easy to see that the coefficients of $u_{2}^{-3}$,
$u_{2}^{-2}$, $u_{2}^{-1}$ are all vanishing which implies that $U\left(
z;\tau \right)  $ is holomorphic at $\frac{\tau}{2}$.\smallskip

Now we prove $U\left(  z;\tau \right)  $ can be written as (\ref{384}). To
compute the coefficients $L\left(  \tau \right)  $, $M\left(  \tau \right)  $,
$N\left(  \tau \right)  $ and $C\left(  \tau \right)  $, we only need to compute
near $p$. By (\ref{390}) and (\ref{391}), near $p$, we have%
\begin{equation}
\Omega_{12}^{\prime}\left(  z;\tau \right)  =\frac{-i}{4\pi}\left(
\begin{array}
[c]{l}%
-u^{-2}-\left(  \wp \left(  2p\right)  +2\eta_{1}\right)  -\wp^{\prime}\left(
2p\right)  u\\
-\frac{1}{2}\left(  \frac{g_{2}}{10}+\wp^{\prime \prime}\left(  2p\right)
\right)  u^{2}+O\left(  u^{3}\right)
\end{array}
\right)  , \label{394}%
\end{equation}%
\begin{equation}
\Omega_{12}^{\prime \prime \prime}\left(  z;\tau \right)  =\frac{-i}{4\pi}\left(
-6u^{-4}-\left(  \frac{g_{2}}{10}+\wp^{\prime \prime}\left(  2p\right)
\right)  +O\left(  u\right)  \right)  , \label{395}%
\end{equation}%
\begin{equation}
I^{\prime}\left(  z;\tau \right)  =-\frac{3}{2}u^{-3}+Au^{-2}+H_{1}\left(
\tau \right)  +2H_{2}\left(  \tau \right)  u+O\left(  u^{2}\right)  ,
\label{396}%
\end{equation}%
\begin{align}
\frac{\partial}{\partial \tau}I\left(  z;\tau \right)   &  =\frac{3}{2}\frac
{dp}{d\tau}u^{-3}-A\frac{dp}{d\tau}u^{-2}-\frac{dA}{d\tau}u^{-1}\label{397}\\
&  +\left(  2A\frac{dA}{d\tau}-H_{1}\left(  \tau \right)  \frac{dp}{d\tau
}\right)  +O\left(  u\right)  .\nonumber
\end{align}
By (\ref{390}), (\ref{391}) and (\ref{394})-(\ref{397}), near $p$, after
computation, we have {\allowdisplaybreaks%
\begin{align}
&  U\left(  z;\tau \right) \label{403}\\
=  &  \left(  3\frac{dp}{d\tau}+\frac{i}{4\pi}\left[  6A-3\left(  \zeta \left(
2p\right)  -2p\eta_{1}\right)  \right]  \right)  u^{-3}\nonumber \\
&  +\left(  -2A\frac{dp}{d\tau}+\frac{i}{4\pi}\left[  -4A^{2}+2A\left(
\zeta \left(  2p\right)  -2p\eta_{1}\right)  \right]  \right)  u^{-2}%
\nonumber \\
&  +\left(  -2\frac{dA}{d\tau}+\frac{i}{4\pi}\left[
\begin{array}
[c]{l}%
4A\left(  \wp \left(  2p\right)  +\eta_{1}\right)  -3\wp^{\prime}\left(
2p\right) \\
-2\sum_{k=0}^{3}n_{k}\left(  n_{k}+1\right)  \wp^{\prime}\left(
p+\frac{\omega_{k}}{2}\right)
\end{array}
\right]  \right)  u^{-1}\nonumber \\
&  +\left(
\begin{array}
[c]{l}%
4A\frac{dA}{d\tau}-2H_{1}\left(  \tau \right)  \frac{dp}{d\tau}\\
+\frac{i}{4\pi}\left[
\begin{array}
[c]{l}%
-4A^{2}\left(  \wp \left(  2p\right)  +2\eta_{1}\right)  +3A\wp^{\prime}\left(
2p\right) \\
+2H_{1}\left(  \tau \right)  \left(  \zeta \left(  2p\right)  -2p\eta
_{1}\right)
\end{array}
\right]
\end{array}
\right)  +O\left(  u\right)  .\nonumber
\end{align}
}Obviously, (\ref{403}) implies that $U\left(  z;\tau \right)  $ has pole at
$p$ with order at most $3$. Since $U\left(  z;\tau \right)  $ is an even
elliptic function, $U\left(  z;\tau \right)  $ also has pole at $-p$ with order
at most $3$. From here and (\ref{408}), we conclude that $U\left(
z;\tau \right)  $ has poles only at $\pm p$ with order at most $3$. Moreover,
from (\ref{403}), it is easy to see that the coefficients $L\left(
\tau \right)  $, $M\left(  \tau \right)  $, $N\left(  \tau \right)  $ and
$C\left(  \tau \right)  $ are given by (\ref{398})-(\ref{402}).
\end{proof}

\begin{proof}
[Proof of Theorem \ref{thm4-2}]By Theorem \ref{thm M1} and Lemma \ref{lem-U},
the generalized Lam\'{e} equation (\ref{152}) with $\left(  p,A\right)
=\left(  p\left(  \tau \right)  ,A\left(  \tau \right)  \right)  $ is monodromy
preserving as $\tau$ deforms if and only if
\[
U\left(  z;\tau \right)  =0,
\]
if and only if
\begin{equation}
L\left(  \tau \right)  =M\left(  \tau \right)  =N\left(  \tau \right)  =C\left(
\tau \right)  =0. \label{403-1}%
\end{equation}
By (\ref{398})-(\ref{402}), a straightforward computation shows that
(\ref{403-1}) is equivalent to that $\left(  p,A\right)  =\left(  p\left(
\tau \right)  ,A\left(  \tau \right)  \right)  $ satisfies the Hamiltonian
system (\ref{142}) (see (\ref{144}) below).
\end{proof}

\subsection{Hamiltonian system and Painlev\'{e} VI}

Next, we will study the Hamiltonian structure for the elliptic form
(\ref{124}) with $\alpha_{i}$ defined by (\ref{125}). Our second main theorem
is the following:

\begin{theorem}
\label{thm4-1}The elliptic form (\ref{124}) with $\alpha_{k}=\frac{1}%
{2}\left(  n_{k}+\frac{1}{2}\right)  ^{2}$, $k=0,1,2,3$ is equivalent to the
Hamiltonian system defined by (\ref{142}) and (\ref{143}).
\end{theorem}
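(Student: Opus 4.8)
The plan is to reduce the equivalence to a single second-order scalar ODE for $p(\tau)$. Writing out $\partial K/\partial A$ and $-\partial K/\partial p$ from (\ref{143}), the Hamiltonian system (\ref{142}) is the explicit first-order system
\begin{equation}
\frac{dp}{d\tau}=\frac{-i}{4\pi}\bigl(2A-\zeta(2p)+2p\eta_{1}\bigr),\qquad
\frac{dA}{d\tau}=\frac{i}{4\pi}\Bigl((2\wp(2p)+2\eta_{1})A-\tfrac{3}{2}\wp'(2p)-\sum_{k=0}^{3}n_{k}(n_{k}+1)\wp'(p+\tfrac{\omega_{k}}{2})\Bigr).
\end{equation}
The first equation is solvable for $A$: it is equivalent to $A=2\pi i\,\dot p+\tfrac12(\zeta(2p)-2p\eta_{1})$, where $\dot p=dp/d\tau$. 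Thus specifying a solution $(p,A)$ of the system is the same as specifying $p$ together with this formula for $A$, and the whole content of the system is that the second equation hold once $A$ is eliminated. So I would prove both implications at once by showing that, under $A=2\pi i\,\dot p+\tfrac12(\zeta(2p)-2p\eta_{1})$, the second equation is equivalent to the elliptic form (\ref{124}).

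First I differentiate the first equation in $\tau$, remembering that $\zeta(2p|\tau)$ and $\eta_{1}(\tau)$ depend on $\tau$ both explicitly and through $p(\tau)$; here I use $\partial_z\zeta=-\wp$ together with Lemma \ref{lem4-1}(ii) for $\partial_\tau\zeta(2p|\tau)$ and Lemma \ref{lem4-1}(v) for $\dot\eta_{1}$. Substituting the resulting expression for $\dot A$ from the second equation, together with $A=2\pi i\,\dot p+\tfrac12(\zeta(2p)-2p\eta_{1})$, yields a relation $\ddot p=(\ldots)$. The main computational task is to verify that almost everything on the right cancels: the first-order terms in $\dot p$ disappear (as they must for a second-order equation), and so do all the terms containing $\wp(2p)$, $\zeta(2p)$, $\eta_{1}$, $\eta_{1}^{2}$ and $g_{2}$ — the $g_{2}$ contributions coming from $\partial_\tau\zeta$ and from $\dot\eta_{1}$ cancel against each other, and writing $W=\zeta(2p)-2p\eta_{1}$ organizes the remaining $\eta_{1}$-terms to zero. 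What survives is purely a combination of derivatives of Weierstrass functions:
\begin{equation}
\ddot p=\frac{-1}{4\pi^{2}}\wp'(2p)-\frac{1}{8\pi^{2}}\sum_{k=0}^{3}n_{k}(n_{k}+1)\wp'\bigl(p+\tfrac{\omega_{k}}{2}\bigr).
\end{equation}

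The conceptual key is then the half-period identity
\begin{equation}
\sum_{k=0}^{3}\wp'\bigl(z+\tfrac{\omega_{k}}{2}\bigr)=8\,\wp'(2z),
\end{equation}
which I would prove by comparing the two sides: both are odd elliptic functions with the same period lattice whose only poles lie on $E_{\tau}[2]+\Lambda_{\tau}$, with matching principal part $-2/(z-\tfrac{\omega_{j}}{2})^{3}$ at each half-period (using $\wp'(\omega_{k}/2)=0$ for $k\neq 0$), so their difference is an odd holomorphic elliptic function and hence vanishes. Using this to rewrite $\wp'(2p)=\tfrac18\sum_{k}\wp'(p+\tfrac{\omega_{k}}{2})$, the surviving right-hand side becomes $\frac{-1}{4\pi^{2}}\sum_{k}\bigl(\tfrac18+\tfrac12 n_{k}(n_{k}+1)\bigr)\wp'(p+\tfrac{\omega_{k}}{2})$, and the identity $\tfrac18+\tfrac12 n_{k}(n_{k}+1)=\tfrac12(n_{k}+\tfrac12)^{2}=\alpha_{k}$ turns this into exactly the elliptic form (\ref{124}). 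Since every manipulation above is an equivalence, this simultaneously proves the forward direction (a solution of the system gives a solution of the elliptic form) and the converse (given a solution $p$ of (\ref{124}), defining $A:=2\pi i\,\dot p+\tfrac12(\zeta(2p)-2p\eta_{1})$ makes the first Hamiltonian equation hold by construction and, by reversing the computation, the second as well). I expect the bookkeeping of the cancellations in the second paragraph to be the only real obstacle; everything else is structural.
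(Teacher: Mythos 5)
Your proposal is correct and follows essentially the same route as the paper: the paper likewise eliminates $A$ via the first Hamiltonian equation, differentiates it using Lemma \ref{lem4-1}(ii),(v) to reach $\frac{d^{2}p}{d\tau^{2}}=\frac{-1}{4\pi^{2}}\bigl(\wp'(2p)+\frac{1}{2}\sum_{k}n_{k}(n_{k}+1)\wp'(p+\frac{\omega_{k}}{2})\bigr)$, and then invokes the same half-period identity and the identity $\frac{1}{8}+\frac{1}{2}n_{k}(n_{k}+1)=\frac{1}{2}(n_{k}+\frac{1}{2})^{2}$, treating the two directions as the same computation run forwards and backwards. The only substantive difference is that you state and prove the identity $\sum_{k=0}^{3}\wp'(z+\frac{\omega_{k}}{2})=8\wp'(2z)$ explicitly (correctly, by matching principal parts of odd elliptic functions), whereas the paper uses it tacitly in passing from the first to the second line of (\ref{151}).
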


\begin{proof}
Suppose $\left(  p\left(  \tau \right)  ,A\left(  \tau \right)  \right)  $
satisfies the Hamiltonian system (\ref{142}), i.e.,%
\begin{equation}
\left \{
\begin{array}
[c]{l}%
\frac{dp\left(  \tau \right)  }{d\tau}=\frac{\partial K\left(  p,A,\tau \right)
}{\partial A}=\frac{-i}{4\pi}\left(  2A-\zeta \left(  2p|\tau \right)
+2p\eta_{1}\left(  \tau \right)  \right) \\
\frac{dA\left(  \tau \right)  }{d\tau}=-\frac{\partial K\left(  p,A,\tau
\right)  }{\partial p}=\frac{i}{4\pi}\left(
\begin{array}
[c]{l}%
\left(  2\wp \left(  2p|\tau \right)  +2\eta_{1}\left(  \tau \right)  \right)
A-\frac{3}{2}\wp^{\prime}\left(  2p|\tau \right) \\
-\sum_{k=0}^{3}n_{k}\left(  n_{k}+1\right)  \wp^{\prime}\left(  p+\frac
{\omega_{k}}{2}|\tau \right)
\end{array}
\right)
\end{array}
\right.  . \label{144}%
\end{equation}
Then we compute the second derivative $\frac{d^{2}p\left(  \tau \right)
}{d\tau^{2}}$ of $p\left(  \tau \right)  $ as follows:{\allowdisplaybreaks%
\begin{align}
\frac{d^{2}p\left(  \tau \right)  }{d\tau^{2}}=  &  \frac{-i}{4\pi}\left[
2\frac{dA\left(  \tau \right)  }{d\tau}+2\wp \left(  2p|\tau \right)
\frac{dp\left(  \tau \right)  }{d\tau}-\frac{\partial}{\partial \tau}%
\zeta \left(  2p|\tau \right)  \right. \label{144-1}\\
&  \left.  +2\eta_{1}\left(  \tau \right)  \frac{dp\left(  \tau \right)  }%
{d\tau}+2p\left(  \tau \right)  \frac{d\eta_{1}\left(  \tau \right)  }{d\tau
}\right]  .\nonumber
\end{align}
}By Lemma \ref{lem4-1}, we have%
\begin{equation}
-\frac{\partial}{\partial \tau}\zeta \left(  2p|\tau \right)  =\frac{-i}{4\pi
}\left[
\begin{array}
[c]{l}%
\wp^{\prime}\left(  2p|\tau \right)  +2\left(  \zeta \left(  2p|\tau \right)
-2p\eta_{1}\left(  \tau \right)  \right)  \wp \left(  2p|\tau \right) \\
+2\eta_{1}\zeta \left(  2p|\tau \right)  -\frac{1}{3}pg_{2}\left(  \tau \right)
\end{array}
\right]  , \label{148}%
\end{equation}%
\begin{equation}
2p\left(  \tau \right)  \frac{d\eta_{1}\left(  \tau \right)  }{d\tau}=\frac
{i}{2\pi}p\left(  \tau \right)  \left[  2\eta_{1}^{2}-\frac{1}{6}g_{2}\left(
\tau \right)  \right]  . \label{150}%
\end{equation}
Substituting (\ref{144}), (\ref{148}) and (\ref{150}) into (\ref{144-1}), we
have{\allowdisplaybreaks%
\begin{align}
\frac{d^{2}p\left(  \tau \right)  }{d\tau^{2}}  &  =\frac{-1}{4\pi^{2}}\left(
\wp^{\prime}\left(  2p|\tau \right)  +\frac{1}{2}\sum_{k=0}^{3}n_{k}\left(
n_{k}+1\right)  \wp^{\prime}\left(  p+\frac{\omega_{k}}{2}|\tau \right)
\right) \nonumber \\
&  =\frac{-1}{4\pi^{2}}\left[  \frac{1}{8}\sum_{k=0}^{3}\wp^{\prime}\left(
p+\frac{\omega_{k}}{2}|\tau \right)  +\frac{1}{2}\sum_{k=0}^{3}n_{k}\left(
n_{k}+1\right)  \wp^{\prime}\left(  p+\frac{\omega_{k}}{2}|\tau \right)
\right] \label{151}\\
&  =\frac{-1}{4\pi^{2}}\sum_{k=0}^{3}\frac{1}{2}\left(  n_{k}+\frac{1}%
{2}\right)  ^{2}\wp^{\prime}\left(  p+\frac{\omega_{k}}{2}|\tau \right)
,\nonumber
\end{align}
}implying that $p\left(  \tau \right)  $ is a solution of the elliptic form
(\ref{124}) with $\alpha_{k}=\frac{1}{2}\left(  n_{k}+\frac{1}{2}\right)
^{2}$, $k=0,1,2,3$.

Conversely, suppose $p\left(  \tau \right)  $ is a solution of the elliptic
form (\ref{124}) with $\alpha_{k}=\frac{1}{2}\left(  n_{k}+\frac{1}{2}\right)
^{2}$, $k=0,1,2,3$. We define $A\left(  \tau \right)  $ by the first equation
of (\ref{144}), i.e.,%
\begin{equation}
A\left(  \tau \right)  \doteqdot2\pi i\frac{dp\left(  \tau \right)  }{d\tau
}+\frac{1}{2}\left(  \zeta \left(  2p|\tau \right)  -2p\eta_{1}\left(
\tau \right)  \right)  . \label{145}%
\end{equation}
Then{\allowdisplaybreaks%
\begin{align*}
\frac{dA\left(  \tau \right)  }{d\tau}=  &  2\pi i\frac{d^{2}p\left(
\tau \right)  }{d\tau^{2}}+\frac{1}{2}\left(  -2\wp \left(  2p|\tau \right)
\frac{dp\left(  \tau \right)  }{d\tau}+\frac{\partial}{\partial \tau}%
\zeta \left(  2p|\tau \right)  \right) \\
&  -\left(  \eta_{1}\left(  \tau \right)  \frac{dp\left(  \tau \right)  }{d\tau
}+p\left(  \tau \right)  \frac{d\eta_{1}\left(  \tau \right)  }{d\tau}\right)  ,
\end{align*}
}and by using (\ref{144}), (\ref{148}) and (\ref{150}), we
have{\allowdisplaybreaks%
\begin{align*}
\frac{dA\left(  \tau \right)  }{d\tau}=  &  \frac{i}{4\pi}\left[  2\left(
\wp \left(  2p|\tau \right)  +\eta_{1}\left(  \tau \right)  \right)  A-\frac
{3}{2}\wp^{\prime}\left(  2p|\tau \right)  \right. \\
&  \left.  -\sum_{k=0}^{3}n_{k}\left(  n_{k}+1\right)  \wp^{\prime}\left(
p+\frac{\omega_{k}}{2}|\tau \right)  \right]  .
\end{align*}
}Thus, $\left(  p\left(  \tau \right)  ,A\left(  \tau \right)  \right)  $ is a
solution to the Hamiltonian system (\ref{144}).
\end{proof}

Moreover, from (\ref{144}), we could obtain the integral formula for $A\left(
\tau \right)  $ in Theorem \ref{theorem1-5}.

\begin{proof}
[Proof of Theorem \ref{theorem1-5}]Let us consider $F\left(  \tau \right)
=A+\frac{1}{2}\left(  \zeta \left(  2p\right)  -2\zeta \left(  p\right)
\right)  $ and compute $\frac{d}{d\tau}F\left(  \tau \right)  $. By (\ref{144})
and Lemma \ref{lem4-1}, we have{\allowdisplaybreaks%
\begin{align*}
&  \frac{d}{d\tau}F\left(  \tau \right) \\
=  &  \frac{dA}{d\tau}+\frac{1}{2}\frac{d}{d\tau}\left(  \zeta \left(
2p\right)  -2\zeta \left(  p\right)  \right) \\
=  &  \frac{i}{4\pi}\left(  2\left(  \wp \left(  2p\right)  +\eta_{1}\left(
\tau \right)  \right)  A-\frac{3}{2}\wp^{\prime}\left(  2p\right)  -\sum
_{k=0}^{3}n_{k}\left(  n_{k}+1\right)  \wp^{\prime}(p(\tau)+\frac{\omega_{k}%
}{2})\right) \\
&  -\left(  \wp \left(  2p\right)  -\wp \left(  p\right)  \right)  \frac
{dp}{d\tau}+\frac{1}{2}\left(  \frac{\partial}{\partial \tau}\zeta \left(
2p\right)  -2\frac{\partial}{\partial \tau}\zeta \left(  p\right)  \right) \\
=  &  \frac{i}{2\pi}\left(  2\wp \left(  2p\right)  -\wp \left(  p\right)
+\eta_{1}\right)  F\left(  \tau \right)  -\frac{i}{4\pi}\sum_{k=0}^{3}%
n_{k}\left(  n_{k}+1\right)  \wp^{\prime}(p(\tau)+\frac{\omega_{k}}{2}).
\end{align*}
}Thus,
\begin{align}
&  F\left(  \tau \right) \label{381}\\
=  &  \exp \left \{  \frac{i}{2\pi}\int^{\tau}\left(  2\wp \left(  2p(\hat{\tau
})|\hat{\tau}\right)  -\wp \left(  p(\hat{\tau})|\hat{\tau}\right)  +\eta
_{1}(\hat{\tau})\right)  d\hat{\tau}\right \}  \cdot J\left(  \tau \right)
,\nonumber
\end{align}
where
\[
J\left(  \tau \right)  =\int^{\tau}\frac{-\frac{i}{4\pi}\left(  \sum_{k=0}%
^{3}n_{k}(n_{k}+1)\wp^{\prime}(p(\hat{\tau})+\frac{\omega_{k}}{2}|\hat{\tau
})\right)  }{\exp \left \{  \frac{i}{2\pi}\int^{\hat{\tau}}\left(  2\wp
(2p(\tau^{\prime})|\tau^{\prime})-\wp(p(\tau^{\prime})|\tau^{\prime})+\eta
_{1}\left(  \tau^{\prime}\right)  \right)  d\tau^{\prime}\right \}  }d\hat
{\tau}+c_{1}%
\]
for some constant $c_{1}\in \mathbb{C}$. By Lemma \ref{lem4-1}, we have
\begin{equation}
\frac{3i}{4\pi}\int^{\tau}\eta_{1}(\hat{\tau})d\hat{\tau}=\ln \theta
_{1}^{\prime}(\tau). \label{theta}%
\end{equation}
Then (\ref{514}) follows from (\ref{381}) and (\ref{theta}).
\end{proof}

\section{Collapse of two singular points}

In this section, we study the phenomena of collapsing two singular points $\pm
p(\tau)$ to $0$ in the generalized Lam\'{e} equation (\ref{89-0}) when
$p(\tau)$ is a solution of the elliptic form (\ref{124}). As an application of
Theorems \ref{thm4-2} and \ref{thm4-1}, it turns out that the classical
Lam\'{e} equation%
\begin{equation}
y^{\prime \prime}(z)=\left(  n(n+1)\wp(z)+B\right)  y(z)\text{ \ in \ }E_{\tau}
\label{5033}%
\end{equation}
appears as a limiting equation if $n_{k}=0$ for $k=1,2,3$ (see Theorem
\ref{thm-II-9} below). First we recall the following classical result.\medskip \ 

\noindent \textbf{Theorem A.\cite[Proposition 1.4.1]{GP} }\textit{Assume
}$\theta_{4}=n_{0}+\frac{1}{2}\not =0$. \textit{Then for any }$t_{0}%
\in \mathbb{CP}^{1}\backslash \{0,1,\infty \}$\textit{, there exist two }%
$1$-\textit{parameter families of solutions }$\lambda(t)$\textit{ of
Painlev\'{e} VI (\ref{46}) such that}%
\begin{equation}
\lambda(t)=\frac{\beta}{t-t_{0}}+h+O(t-t_{0})\text{ as }t\rightarrow t_{0},
\label{II-132}%
\end{equation}
\textit{where }$h\in \mathbb{C}$ \textit{can be taken arbitrary and}%
\begin{equation}
\beta=\beta(\theta,t_{0})\in \left \{  \pm \tfrac{t_{0}(t_{0}-1)}{\theta_{4}%
}\right \}  . \label{II-133}%
\end{equation}
\textit{Furthermore, these two }$1$-\textit{parameter families of solutions
give all solutions of Painlev\'{e} VI (\ref{46}) which has a pole at }$t_{0}%
$\textit{.\medskip}

In this paper, we always identify the solutions $p(\tau)$ and $-p(\tau)$ of
the elliptic form (\ref{124}). As a consequence of Theorem A and the
transformation (\ref{tr}), we have the following result.

\begin{lemma}
\label{lemII-4} \textit{Assume }$n_{0}+\frac{1}{2}\not =0$. \textit{Then for
any }$\tau_{0}\in \mathbb{H}$\textit{, by the transformation (\ref{tr})
solutions }$\lambda(t)$\textit{ in Theorem A give two }$1$-\textit{parameter
families of solutions }$p(\tau)$\textit{ of the elliptic form (\ref{124}) such
that}%
\begin{equation}
p(\tau)=c_{0}(\tau-\tau_{0})^{\frac{1}{2}}(1+\tilde{h}(\tau-\tau_{0}%
)+O(\tau-\tau_{0})^{2})\text{ as }\tau \rightarrow \tau_{0}, \label{515-5}%
\end{equation}
where $\tilde{h}\in \mathbb{C}$ \textit{can be taken arbitrary,}%
\begin{equation}
c_{0}^{2}=\left \{
\begin{array}
[c]{l}%
i\frac{n_{0}+\frac{1}{2}}{\pi}\text{ \  \ if \  \ }\beta=-\frac{t_{0}(t_{0}%
-1)}{\theta_{4}}\\
-i\frac{n_{0}+\frac{1}{2}}{\pi}\text{ \  \ if \  \ }\beta=\frac{t_{0}(t_{0}%
-1)}{\theta_{4}}%
\end{array}
\right.  , \label{515-3}%
\end{equation}
and $t_{0}=t(\tau_{0})$. \textit{Furthermore, these two }$1$-\textit{parameter
families of solutions give all solutions }$p(\tau)$\textit{ of the elliptic
form (\ref{124}) such that }$p(\tau_{0})=0$\textit{.}
\end{lemma}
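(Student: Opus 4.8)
The plan is to transport the pole expansion (\ref{II-132}) of $\lambda(t)$ through the transformation (\ref{tr}) and read off the local behaviour of $p(\tau)$ near $\tau_{0}$. Inverting (\ref{tr}) gives
\[
\wp(p(\tau)|\tau)=\Psi(\tau)\doteqdot(e_{2}(\tau)-e_{1}(\tau))\lambda(t(\tau))+e_{1}(\tau),
\]
where the right-hand side is single-valued and meromorphic near $\tau_{0}$ because $t(\tau)$, $e_{1}(\tau)$, $e_{2}(\tau)$ are holomorphic and $e_{2}(\tau_{0})\neq e_{1}(\tau_{0})$. Since $\wp$ has poles only on $\Lambda_{\tau}$, the function $\lambda$ has a pole at $t_{0}$ if and only if $\wp(p(\tau)|\tau)\to\infty$, i.e. if and only if $p(\tau_{0})\equiv0$ in $E_{\tau_{0}}$ (recall we identify $p$ with $-p$). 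Since $t(\tau)$ is, up to normalisation, the modular lambda function, it is a local biholomorphism with $t'(\tau_{0})\neq0$, so $t-t_{0}=t'(\tau_{0})(\tau-\tau_{0})+\cdots$; combined with $\lambda=\beta(t-t_{0})^{-1}+O(1)$ this shows $\Psi$ has a simple pole at $\tau_{0}$ with residue $\kappa\doteqdot(e_{2}(\tau_{0})-e_{1}(\tau_{0}))\beta/t'(\tau_{0})$.

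Next I would pin down $c_{0}^{2}$. Writing $\wp(p|\tau)=p^{-2}+O(p^{2})$ and $\Psi(\tau)=\kappa(\tau-\tau_{0})^{-1}+\psi(\tau)$ with $\psi$ holomorphic, the leading balance forces $p(\tau)^{2}=\kappa^{-1}(\tau-\tau_{0})(1+o(1))$, hence $c_{0}^{2}=\kappa^{-1}=t'(\tau_{0})/[(e_{2}-e_{1})\beta]$. To evaluate $t'$ I would differentiate $t=(e_{3}-e_{1})/(e_{2}-e_{1})$ using Lemma \ref{lem4-1}(iii) together with $\wp'(\tfrac{\omega_{i}}{2})=0$ (the moving half-periods contribute nothing, since $\tfrac{d}{d\tau}\wp(\tfrac{\tau}{2}|\tau)=\tfrac{1}{2}\wp'(\tfrac{\tau}{2})+\partial_{\tau}\wp=\partial_{\tau}\wp$). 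This gives $\tfrac{d}{d\tau}(e_{i}-e_{j})=\tfrac{i}{\pi}(e_{i}-e_{j})(e_{k}+\eta_{1})$ for $\{i,j,k\}=\{1,2,3\}$, whence
\[
t'(\tau)=\frac{i}{\pi}\,\frac{(e_{3}-e_{1})(e_{2}-e_{3})}{e_{2}-e_{1}},
\]
which also confirms $t'(\tau_{0})\neq0$. Substituting $\beta=\pm t_{0}(t_{0}-1)/\theta_{4}$ with $\theta_{4}=n_{0}+\tfrac12$ and $t_{0}(t_{0}-1)=(e_{3}-e_{1})(e_{3}-e_{2})/(e_{2}-e_{1})^{2}$, the factors $(e_{3}-e_{1})(e_{3}-e_{2})$ cancel and leave $c_{0}^{2}=\mp i\theta_{4}/\pi=\mp i(n_{0}+\tfrac12)/\pi$, which is exactly (\ref{515-3}) (the choice $\beta=-t_{0}(t_{0}-1)/\theta_{4}$ giving $c_{0}^{2}=i(n_{0}+\tfrac12)/\pi$).

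To obtain the full expansion (\ref{515-5}) and the arbitrariness of $\tilde{h}$, I would show $p^{2}$ is holomorphic in $\tau-\tau_{0}$. Setting $v\doteqdot1/p^{2}$ and $\phi(w,\tau)\doteqdot\wp(p|\tau)-w^{-1}=\tfrac{g_{2}(\tau)}{20}w+\cdots$ (holomorphic at $(w,\tau)=(0,\tau_{0})$ with $\phi(0,\tau)=0$), the equation $\wp(p|\tau)=\Psi(\tau)$ becomes $v+\phi(1/v,\tau)=\Psi(\tau)$. Substituting $v=\kappa(\tau-\tau_{0})^{-1}+\rho$ cancels the singular parts and yields $\rho+\phi(\tfrac{\tau-\tau_{0}}{\kappa+\rho(\tau-\tau_{0})},\tau)=\psi(\tau)$, whose linearisation in $\rho$ at $\tau=\tau_{0}$ has derivative $1$; the analytic implicit function theorem then gives $\rho$ holomorphic in $\tau-\tau_{0}$ with $\rho(\tau_{0})=\psi(\tau_{0})$. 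Consequently $p^{2}=1/v=c_{0}^{2}(\tau-\tau_{0})(1-c_{0}^{2}\psi(\tau_{0})(\tau-\tau_{0})+\cdots)$ is holomorphic, and taking the square root gives (\ref{515-5}) with $\tilde{h}=-\tfrac12c_{0}^{2}\psi(\tau_{0})$. Since $\psi(\tau_{0})$ contains the free constant $h$ of (\ref{II-132}) affinely with nonzero coefficient $e_{2}(\tau_{0})-e_{1}(\tau_{0})$, the parameter $\tilde{h}$ ranges over all of $\mathbb{C}$; the two sign choices for $\beta$ in (\ref{II-133}) give the two $1$-parameter families.

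Finally, the ``furthermore'' claim follows because (\ref{tr}) sets up a bijection between solutions of Painlev\'e VI (\ref{46}) and solutions of the elliptic form (\ref{124}), and by the first step $p(\tau_{0})=0$ is equivalent to $\lambda$ having a pole at $t_{0}=t(\tau_{0})$; thus the two families exhaust all $p(\tau)$ with $p(\tau_{0})=0$ precisely because Theorem A exhausts all solutions of (\ref{46}) with a pole at $t_{0}$. The step I expect to be the main obstacle is the explicit evaluation of $c_{0}^{2}$ in the second paragraph: one must differentiate $t(\tau)$ correctly through the $\tau$-dependence of the half-periods $\tfrac{\omega_{2}}{2},\tfrac{\omega_{3}}{2}$ and then verify that the elliptic factors cancel so that $c_{0}^{2}$ collapses to the clean value in (\ref{515-3}); by contrast the implicit-function argument of the third paragraph is routine once the substitution $v=1/p^{2}$ is in place.
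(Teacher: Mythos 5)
Your proof is correct, and it follows the same overall strategy as the paper's: everything reduces to the identity $t'(\tau_{0})=-i\,\tfrac{t_{0}(t_{0}-1)}{\pi}\bigl(e_{2}(\tau_{0})-e_{1}(\tau_{0})\bigr)$ (equation (\ref{II-134}) in the paper, which is the same as your $t'=\tfrac{i}{\pi}(e_{3}-e_{1})(e_{2}-e_{3})/(e_{2}-e_{1})$ after writing $t(t-1)=(e_{3}-e_{1})(e_{3}-e_{2})/(e_{2}-e_{1})^{2}$), after which $c_{0}^{2}=t'(\tau_{0})/[(e_{2}-e_{1})\beta]$ collapses to $\mp i\theta_{4}/\pi$ exactly as you compute. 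The one genuine difference is how this derivative is obtained: the paper writes $t=\vartheta_{4}^{4}/\vartheta_{3}^{4}$ and differentiates using the theta-constant identities $e_{3}-e_{2}=\pi^{2}\vartheta_{2}^{4}$, $e_{1}-e_{3}=\pi^{2}\vartheta_{4}^{4}$, $e_{1}-e_{2}=\pi^{2}\vartheta_{3}^{4}$ together with the logarithmic derivatives of $\vartheta_{3},\vartheta_{4}$, whereas you differentiate the $e_{j}$ directly via Lemma \ref{lem4-1}(iii) at the half-periods (correctly noting that the moving half-periods contribute nothing because $\wp'(\omega_{i}/2)=0$) and use $e_{1}+e_{2}+e_{3}=0$ to get $\tfrac{d}{d\tau}(e_{i}-e_{j})=\tfrac{i}{\pi}(e_{i}-e_{j})(e_{k}+\eta_{1})$. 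Both routes are valid; yours stays entirely within the Weierstrass formulae already listed in Lemma \ref{lem4-1}, while the paper's is a shorter appeal to standard theta identities. You also spell out, via the substitution $v=1/p^{2}$ and the implicit function theorem, why $p^{2}$ is holomorphic in $\tau-\tau_{0}$ and why $\tilde{h}$ sweeps out all of $\mathbb{C}$ as $h$ does; the paper treats this as immediate ("It suffices to prove (\ref{515-3})"), so your write-up is more detailed on that point but not in conflict with it.
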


\begin{proof}
It suffices to prove (\ref{515-3}), which follows readily from
\begin{equation}
t^{\prime}(\tau_{0})=-i\frac{t_{0}(t_{0}-1)}{\pi}\left(  e_{2}(\tau_{0}%
)-e_{1}(\tau_{0})\right)  . \label{II-134}%
\end{equation}
Remark that $t_{0}\not \in \{0,1\}$, so (\ref{II-134}) implies $t^{\prime
}(\tau_{0})\not =0$.

The formula (\ref{II-134}) is known in the literature. Here we give a proof
for the reader's convenience. Recalling theta functions $\vartheta_{2}%
(\tau),\vartheta_{3}(\tau)$ and $\vartheta_{4}(\tau)$, it is well-known that
(cf. see \cite{YB} for a reference)%
\begin{equation}
e_{3}(\tau)-e_{2}(\tau)=\pi^{2}\vartheta_{2}(\tau)^{4},\text{ \ }e_{1}%
(\tau)-e_{3}(\tau)=\pi^{2}\vartheta_{4}(\tau)^{4}, \label{II-134-2}%
\end{equation}%
\[
e_{1}(\tau)-e_{2}(\tau)=\pi^{2}\vartheta_{3}(\tau)^{4},
\]%
\[
\frac{d}{d\tau}\ln \vartheta_{4}(\tau)=\frac{i}{12\pi}\left[  3\eta_{1}%
(\tau)-\pi^{2}(2\vartheta_{2}(\tau)^{4}+\vartheta_{4}(\tau)^{4})\right]  ,
\]%
\[
\frac{d}{d\tau}\ln \vartheta_{3}(\tau)=\frac{i}{12\pi}\left[  3\eta_{1}%
(\tau)+\pi^{2}(\vartheta_{2}(\tau)^{4}-\vartheta_{4}(\tau)^{4})\right]  .
\]
Therefore, $t=\vartheta_{4}^{4}/\vartheta_{3}^{4}$ and then%
\begin{align}
t^{\prime}(\tau)  &  =4t\left(  \frac{d}{d\tau}\ln \vartheta_{4}-\frac{d}%
{d\tau}\ln \vartheta_{3}\right)  =-i\pi t\cdot \vartheta_{2}^{4}\label{II-134-1}%
\\
&  =-i\pi \frac{\vartheta_{2}^{4}\vartheta_{4}^{4}}{\vartheta_{3}^{4}}%
=-i\frac{t(t-1)}{\pi}\left(  e_{2}-e_{1}\right)  .\nonumber
\end{align}
This completes the proof.
\end{proof}

\begin{theorem}
\label{thm-II-9}Assume that $n_{k}\not \in \mathbb{Z}+\frac{1}{2},$
$k\in \{0,1,2,3\}$, and (\ref{101}) hold. Let $(p(\tau),A(\tau))$ be a solution
of the Hamiltonian system (\ref{aa}) such that $p(\tau_{0})=0$ for some
$\tau_{0}\in \mathbb{H}$. Then%
\begin{equation}
p(\tau)=c_{0}(\tau-\tau_{0})^{\frac{1}{2}}(1+\tilde{h}(\tau-\tau_{0}%
)+O(\tau-\tau_{0})^{2})\text{ as }\tau \rightarrow \tau_{0}, \label{515-4}%
\end{equation}
for some $\tilde{h}\in \mathbb{C}$ and the generalized Lam\'{e} equation
(\ref{89-0}) converges to%
\begin{equation}
y^{\prime \prime}=\left[  \sum_{j=1}^{3}n_{j}\left(  n_{j}+1\right)  \wp \left(
z+\frac{\omega_{j}}{2}\right)  +m(m+1)\wp(z)+B_{0}\right]  y\text{ in }%
E_{\tau_{0}}, \label{503-2}%
\end{equation}
where $c_{0}$ is seen in (\ref{515-3}),%
\begin{equation}
m=\left \{
\begin{array}
[c]{l}%
n_{0}+1\text{ \ if \ }c_{0}^{2}=i\frac{n_{0}+\frac{1}{2}}{\pi}\text{\ i.e.,\ }%
\beta=-\frac{t_{0}(t_{0}-1)}{n_{0}+\frac{1}{2}},\\
n_{0}-1\text{ \ if \ }c_{0}^{2}=-i\frac{n_{0}+\frac{1}{2}}{\pi}%
\text{\ i.e.,\ }\beta=\frac{t_{0}(t_{0}-1)}{n_{0}+\frac{1}{2}},
\end{array}
\right.  \label{503-3}%
\end{equation}%
\begin{equation}
B_{0}=2\pi ic_{0}^{2}\left(  4\pi i\tilde{h}-\eta_{1}(\tau_{0})\right)
-\sum_{j=1}^{3}n_{j}(n_{j}+1)e_{j}(\tau_{0}). \label{503-4}%
\end{equation}

\end{theorem}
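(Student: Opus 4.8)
The plan is to deduce the asymptotic expansion (\ref{515-4}) directly from the equivalence already established, and then to read off the limiting equation by tracking the Laurent behaviour of the coefficients $A(\tau)$ and $B(\tau)$ as $p(\tau)\to 0$.

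First I would invoke Theorem \ref{thm4-1}: since $(p(\tau),A(\tau))$ solves the Hamiltonian system, $p(\tau)$ is a solution of the elliptic form (\ref{124}) with $\alpha_k=\tfrac12(n_k+\tfrac12)^2$. Because $n_0\notin\mathbb{Z}+\tfrac12$ forces $n_0+\tfrac12\neq0$, Lemma \ref{lemII-4} applies and yields at once the expansion (\ref{515-4}) together with the value of $c_0^2$ in (\ref{515-3}). This settles the first assertion; it remains to identify the limiting potential.

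Next I would compute the two leading terms of $A(\tau)$. Starting from the first Hamiltonian equation in the form $A=2\pi i\,\tfrac{dp}{d\tau}+\tfrac12(\zeta(2p)-2p\eta_1)$ (cf.\ (\ref{145})), I substitute (\ref{515-4}), differentiate to obtain $\tfrac{dp}{d\tau}=\tfrac{c_0^2}{2p}+2\tilde h\,p+O(p^3)$, and use $\zeta(2p)=\tfrac1{2p}+O(p^3)$. This produces
\[
A=\frac{a_{-1}}{p}+a_{1}\,p+O(p^3),\qquad a_{-1}=\pi i c_0^2+\tfrac14,\quad a_{1}=4\pi i\tilde h-\eta_1(\tau_0).
\]
With $z$ fixed and bounded away from the singular set I then let $\tau\to\tau_0$ in the potential $I(z;\tau)$. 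Using $\wp(z\pm p)=\wp(z)\pm p\wp'(z)+O(p^2)$ and $\zeta(z+p)-\zeta(z-p)=-2p\wp(z)+O(p^3)$ together with $pA\to a_{-1}$, every $\wp''$-type correction dies and the coefficient of $\wp(z)$ tends to $n_0(n_0+1)+\tfrac32-2a_{-1}$. A short computation, distinguishing the two values of $c_0^2$ in (\ref{515-3}), shows this equals $(n_0+1)(n_0+2)$ or $n_0(n_0-1)$, i.e.\ $m(m+1)$ with $m$ as in (\ref{503-3}); the remaining terms converge to $\sum_{j=1}^3 n_j(n_j+1)\wp(z+\tfrac{\omega_j}{2})+\lim_{\tau\to\tau_0}B$.

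Finally I would show $B\to B_0$ by inserting the expansion of $A$ into the apparentness relation (\ref{101}) and expanding $\wp(2p)$, $\zeta(2p)$ and $\wp(p+\tfrac{\omega_k}2)$. The key point — and the only genuine obstacle — is that the \emph{a priori} divergent $p^{-2}$ contributions coming from $A^2$, $\zeta(2p)A$, $\tfrac34\wp(2p)$ and $n_0(n_0+1)\wp(p)$ must cancel exactly. This cancellation is equivalent to the identity $a_{-1}^2-\tfrac12 a_{-1}-\tfrac{3}{16}=n_0(n_0+1)$, whose two roots $a_{-1}=-n_0-\tfrac14$ and $a_{-1}=n_0+\tfrac34$ are precisely the ones produced by the two choices of $c_0^2$ in (\ref{515-3}); hence the cancellation is automatic and $B$ stays bounded. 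Reading off the surviving $p^0$ term gives $B_0=(2a_{-1}-\tfrac12)a_1-\sum_{j=1}^3 n_j(n_j+1)e_j(\tau_0)$, and since $2a_{-1}-\tfrac12=2\pi i c_0^2$ this is exactly (\ref{503-4}). Combining these limits, $I(z;\tau)$ converges locally uniformly off the singular set to the potential in (\ref{503-2}), which proves the asserted convergence of the generalized Lam\'e equation to the classical Lam\'e equation.
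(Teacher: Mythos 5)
Your proposal is correct and follows essentially the same route as the paper's own proof: Lemma \ref{lemII-4} for the expansion of $p(\tau)$, the first Hamiltonian equation to extract $A=\frac{c}{p}+ep+O(p^3)$ with $c=\pi i c_0^2+\frac14$ and $e=4\pi i\tilde h-\eta_1(\tau_0)$, the quadratic identity $c^2-\frac{c}{2}-\frac{3}{16}=n_0(n_0+1)$ to cancel the divergent part of $B$, and the limit $A(\zeta(z+p)-\zeta(z-p))\to-2c\wp(z)$ to identify $m(m+1)$. All the computed constants match the paper's.
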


\begin{proof}
Clearly (\ref{515-4}) follows from Lemma \ref{lemII-4}, by which we have
(write $p=p(\tau)$)%
\[
(\tau-\tau_{0})^{\frac{1}{2}}=\frac{1}{c_{0}}p\left(  1-\frac{1}{c_{0}^{2}%
}\tilde{h}p^{2}+O(p^{4})\right)  \text{ as }\tau \rightarrow \tau_{0}.
\]
Consequently,{\allowdisplaybreaks}%
\begin{align*}
p^{\prime}(\tau)  &  =\frac{1}{2}c_{0}(\tau-\tau_{0})^{-\frac{1}{2}}%
[1+3\tilde{h}(\tau-\tau_{0})+O((\tau-\tau_{0})^{2})]\\
&  =\frac{c_{0}^{2}}{2p}\left[  1+\frac{4}{c_{0}^{2}}\tilde{h}p^{2}%
+O(p^{4})\right]  \text{\ as\ }\tau \rightarrow \tau_{0}.
\end{align*}
This, together with the first equation of the Hamiltonian system
(\ref{142-0})-(\ref{143-0}), gives{\allowdisplaybreaks%
\begin{align}
A(\tau)  &  =\frac{1}{2}\left[  4\pi ip^{\prime}(\tau)+\zeta(2p(\tau
))-2p(\tau)\eta_{1}(\tau)\right] \label{II-123}\\
&  =\frac{\pi ic_{0}^{2}}{p}\left[  1+\frac{4}{c_{0}^{2}}\tilde{h}%
p^{2}+O(p^{4})\right]  +\frac{1}{4p}-\eta_{1}(\tau_{0})p+O(p^{3})\nonumber \\
&  =\frac{c}{p}+ep+O(p^{3})\text{ \ as \ }\tau \rightarrow \tau_{0},\nonumber
\end{align}
}where $e=4\pi i\tilde{h}-\eta_{1}(\tau_{0})$ and%
\begin{equation}
c=\pi ic_{0}^{2}+\frac{1}{4}=\left \{
\begin{array}
[c]{c}%
-n_{0}-\frac{1}{4}\text{ \  \ if \  \ }c_{0}^{2}=i\frac{n_{0}+\frac{1}{2}}{\pi
},\\
n_{0}+\frac{3}{4}\text{ \  \ if \  \ }c_{0}^{2}=-i\frac{n_{0}+\frac{1}{2}}{\pi}.
\end{array}
\right.  \label{II-124}%
\end{equation}
Clearly $c$ satisfies
\[
c^{2}-\frac{c}{2}-\frac{3}{16}-n_{0}(n_{0}+1)=0.
\]
Consequently, we have{\allowdisplaybreaks%
\begin{align*}
B(\tau)=  &  A^{2}-\zeta \left(  2p\right)  A-\frac{3}{4}\wp \left(  2p\right)
-\sum_{j=0}^{3}n_{j}\left(  n_{j}+1\right)  \wp \left(  p+\frac{\omega_{j}}%
{2}\right) \\
=  &  \left(  \frac{c}{p}+ep+O(p^{3})\right)  ^{2}-\left(  \frac{1}%
{2p}+O(p^{3})\right)  \left(  \frac{c}{p}+ep+O(p^{3})\right) \\
&  -\frac{\left(  \frac{3}{16}+n_{0}(n_{0}+1)\right)  }{p^{2}}-\sum_{j=1}%
^{3}n_{j}\left(  n_{j}+1\right)  e_{j}(\tau_{0})+O(p^{2})\\
=  &  \frac{4c-1}{2}e-\sum_{j=1}^{3}n_{j}\left(  n_{j}+1\right)  e_{j}%
(\tau_{0})+O(p^{2})=B_{0}+O(p^{2})
\end{align*}
}as $p=p(\tau)\rightarrow0$ since $\tau \rightarrow \tau_{0}$, where $B_{0}$ is
given by (\ref{503-4}). Furthermore, (\ref{II-123}) implies%
\[
A(\zeta(z+p)-\zeta(z-p))=A(-2p\wp(z)+O(p^{2}))\rightarrow-2c\wp(z)
\]
uniformly for $z$ bounded away from the lattice points. Therefore, the
potential of the generalized Lam\'{e} equation (\ref{89-0}) converges
to{\allowdisplaybreaks%
\[
\sum_{j=1}^{3}n_{j}\left(  n_{j}+1\right)  \wp \left(  z+\frac{\omega_{j}}%
{2}\right)  +\left[  n_{0}(n_{0}+1)+\frac{3}{2}-2c\right]  \wp(z)+B_{0}%
\]
}uniformly for $z$ bounded away from the lattice points as $\tau
\rightarrow \tau_{0}$. Using (\ref{II-124}) we easily obtain (\ref{503-2}%
)-(\ref{503-3}).
\end{proof}

\section{Correspondence between generalized Lam\'{e} equation and Fuchsian
equation}

In this section, we want to establish a one to one correspondence between the
generalized Lam\'{e} equation (\ref{89-0}) and a type of Fuchsian equations on
$\mathbb{CP}^{1}$. After the correspondence, naturally we ask the question:
\textit{Is the isomonodromic deformation for the generalized Lam\'{e} equation
in }$E_{\tau}$\textit{ equivalent to the isomonodromic deformation for the
corresponding Fuchsian equation on }$\mathbb{CP}^{1}$\textit{?} Notice that we
establish the correspondence by using the transformation $x=\frac{\wp \left(
z\right)  -e_{1}}{e_{2}-e_{1}}$ (see (\ref{123}) below) which is a double
cover from $E_{\tau}$ onto $\mathbb{CP}^{1}$. Hence, it is clear that the
isomonodromic deformation for the Fuchsian equation could imply the the
isomonodromic deformation for the generalized Lam\'{e} equation. However, the
converse assertion is not easy at all, because the lifting of a closed loop in
$\mathbb{CP}^{1}$ via $x=\frac{\wp \left(  z\right)  -e_{1}}{e_{2}-e_{1}}$ is
not necessarily a closed loop in $E_{\tau}$. As an application of Theorems
\ref{thm4-2} and \ref{thm4-1}, we could give a positive answer.

First we review the Fuchs-Okamoto theory. Consider a second order Fuchsian
equation defined on $\mathbb{CP}^{1}$ as follows:%
\begin{equation}
y^{\prime \prime}+p_{1}\left(  x\right)  y^{\prime}+p_{2}\left(  x\right)  y=0,
\label{90}%
\end{equation}
which has five regular singular points at $\left \{  t,0,1,\infty
,\lambda \right \}  $ and $p_{j}\left(  x\right)  =p_{j}(x;t$, $\lambda$, $\mu
)$, $j=1,2$, are rational functions in $x$ such that the Riemann scheme of
(\ref{90}) is%
\begin{equation}
\left(
\begin{array}
[c]{ccccc}%
t & 0 & 1 & \infty & \lambda \\
0 & 0 & 0 & \hat{\alpha} & 0\\
\theta_{t} & \theta_{0} & \theta_{1} & \hat{\alpha}+\theta_{\infty} & 2
\end{array}
\right)  , \label{91}%
\end{equation}
where $\hat{\alpha}$ is determined by the Fuchsian relation, that is,
\[
\hat{\alpha}=-\tfrac{1}{2}\left(  \theta_{t}+\theta_{0}+\theta_{1}%
+\theta_{\infty}-1\right)  .
\]
Throughout this section we always assume that
\begin{equation}
\text{ }\lambda \not \in \left \{  0,1,t\right \}  \text{ and }\lambda \text{
\textit{is an apparent singular point}.} \label{137}%
\end{equation}
Since one exponent at any one of $0,1,\lambda,t$ is $0$ (see (\ref{91})),
$p_{2}(x)$ has only simple poles at $0,1,\lambda,t$. The residue of
$p_{1}\left(  x\right)  $ at $x=\lambda$ is $-1$ because another exponent at
$x=\lambda$ is $2$. Define $\mu$ and $K$ as follows:%
\begin{equation}
\mu \doteqdot \underset{x=\lambda}{\text{ Res }}p_{2}\left(  x\right)  ,\text{
}K\doteqdot-\underset{x=t}{\text{Res}}\text{ }p_{2}\left(  x\right)  .
\label{92}%
\end{equation}
By (\ref{91})-(\ref{92}), we have
\begin{equation}
p_{1}\left(  x\right)  =\frac{1-\theta_{t}}{x-t}+\frac{1-\theta_{0}}{x}%
+\frac{1-\theta_{1}}{x-1}-\frac{1}{x-\lambda}, \label{96}%
\end{equation}%
\begin{equation}
p_{2}\left(  x\right)  =\frac{\hat{\kappa}}{x\left(  x-1\right)  }%
-\frac{t\left(  t-1\right)  K}{x\left(  x-1\right)  \left(  x-t\right)
}+\frac{\lambda \left(  \lambda-1\right)  \mu}{x\left(  x-1\right)  \left(
x-\lambda \right)  }, \label{97}%
\end{equation}
\medskip where
\begin{equation}
\hat{\kappa}=\hat{\alpha}\left(  \hat{\alpha}+\theta_{\infty}\right)
=\frac{1}{4}\left \{  \left(  \theta_{0}+\theta_{1}+\theta_{t}-1\right)
^{2}-\theta_{\infty}^{2}\right \}  . \label{a}%
\end{equation}
By the condition (\ref{137}), i.e., $\lambda$ is apparent, $K$ can be
expressed explicitly by%

\begin{equation}
K\left(  \lambda,\mu,t\right)  =\frac{1}{t\left(  t-1\right)  }\left \{
\begin{array}
[c]{l}%
\lambda \left(  \lambda-1\right)  \left(  \lambda-t\right)  \mu^{2}+\hat
{\kappa}\left(  \lambda-t\right) \\
-\left[
\begin{array}
[c]{l}%
\theta_{0}\left(  \lambda-1\right)  \left(  \lambda-t\right)  +\theta
_{1}\lambda \left(  \lambda-t\right) \\
+\left(  \theta_{t}-1\right)  \lambda \left(  \lambda-1\right)
\end{array}
\right]  \mu
\end{array}
\right \}  . \label{98}%
\end{equation}
For all details about (\ref{96})-(\ref{98}), we refer the reader to \cite{GP}.

Now let $t$ be the deformation parameter, and assume that (\ref{90}) with
$\left(  \lambda \left(  t\right)  ,\mu \left(  t\right)  \right)  $ preserves
the monodromy representation. In \cite{Fuchs, Okamoto2}, it was discovered
that under the non-resonant condition, $\left(  \lambda \left(  t\right)
,\mu \left(  t\right)  \right)  $ must satisfy the following Hamiltonian
system:%
\begin{equation}
\frac{d\lambda \left(  t\right)  }{dt}=\frac{\partial K}{\partial \mu},\text{
\ }\frac{d\mu \left(  t\right)  }{dt}=-\frac{\partial K}{\partial \lambda}.
\label{aa}%
\end{equation}
Indeed, the following theorem was proved in \cite{Fuchs, Okamoto2}.\medskip

\noindent \textbf{Theorem B.\cite{Fuchs, Okamoto2} }\emph{Suppose that }%
$\theta_{t},\theta_{0},\theta_{1},\theta_{\infty}\notin \mathbb{Z}$ \emph{(i.e.
the non-resonant condition)} \emph{and }$\lambda$\emph{\ is an apparent
singular point. Then the second order ODE (\ref{90}) preserves the monodromy
as }$t$\emph{\ deforms if and only if }$\left(  \lambda \left(  t\right)
,\mu \left(  t\right)  \right)  $\emph{\ satisfies the Hamiltonian system
(\ref{aa}).\medskip}

It is well-known in the literature that a solution of Painlev\'{e} VI
(\ref{46}) can be obtained from the Hamiltonian system (\ref{aa}) with the
Hamiltonian $K\left(  \lambda,\mu,t\right)  $ defined in (\ref{98}). Let
$\left(  \lambda \left(  t\right)  ,\mu \left(  t\right)  \right)  $ be a
solution to the Hamiltonian system (\ref{aa}). Then $\lambda \left(  t\right)
$ satisfies the Painlev\'{e} VI (\ref{46}) with parameters%
\begin{equation}
\left(  \alpha,\beta,\gamma,\delta \right)  =\left(  \tfrac{1}{2}\theta
_{\infty}^{2},\, -\tfrac{1}{2}\theta_{0}^{2},\, \tfrac{1}{2}\theta_{1}^{2},\,
\tfrac{1}{2}\left(  1-\theta_{t}^{2}\right)  \right)  . \label{67}%
\end{equation}

Conversely, if $\lambda \left(  t\right)  $ is a solution to Painlev\'{e} VI
(\ref{46}), then we define $\mu \left(  t\right)  $ by the first equation of
(\ref{aa}), where $(\theta_{0},\theta_{1},\theta_{t},\theta_{\infty})$ and
$\hat{\kappa}$ are given by (\ref{67}) and (\ref{a}), respectively.
Consequently, $\left(  \lambda \left(  t\right)  ,\mu \left(  t\right)  \right)
$ is a solution to (\ref{aa}). The above facts can be proved directly. For
details, we refer the reader to \cite{GR, GP}. Together with this fact and
Theorem B, we have\medskip

\noindent \textbf{Theorem C.} \emph{Assume the same hypotheses of Theorem B.
Then the second order ODE (\ref{90}) preserves the monodromy as }$t$\emph{
deforms if and only if }$\lambda(t)$\emph{ satisfies Painlev\'{e} VI
(\ref{46}) with parameters (\ref{67}).\medskip}

Now let us consider the following generalized Lam\'{e} equation in $E_{\tau}$:%
\begin{equation}
y^{\prime \prime}=\left[
\begin{array}
[c]{l}%
\sum_{i=0}^{3}n_{i}\left(  n_{i}+1\right)  \wp \left(  z+\frac{\omega_{i}}%
{2}\right)  +\frac{3}{4}\left(  \wp \left(  z+p\right)  +\wp \left(  z-p\right)
\right) \\
+A\left(  \zeta \left(  z+p\right)  -\zeta \left(  z-p\right)  \right)  +B
\end{array}
\right]  y, \label{89}%
\end{equation}
and suppose that $p$ is an apparent singularity of (\ref{89}). Then we shall
prove that the generalized Lam\'{e} equation (\ref{89}) is 1-1 correspondence
to the 2nd order Fuchsian equation (\ref{90}) with $\lambda$ being an apparent
singularity. To describe the 1-1 correspondence between (\ref{90}) and
(\ref{89}), we set%
\begin{equation}
x=\frac{\wp \left(  z\right)  -e_{1}}{e_{2}-e_{1}}\text{ and\ }\mathfrak{p}%
\left(  x\right)  =4x\left(  x-1\right)  \left(  x-t\right)  . \label{123}%
\end{equation}
Then we have the following theorem:

\begin{theorem}
\label{thm1}Given a generalized Lam\'{e} equation (\ref{89}) defined in
$E_{\tau}$. Suppose $p\not \in E_{\tau}\left[  2\right]  $ is an apparent
singularity of (\ref{89}). Then by using $x=\frac{\wp \left(  z\right)  -e_{1}%
}{e_{2}-e_{1}}$, there is a corresponding 2nd order Fuchsian equation
(\ref{90}) satisfying (\ref{91}) and (\ref{137}) whose coefficients
$p_{1}\left(  x\right)  $ and $p_{2}\left(  x\right)  $ are expressed by
(\ref{96})-(\ref{98}), where%
\begin{equation}
t=\frac{e_{3}-e_{1}}{e_{2}-e_{1}},\text{ \ }\lambda=\frac{\wp \left(  p\right)
-e_{1}}{e_{2}-e_{1}}, \label{138}%
\end{equation}%
\begin{equation}
\left(  \theta_{0},\theta_{1},\theta_{t},\theta_{\infty}\right)  =\left(
n_{1}+\tfrac{1}{2},n_{2}+\tfrac{1}{2},n_{3}+\tfrac{1}{2},n_{0}+\tfrac{1}%
{2}\right)  , \label{103}%
\end{equation}%
\begin{equation}
\hat{\alpha}=-\frac{1}{2}\left(  1+n_{0}+n_{1}+n_{2}+n_{3}\right)  ,
\label{139}%
\end{equation}%
\begin{equation}
\mu=\frac{2n_{3}-1}{4\left(  \lambda-t\right)  }+\frac{2n_{2}-1}{4\left(
\lambda-1\right)  }+\frac{2n_{1}-1}{4\lambda}+\frac{3}{8}\frac{\mathfrak{p}%
^{\prime}\left(  \lambda \right)  }{\mathfrak{p}\left(  \lambda \right)  }%
+\frac{A\wp^{\prime}\left(  p\right)  }{b^{2}\mathfrak{p}\left(
\lambda \right)  }, \label{104}%
\end{equation}%
\begin{align}
K  &  =-\frac{2n_{2}n_{3}-n_{2}-n_{3}}{4\left(  t-1\right)  }-\frac
{2n_{1}n_{3}-n_{1}-n_{3}}{4t}-\frac{2n_{3}-1}{4\left(  t-\lambda \right)
}\label{105}\\
&  +\frac{1}{4t\left(  t-1\right)  }\left[
\begin{array}
[c]{l}%
\frac{3}{2}\frac{\lambda \left(  \lambda-1\right)  }{\left(  \lambda-t\right)
}-\frac{3}{2}\frac{\wp \left(  p\right)  +e_{3}}{e_{2}-e_{1}}+\frac
{A\wp^{\prime}\left(  p\right)  }{\left(  \lambda-t\right)  (e_{2}-e_{1})^{2}%
}\\
+\frac{n_{0}\left(  n_{0}+1\right)  e_{3}}{e_{2}-e_{1}}+\frac{n_{1}\left(
n_{1}+1\right)  e_{2}}{e_{2}-e_{1}}+\frac{n_{2}\left(  n_{2}+1\right)  e_{1}%
}{e_{2}-e_{1}}\\
-\frac{2n_{3}\left(  n_{3}+1\right)  e_{3}}{e_{2}-e_{1}}+\frac{2}{e_{2}-e_{1}%
}A\wp \left(  p\right)  +\frac{B}{e_{2}-e_{1}}%
\end{array}
\right]  .\nonumber
\end{align}
Conversely, given a 2nd order Fuchsian equation (\ref{90}) satisfying
(\ref{91}) and (\ref{137}), there is a corresponding generalized Lam\'{e}
equation (\ref{89}) defined in $E_{\tau}$ where $\tau,\pm p,n_{i}$ are defined
by (\ref{138})-(\ref{103}), the constant $A$ is defined by solving
(\ref{104}), and the constant $B$ is defined by (\ref{101}). In particular,
$p$ is an apparent singularity of (\ref{89}).
\end{theorem}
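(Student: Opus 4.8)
The plan is to realize the correspondence by an explicit change of variables followed by a gauge transformation, and then to pin down every parameter by matching the two Riemann schemes and computing residues. Write $b=e_2-e_1$. From $(\wp')^2=4\prod_i(\wp-e_i)$ and (\ref{123}) one checks $\wp-e_1=bx$, $\wp-e_2=b(x-1)$, $\wp-e_3=b(x-t)$, hence $\left(\tfrac{dx}{dz}\right)^2=b\,\mathfrak{p}(x)$ and $\tfrac{d^2x}{dz^2}=\tfrac{b}{2}\mathfrak{p}'(x)$. The four half-periods $0,\tfrac{\omega_1}{2},\tfrac{\omega_2}{2},\tfrac{\omega_3}{2}$ are precisely the branch points of $z\mapsto x$ and map to $\infty,0,1,t$, while $\pm p\not\in E_\tau[2]$ map biholomorphically to the single point $\lambda$. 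Substituting $y(z)=w(x)$ into $y''=Iy$ gives
\[
w''+\frac{\mathfrak{p}'(x)}{2\mathfrak{p}(x)}\,w'-\frac{I}{b\,\mathfrak{p}(x)}\,w=0 ,
\]
and since $I$ is even while $x_z$ is odd, both coefficients are single-valued rational functions of $x$. I then apply the gauge $Y=\phi^{-1}w$ with $\phi=x^{-n_1/2}(x-1)^{-n_2/2}(x-t)^{-n_3/2}(x-\lambda)^{-1/2}$, chosen so that $p_1=\tfrac{\mathfrak{p}'}{2\mathfrak{p}}+2\phi'/\phi$ collapses exactly to (\ref{96}) under the assignment (\ref{103}).

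Next I verify the Riemann scheme (\ref{91}) by exponent bookkeeping. At a branch point a $z$-exponent $\rho$ becomes $\rho/2$, so the Lam\'e exponents $-n_i,n_i+1$ at $\tfrac{\omega_i}{2}$ produce exponent differences $n_i+\tfrac12$, matching (\ref{103}), and at $\infty$ they yield $\hat\alpha$ and $\hat\alpha+\theta_\infty$ with $\hat\alpha$ as in (\ref{139}) after the shift by $\phi$. At $\lambda$, which is \emph{not} a branch point, the exponents $-\tfrac12,\tfrac32$ are preserved and the factor $(x-\lambda)^{-1/2}$ in $\phi$ shifts them to $0,2$. Because $z\mapsto x$ is a local biholomorphism near $p$ and $\phi$ only shifts the exponent by the half-integer $\tfrac12$ without creating logarithms, $\lambda$ is apparent if and only if $\pm p$ are; this is exactly where the standing hypothesis that $\pm p$ is apparent, i.e. (\ref{101}), enters.

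It then remains to compute residues. The Fuchsian relation $\hat\kappa=\hat\alpha(\hat\alpha+\theta_\infty)$ is (\ref{a}). For $\mu=\operatorname{Res}_{x=\lambda}p_2$, the simple-pole part $-A/(z-p)$ of $I$ (the $\zeta$-terms, cf. (\ref{391})) contributes $\tfrac{A\wp'(p)}{b^2\mathfrak{p}(\lambda)}$ via $\wp'(p)^2=b^3\mathfrak{p}(\lambda)$, the double pole $\tfrac34(z-p)^{-2}$ together with the gauge gives the $\tfrac38\mathfrak{p}'(\lambda)/\mathfrak{p}(\lambda)$ term, and the $\phi$-factors supply the three terms $\tfrac{2n_j-1}{4(\cdot)}$, yielding (\ref{104}). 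The main obstacle is $K=-\operatorname{Res}_{x=t}p_2$: near the branch point $t$ one uses $x-t=bt(t-1)u_3^2+\cdots$ with $u_3=z-\tfrac{\omega_3}{2}$, together with the expansion (\ref{393})--(\ref{406}) of $I$ (whose constant term is $\Lambda_3(\tau)$), checks that the double poles of $-I/(b\mathfrak{p})$ and of $\tfrac{\phi''}{\phi}+\tfrac{\mathfrak{p}'}{2\mathfrak{p}}\tfrac{\phi'}{\phi}$ cancel, and then rewrites $\Lambda_3$ through the half-period addition formulas in terms of $e_1,e_2,e_3,\wp(p),A,B$ to reach (\ref{105}). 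This collection is where sign and normalization errors are most likely, so I would organize it by first isolating the double-pole cancellation, then the $\Lambda_3/b$ contribution, then the cross terms produced by $\phi$.

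Finally, for the converse I invert the construction: given (\ref{90}) with (\ref{91}) and (\ref{137}), define $\tau$ by inverting $t=\tfrac{e_3-e_1}{e_2-e_1}$, the $n_i$ by (\ref{103}), the points $\pm p$ by $\wp(p)=e_1+\lambda b$ (so that $\lambda\not\in\{0,1,t\}$ forces $p\not\in E_\tau[2]$), the constant $A$ by solving the linear equation (\ref{104}), and $B$ by (\ref{101}); Lemma \ref{lem-apparent} then guarantees that $\pm p$ is apparent. Running the forward construction on this Lam\'e equation reproduces the prescribed $p_1,p_2$, the only nontrivial consistency being that the Lam\'e-side value of $K$ from (\ref{105}) equals the Fuchsian-side $K$ forced by apparentness (\ref{98}); this holds because the no-logarithm condition at $\lambda$ transfers to the no-logarithm condition at $\pm p$ under the biholomorphic change of variables. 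Together with the identification $p\sim-p$, this establishes the claimed one-to-one correspondence and the apparentness of $p$.
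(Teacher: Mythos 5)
Your proposal follows essentially the same route as the paper: the substitution $x=\frac{\wp(z)-e_1}{e_2-e_1}$, the gauge by $x^{-n_1/2}(x-1)^{-n_2/2}(x-t)^{-n_3/2}(x-\lambda)^{-1/2}$, matching of Riemann schemes to fix $(\theta_0,\theta_1,\theta_t,\theta_\infty)$ and $\hat\alpha$, residue extraction at $\lambda$ and $t$ for $\mu$ and $K$, and the inverse construction with Lemma \ref{lem-apparent} supplying apparentness of $\pm p$; your key identities ($\wp'(p)^2=b^3\mathfrak{p}(\lambda)$, $x-t=bt(t-1)u_3^2+\cdots$) are the correct ingredients. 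The only difference is that you leave the explicit evaluation of (\ref{104})--(\ref{105}) as an organized computation rather than carrying it out (the paper does it via the addition formula (\ref{362}) entirely in the $x$-variable), but the method is sound and would reproduce the stated formulas.
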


\begin{remark}
For the second part of Theorem \ref{thm1}, the condition $\lambda
\not \in \{0,1,t\}$ is equivalent to $p\not \in E_{\tau}[2]$, which implies
$\wp^{\prime}(p)\not =0$. Thus, $A$ is well-defined via (\ref{104}). The proof
of Theorem \ref{thm1} will be given after Corollary \ref{corcor}.
\end{remark}

Let $\rho:\pi_{1}(E_{\tau}\backslash \left(  E_{\tau}\left[  2\right]
\cup \left \{  \pm p\right \}  \right)  ,q_{0})\rightarrow SL\left(
2,\mathbb{C}\right)  $, $\tilde{\rho}:\pi_{1}(\mathbb{CP}^{1}\backslash
\left \{  0,1,t,\infty \right \}  $, $\lambda_{0})\rightarrow GL\left(
2,\mathbb{C}\right)  $ where $\lambda_{0}=x\left(  q_{0}\right)  ,$ be the
monodromy representations of the generalized Lam\'{e} equation (\ref{89}) and
the corresponding Fuchsian equation (\ref{90}) respectively. Let $Y\left(
z\right)  =\left(  y_{1}\left(  z\right)  ,y_{2}\left(  z\right)  \right)  $
be a fixed fundamental solution of (\ref{89}). We denote $\mathcal{N}$ and
$\mathcal{M}$ to be the monodromy groups of (\ref{89}) and (\ref{90}) with
respect to $Y\left(  z\right)  $ and $\hat{Y}\left(  x\right)  $ respectively.
Here $\hat{Y}\left(  x\right)  =\left(  \hat{y}_{1}\left(  x\right)  ,\hat
{y}_{2}\left(  x\right)  \right)  $ with $\hat{y}_{j}\left(  x\right)  $
defined by%
\begin{align*}
{y_{j}\left(  z\right)  } &  =\psi(x){}\hat{y}_{j}\left(  x\right)  \\
&  \doteqdot \left(  x-\lambda \right)  ^{-\frac{1}{2}}x^{-\frac{n_{1}}{2}%
}(x-1)^{-\frac{n_{2}}{2}}(x-t)^{-\frac{n_{3}}{2}}\hat{y}_{j}\left(  x\right)
\text{, }j=1,2,
\end{align*}
and $x=\frac{\wp \left(  z\right)  -e_{1}}{e_{2}-e_{1}}$ is a fundamental
solution of equation (\ref{90}); see the proof of Theorem \ref{thm1} below.
Let $\gamma_{1}\in \pi_{1}\left(  E_{\tau}\backslash \left(  E_{\tau}\left[
2\right]  \cup \left \{  \pm p\right \}  \right)  ,q_{0}\right)  $ be a loop
which encircles the singularity $\frac{\omega_{1}}{2}$ once. Then $x\left(
\gamma_{1}\right)  \in \pi_{1}\left(  \mathbb{CP}^{1}\backslash \left \{
0,1,t,\infty \right \}  ,\lambda_{0}\right)  $. Since $x=\frac{\wp \left(
z\right)  -e_{1}}{e_{2}-e_{1}}$ is a double cover, the loop $x\left(
\gamma_{1}\right)  $ encircles the singularity $0$ twice. Thus, $x\left(
\gamma_{1}\right)  =\beta^{2}$ for some $\beta \in \pi_{1}\left(  \mathbb{CP}%
^{1}\backslash \left \{  0,1,t,\infty \right \}  ,\lambda_{0}\right)  $. Let
$\rho \left(  \gamma_{1}\right)  =N_{1}$ and $\tilde{\rho}\left(  \beta \right)
=M_{0}$. Then%
\begin{align}
Y\left(  z\right)  N_{1} &  =\gamma_{1}^{\ast}Y\left(  z\right)  =\left(
\beta^{2}\right)  ^{\ast}\left(  {\psi(x)}\hat{Y}\left(  x\right)  \right)
\label{1}\\
&  =C\left(  \beta^{2}\right)  {\psi(x)}\hat{Y}\left(  x\right)  M_{0}%
^{2}\nonumber \\
&  =Y\left(  z\right)  C\left(  \beta^{2}\right)  M_{0}^{2}\nonumber
\end{align}
for some constant $C\left(  \beta^{2}\right)  \in \mathbb{C}$ which comes from
the analytic continuation of ${\psi(x)}$ along $\beta^{2}$. From (\ref{1}), we
see that $N_{1}=C\left(  \beta^{2}\right)  M_{0}^{2}$. By the same argument,
we know that any element $N\in \mathcal{N}$ could be written as
\begin{equation}
N=CM_{1}M_{2}\label{2}%
\end{equation}
for some $M_{i}\in \mathcal{M}$, $i=1,2$ and some constant $C\in \mathbb{C}$
coming from the gauge transformation ${\psi(x)}$. In general, $\mathcal{N}$ is
not a subgroup of $\mathcal{M}$ because of ${\psi(x)}$. By (\ref{2}), the
isomonodromic deformation of (\ref{90}) implies the isomonodromic deformation
of (\ref{89}). However, it is not clear to see whether the converse assertion
is true or not from (\ref{2}). Here we can give a confirmative answer. In
fact, by (\ref{125}), (\ref{126}) and (\ref{67}), we have (\ref{103}) holds.
Since $n_{i}\not \in \frac{1}{2}+\mathbb{Z}$ for $i\in \{0,1,2,3\}$, we have
$\theta_{0},\theta_{1},\theta_{t},\theta_{\infty}\notin \mathbb{Z}$, i.e.,
non-resonant. Then as a consequence of Theorems \ref{thm1}, \ref{theorem1-2}
and C, we have

\begin{corollary}
\label{corcor}Suppose $n_{i}\not \in \frac{1}{2}+\mathbb{Z}$ for $i=0,1,2,3$.
If the generalized Lam\'{e} equation (\ref{89}) in $E_{\tau}$ preverses the
monodromy, then so does the corresponding Fuchsian equation (\ref{90}) on
$\mathbb{CP}^{1},$ and vice versa.
\end{corollary}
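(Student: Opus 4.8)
The plan is to prove the corollary by routing both monodromy-preservation statements through a common characterization in terms of Painlev\'{e} VI, rather than attempting to transport monodromy data directly across the double cover $x=\frac{\wp(z)-e_1}{e_2-e_1}$ as in (\ref{2}). The correspondence of Theorem \ref{thm1} fixes the dictionary between the data $(\tau,\pm p,A,B,n_i)$ of the generalized Lam\'{e} equation (\ref{89}) and the data $(t,\lambda,\mu,K,\theta_\bullet)$ of the Fuchsian equation (\ref{90}); in particular (\ref{138}) is exactly the transformation (\ref{tr}), and (\ref{103}) together with (\ref{125})--(\ref{126}) and (\ref{67}) shows that the two sides carry the \emph{same} Painlev\'{e} VI parameters (this is the matching noted just before the corollary). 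The non-resonant hypothesis $n_i\not\in\frac12+\mathbb{Z}$ translates, via (\ref{103}), into $\theta_0,\theta_1,\theta_t,\theta_\infty\not\in\mathbb{Z}$, so Theorem C applies.

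For the forward implication, suppose (\ref{89}) preserves the monodromy as $\tau$ deforms. Since $p$ is an apparent singularity, Theorem \ref{theorem1-2} gives that $p(\tau)$ is a solution of the elliptic form (\ref{124}) with $\alpha_k=\frac12(n_k+\frac12)^2$. By the classical equivalence recorded via (\ref{tr}) between the elliptic form and (\ref{46}), the function $\lambda(t)$ defined by (\ref{138}) then solves Painlev\'{e} VI (\ref{46}) with parameters (\ref{67}). Finally Theorem C, whose non-resonant hypothesis holds by the preceding paragraph, yields that the corresponding Fuchsian equation (\ref{90}) preserves the monodromy as $t$ deforms. The converse is obtained by reading the same chain backwards, and this is precisely the step that the naive covering argument (\ref{2}) could not supply: if (\ref{90}) preserves the monodromy as $t$ deforms, then Theorem C shows $\lambda(t)$ solves (\ref{46}) with parameters (\ref{67}); via (\ref{tr}) this is equivalent to $p(\tau)$ solving the elliptic form (\ref{124}) with $\alpha_k=\frac12(n_k+\frac12)^2$; and Theorem \ref{theorem1-2} then produces $A(\tau),B(\tau)$ for which (\ref{89}) preserves the monodromy, these being exactly the $A$ recovered from (\ref{104}) and the $B$ determined by (\ref{101}) under the dictionary of Theorem \ref{thm1}.

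The main point requiring care is the compatibility of the two deformation parameters and of the static correspondence with deformation. On the Lam\'{e} side the deformation variable is $\tau$, while on the Fuchsian side it is $t$; these are tied by $t=t(\tau)=\frac{e_3-e_1}{e_2-e_1}$, and $t'(\tau)\not=0$ by (\ref{II-134}) (see also Lemma \ref{lemII-4}), so $\tau$ and $t$ are locally interchangeable and constancy of the monodromy representation in one parameter is constancy in the other. One must also check that the correspondence of Theorem \ref{thm1}, stated for fixed moduli, varies holomorphically in the parameter and respects the apparent-singularity relations (\ref{104}) and (\ref{101}) as the parameter moves, so that a monodromy-preserving family on one side is carried to a monodromy-preserving family on the other. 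I expect this compatibility to be the only genuine obstacle: once it is in place, the corollary is the concatenation of the three equivalences (Theorem \ref{theorem1-2}, the classical passage (\ref{tr}), and Theorem C), with the parameter identification supplied by Theorem \ref{thm1}.
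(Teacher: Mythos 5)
Your proposal is correct and follows essentially the same route as the paper: the authors also deduce the corollary by combining Theorem \ref{thm1} (the dictionary between (\ref{89}) and (\ref{90})), Theorem \ref{theorem1-2} (Lam\'{e} isomonodromy $\Leftrightarrow$ elliptic form (\ref{124})), the lift (\ref{tr}) between (\ref{124}) and Painlev\'{e} VI, and Theorem C, after checking that $n_i\not\in\frac12+\mathbb{Z}$ gives $\theta_0,\theta_1,\theta_t,\theta_\infty\not\in\mathbb{Z}$ via (\ref{103}). Your extra remarks on $t'(\tau)\neq 0$ and the failure of the naive covering argument (\ref{2}) for the converse match the paper's own discussion preceding the corollary.
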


\begin{proof}
[Proof of Theorem \ref{thm1}]{ Let us first consider the generalized Lam\'{e}
equation (\ref{89}). By applying
\[
x=\frac{\wp \left(  z\right)  -e_{1}}{e_{2}-e_{1}},\text{ \ }t=\frac
{e_{3}-e_{1}}{e_{2}-e_{1}},\text{ \ }\lambda=\frac{\wp \left(  p\right)
-e_{1}}{e_{2}-e_{1}},
\]
and the addition formula%
\begin{equation}
\wp \left(  z+p\right)  +\wp \left(  z-p\right)  =\frac{\wp^{\prime}\left(
z\right)  ^{2}+\wp^{\prime}\left(  p\right)  ^{2}}{2\left(  \wp \left(
z\right)  -\wp \left(  p\right)  \right)  ^{2}}-2\wp \left(  z\right)
-2\wp \left(  p\right)  , \label{362}%
\end{equation}
the equation (\ref{89}) becomes the following second order Fuchsian equation
defined on $\mathbb{CP}^{1}$:%
\begin{equation}
y^{\prime \prime}(x)+\frac{1}{2}\frac{\mathfrak{p}^{\prime}\left(  x\right)
}{\mathfrak{p}\left(  x\right)  }y^{\prime}(x)-\frac{q(x)}{\mathfrak{p}\left(
x\right)  }y(x)=0, \label{106}%
\end{equation}
where $\mathfrak{p}(x)$ is defined in (\ref{123}), $b\doteqdot e_{2}-e_{1}$
and}%
\[
{\small q(x)=\left[
\begin{array}
[c]{l}%
n_{0}\left(  n_{0}+1\right)  \left(  x+\frac{e_{1}}{b}\right)  +\frac
{n_{1}\left(  n_{1}+1\right)  }{2}\left(  \frac{\mathfrak{p}\left(  x\right)
}{2x^{2}}-2x+\frac{4e_{1}}{b}\right)  +\frac{B}{b}\\
+\frac{n_{2}\left(  n_{2}+1\right)  }{2}\left(  \frac{\mathfrak{p}\left(
x\right)  }{2\left(  x-1\right)  ^{2}}-2x+\frac{2e_{3}}{b}\right)
+\frac{n_{3}\left(  n_{3}+1\right)  }{2}\left(  \frac{\mathfrak{p}\left(
x\right)  }{2\left(  x-t\right)  ^{2}}-2x+\frac{2e_{2}}{b}\right) \\
+\frac{3}{4}\left(  \frac{\mathfrak{p}\left(  x\right)  +\mathfrak{p}\left(
\lambda \right)  }{2\left(  x-\lambda \right)  ^{2}}-2x-\frac{2}{b}(\wp \left(
p\right)  +e_{1})\right)  +A\left(  \frac{2}{b}\zeta \left(  p\right)
-\frac{\wp^{\prime}\left(  p\right)  }{b^{2}\left(  x-\lambda \right)
}\right)
\end{array}
\right]  .}%
\]
{Since $p$ is an apparent singularity of (\ref{89}), equation (\ref{106}) has
no logarithmic solutions at $\lambda$. The Riemann scheme of (\ref{106}) is as
follows%
\begin{equation}
\left(
\begin{array}
[c]{ccccc}%
0 & 1 & t & \infty & \lambda \\
-\frac{n_{1}}{2} & -\frac{n_{2}}{2} & -\frac{n_{3}}{2} & -\frac{n_{0}}{2} &
-\frac{1}{2}\\
\frac{n_{1}+1}{2} & \frac{n_{2}+1}{2} & \frac{n_{3}+1}{2} & \frac{n_{0}+1}{2}
& \frac{3}{2}%
\end{array}
\right)  . \label{107}%
\end{equation}
Now consider a gauge transformation $y\left(  x\right)  =\left(
x-\lambda \right)  ^{-\frac{1}{2}}x^{-\frac{n_{1}}{2}}(x-1)^{-\frac{n_{2}}{2}%
}(x-t)^{-\frac{n_{3}}{2}}\hat{y}\left(  x\right)  $. Then the Riemann scheme
for $\hat{y}\left(  x\right)  $ is
\begin{equation}
\left(
\begin{array}
[c]{ccccc}%
0 & 1 & t & \infty & \lambda \\
0 & 0 & 0 & \hat{\alpha} & 0\\
n_{1}+\frac{1}{2} & n_{2}+\frac{1}{2} & n_{3}+\frac{1}{2} & \hat{\alpha}%
+n_{0}+\frac{1}{2} & 2
\end{array}
\right)  , \label{108}%
\end{equation}
where }$\hat{\alpha}${$=-\frac{1}{2}\left(  1+n_{0}+n_{1}+n_{2}+n_{3}\right)
$. Moreover, $\hat{y}\left(  x\right)  $ satisfies the second order Fuchsian
equation%
\begin{equation}
\hat{y}^{\prime \prime}\left(  x\right)  +\hat{p}_{1}\left(  x,t\right)
\hat{y}^{\prime}\left(  x\right)  +\hat{p}_{2}\left(  x,t\right)  \hat
{y}\left(  x\right)  =0, \label{109}%
\end{equation}
where%
\begin{equation}
\hat{p}_{1}\left(  x,t\right)  =\frac{\frac{1}{2}-n_{1}}{x}+\frac{\frac{1}%
{2}-n_{2}}{x-1}+\frac{\frac{1}{2}-n_{3}}{x-t}-\frac{1}{x-\lambda} \label{110}%
\end{equation}
and{\allowdisplaybreaks%
\begin{align}
&  \hat{p}_{2}\left(  x,t\right)  =\frac{3}{4\left(  x-\lambda \right)  ^{2}%
}\label{111}\\
&  +\frac{2n_{1}n_{2}-n_{1}-n_{2}}{4x\left(  x-1\right)  }+\frac{2n_{2}%
n_{3}-n_{2}-n_{3}}{4\left(  x-1\right)  \left(  x-t\right)  }+\frac
{2n_{1}n_{3}-n_{1}-n_{3}}{4x\left(  x-t\right)  }\nonumber \\
&  +\frac{2n_{1}-1}{4x\left(  x-1\right)  }+\frac{2n_{2}-1}{4\left(
x-1\right)  \left(  x-\lambda \right)  }+\frac{2n_{3}-1}{4\left(  x-t\right)
\left(  x-\lambda \right)  }\nonumber \\
&  -\frac{1}{\mathfrak{p}\left(  x\right)  }\left[
\begin{array}
[c]{l}%
n_{0}\left(  n_{0}+1\right)  \left(  x+\frac{e_{1}}{b}\right)  -n_{1}\left(
n_{1}+1\right)  \left(  x+\frac{2e_{1}}{b}\right) \\
-n_{2}\left(  n_{2}+1\right)  \left(  x-\frac{e_{3}}{b}\right)  -n_{3}\left(
n_{3}+1\right)  \left(  x-\frac{e_{2}}{b}\right) \\
+\frac{3}{4}\left(  \frac{\mathfrak{p}\left(  x\right)  +\mathfrak{p}\left(
\lambda \right)  }{2\left(  x-\lambda \right)  ^{2}}-2x-2\frac{\wp \left(
p\right)  +e_{1}}{b}\right) \\
+A\left(  \frac{2}{b}\zeta \left(  p\right)  -\frac{\wp^{\prime}\left(
p\right)  }{b^{2}\left(  x-\lambda \right)  }\right)  +\frac{B}{b}%
\end{array}
\right]  .\nonumber
\end{align}
}Since $p\not \in E_{\tau}\left[  2\right]  $ and equation (\ref{106}) has no
logarithmic solutions at $\lambda$, it follows that $\lambda \not \in \left \{
0,1,t,\infty \right \}  $ and $\lambda$ is an apparent singularity of
(\ref{109}). Thus, $\hat{p}_{2}\left(  x,t\right)  $ can be written into the
form of (\ref{97}) with%
\begin{equation}
\hat{\kappa}=-\frac{1}{4}\left(  n_{0}-n_{1}-n_{2}-n_{3}\right)  \left(
1+n_{0}+n_{1}+n_{2}+n_{3}\right)  , \label{114}%
\end{equation}
{\allowdisplaybreaks%
\begin{align}
\mu &  =\underset{x=\lambda}{\text{ Res }}\hat{p}_{2}\left(  x,t\right)
\label{112}\\
&  =\frac{2n_{3}-1}{4\left(  \lambda-t\right)  }+\frac{2n_{2}-1}{4\left(
\lambda-1\right)  }+\frac{2n_{1}-1}{4\lambda}+\frac{3}{8}\frac{\mathfrak{p}%
^{\prime}\left(  \lambda \right)  }{\mathfrak{p}\left(  \lambda \right)  }%
+\frac{A\wp^{\prime}\left(  p\right)  }{b^{2}\mathfrak{p}\left(
\lambda \right)  },\nonumber
\end{align}
}%
\[
K=-\underset{x=t}{\text{Res}}\text{ }\hat{p}_{2}\left(  x,t\right)  =\tilde
{K},
\]
where{\allowdisplaybreaks%
\begin{align}
\tilde{K}  &  \doteqdot-\frac{2n_{2}n_{3}-n_{2}-n_{3}}{4\left(  t-1\right)
}-\frac{2n_{1}n_{3}-n_{1}-n_{3}}{4t}-\frac{2n_{3}-1}{4\left(  t-\lambda
\right)  }\label{113}\\
&  +\frac{1}{4t\left(  t-1\right)  }\left[
\begin{array}
[c]{l}%
\frac{3}{2}\frac{\lambda \left(  \lambda-1\right)  }{\left(  \lambda-t\right)
}-\frac{3}{2}\frac{\wp \left(  p\right)  +e_{3}}{b}+\frac{A\wp^{\prime}\left(
p\right)  }{\left(  \lambda-t\right)  b^{2}}\\
+\frac{n_{0}\left(  n_{0}+1\right)  e_{3}}{b}+\frac{n_{1}\left(
n_{1}+1\right)  e_{2}}{b}+\frac{n_{2}\left(  n_{2}+1\right)  e_{1}}{b}\\
-\frac{2n_{3}\left(  n_{3}+1\right)  e_{3}}{b}+\frac{2}{b}A\wp \left(
p\right)  +\frac{B}{b}%
\end{array}
\right]  .\nonumber
\end{align}
}Since{ $\lambda$ is an apparent singularity,} we conclude from (\ref{98})
that%
\begin{equation}
\tilde{K}=\frac{1}{t\left(  t-1\right)  }\left \{
\begin{array}
[c]{l}%
\lambda \left(  \lambda-1\right)  \left(  \lambda-t\right)  \mu^{2}+\hat
{\kappa}\left(  \lambda-t\right) \\
-\left[
\begin{array}
[c]{l}%
\theta_{0}\left(  \lambda-1\right)  \left(  \lambda-t\right)  +\theta
_{1}\lambda \left(  \lambda-t\right) \\
+\left(  \theta_{t}-1\right)  \lambda \left(  \lambda-1\right)
\end{array}
\right]  \mu
\end{array}
\right \}  . \label{113-0}%
\end{equation}
}

{ Conversely, for a given second order Fuchsian equation (\ref{90}) satisfying
(\ref{91}) and (\ref{137}), we know that $p_{1}(x)$, $p_{2}(x)$ and $K$ are
given by (\ref{96})-(\ref{98}), where%
\begin{equation}
\hat{\kappa}=\hat{\alpha}\left(  \hat{\alpha}+\theta_{\infty}\right)  .
\label{116}%
\end{equation}
Define }$\pm${$p$, $n_{i}$ ($i=0,1,2,3$), $A$, and $B$ by
\begin{equation}
\lambda=\frac{\wp \left(  p\right)  -e_{1}}{e_{2}-e_{1}}, \label{117}%
\end{equation}%
\begin{equation}
\left(  \theta_{0},\theta_{1},\theta_{t},\theta_{\infty}\right)  =\left(
n_{1}+\frac{1}{2},n_{2}+\frac{1}{2},n_{3}+\frac{1}{2},n_{0}+\frac{1}%
{2}\right)  , \label{115}%
\end{equation}%
\begin{equation}
\mu=\frac{2n_{3}-1}{4\left(  \lambda-t\right)  }+\frac{2n_{2}-1}{4\left(
\lambda-1\right)  }+\frac{2n_{1}-1}{4\lambda}+\frac{3}{8}\frac{\mathfrak{p}%
^{\prime}\left(  \lambda \right)  }{\mathfrak{p}\left(  \lambda \right)  }%
+\frac{A\wp^{\prime}\left(  p\right)  }{b^{2}\mathfrak{p}\left(
\lambda \right)  }, \label{118}%
\end{equation}
and%
\begin{equation}
B=A^{2}-\zeta \left(  2p\right)  A-\frac{3}{4}\wp \left(  2p\right)  -\sum
_{i=0}^{3}n_{i}\left(  n_{i}+1\right)  \wp \left(  p+\frac{\omega_{i}}%
{2}\right)  . \label{119}%
\end{equation}
Since $\lambda \not \in \{0,1,t,\infty \}$, $p\not \in E_{\tau}\left[  2\right]
$. Thus $\wp^{\prime}\left(  p\right)  \not =0$ and $A$ is well-defined by
(\ref{118}). In order to obtain the corresponding generalized Lam\'{e}
equation (\ref{89}), it suffices to prove that $p_{1}\left(  x,t\right)  $ and
$p_{2}\left(  x,t\right)  $ can be expressed in the form of (\ref{110}) and
(\ref{111}). By (\ref{96}) and (\ref{115}), {it is easy to see that}
$p_{1}\left(  x,t\right)  $ is of the form (\ref{110}). By (\ref{113}) and
(\ref{113-0}), we see that $K$ can be written into (\ref{113}), so
$p_{2}\left(  x,t\right)  $ can also be expressed in the form of (\ref{111}).
Finally, the assertion that }$p$ is an apparent singularity follows from the
assumption that {$\lambda$ is an apparent singularity of (\ref{90}) (or
follows from (\ref{119}) and Lemma \ref{lem-apparent}). This completes the
proof.}
\end{proof}

\end{document}